	\def\MR#1{}
\newcounter{environmentcounteralphabetic}
\numberwithin{environmentcounter}{section}
\newaliascnt{definitioncounteralias}{environmentcounter}
\newaliascnt{remarkcounteralias}{environmentcounter}
\newaliascnt{examplecounteralias}{environmentcounter}
\newaliascnt{constructioncounteralias}{environmentcounter}
\newaliascnt{lemmacounteralias}{environmentcounter}
\newaliascnt{propositioncounteralias}{environmentcounter}
\newaliascnt{corollarycounteralias}{environmentcounter}
\newaliascnt{theoremcounteralias}{environmentcounter}
\newaliascnt{questioncounteralias}{environmentcounter}
\newaliascnt{conjecturecounteralias}{environmentcounter}
\theoremstyle{definition}
\newtheorem{definition}[definitioncounteralias]{Definition}
\newtheorem{construction}[constructioncounteralias]{Construction}
\newtheorem{notation}[definitioncounteralias]{Notation}
\theoremstyle{plain}
\newtheorem{lemma}[lemmacounteralias]{Lemma}
\newtheorem{proposition}[propositioncounteralias]{Proposition}
\newtheorem{corollary}[corollarycounteralias]{Corollary}
\newtheorem{theorem}[theoremcounteralias]{Theorem}
\newtheorem{theoremalphabetic}[environmentcounteralphabetic]{Theorem}
\newtheorem{corollaryalphabetic}[environmentcounteralphabetic]{Corollary}
\theoremstyle{remark}
\newtheorem{remark}[remarkcounteralias]{Remark}
\newtheorem{example}[examplecounteralias]{Example}
\def\namedlabel#1#2{\begingroup
	#2%
	\def\@currentlabel{#2}%
	\phantomsection\label{#1}\endgroup
}
\newcommand{\R}{\mathbb R}
\newcommand{\Z}{\mathbb Z}
\newcommand{\fS}{\mathfrak S}
\newcommand{\Hom}{\textup{Hom}}
\newcommand{\catfam}{\textbf{CatFam}}
\newcommand{\asm}{\textbf{Asm}}
\newcommand{\wasm}{\textbf{wAsm}}
\newcommand{\aut}{\textup{Aut}}
\newcommand{\id}{\on{id}}
\newcommand{\im}{\on{Im}}
\newcommand{\ob}{\on{ob}}
\newcommand{\Spc}{\textup{Spc}}
\newcommand{\hoSpc}{\textup{hoSpc}}
\newcommand{\hoSp}{\textup{hoSp}}
\newcommand{\Sp}{\textup{Sp}}
\DeclareMathOperator{\Pt}{Pt}
\DeclareMathOperator{\vol}{vol}
\newcommand{\sma}{\wedge}
\newcommand{\op}{\text{op}}
\newcommand{\on}[1]{\operatorname{#1}}
\newcommand{\cal}[1]{\mathcal{#1}}
\newcommand{\bb}[1]{\mathbb{#1}}
\title{Scissors Congruence of the Line and the Regulator}
\author{Ezekiel Lemann}
\begin{document}
\begin{abstract}
We construct explicit generators for the higher scissors congruence $K$-theory of the line. We use this to derive an explicit generating set for the homology of the group of interval exchange transformations. Our proof makes use of an extended version of the regulator (trace) map of Bohmann, Gerhardt, Malkiewich, Merling, and Zahkarevich. 
\end{abstract}

\maketitle
\tableofcontents
\section{Introduction}
 Given two polyhedra $P$ and $Q$, is it possible to cut $P$ into finitely many polyhedral pieces in such a way that these pieces can be reassembled to yield $Q$? If the answer to this question is yes, we say that $P$ and $Q$ are scissors congruent. The classical work of Dehn, Sydler, and Jessen derives algebraic invariants which determine polyhedra in three dimensional Euclidean geometry up to scissors congruence \cite{Deh01,Syd65, Jes72}.  
 \par Later work of Sah and Dupont furthered the use of algebra in understanding scissors congruence problems. In any geometry one can define a scissors congruence group where the elements are polytopes and any two elements are equal if and only if the polytopes are scissors congruent. Sah and Dupont showed that the scissors congruence groups can be expressed in terms of the homology of the isometry group of the geometry, considered as a discrete group \cite{ds90, Dup01}.  This allows for tools from homological algebra to be used in studying scissors congruence problems.

\par  More recently, Zakharevich \cite{zak16}, develops the categorical framework of assemblers which allows for the  study of scissors congruence as a form of $K$-theory. Calculating $K_0$ corresponds to the geometric problem of determining invariants that distinguish polytopes up to scissors congruence. For an assembler $\cal{A}$, the higher $K$-groups can be defined as the homotopy groups of an associated $K$-theory space. One way to construct this space is by group completion. From an assembler $\cal{A}$ one builds a scissors congruence groupoid $\cal{G}(\cal{A})$; whose morphisms encode scissors congruences. The scissors congruence groupoid $\cal{G}(\cal{A})$ also carries symmetric monoidal structure and group completion with respect to this structure yields the $K$-theory space $K(\cal{A}) = \Omega B^{\sqcup} (B \cal{G}(\cal{A}))$. 
 \par A natural project is then to understand the higher $K$-groups.  
 By $\cal{E}^1_T$, we denote the assembler which encodes the data of 1-dimensional translational scissors congruence. An automorphism in the scissors congruence groupoid $\cal{G}(\cal{E}_T^1)$ is an interval exchange transformation. 
 In fact, the automorphism group of an interval $\aut_{\cal{G}(\cal{E}_T^1)} [0,r]$, considered as an object in the scissors congruence groupoid $\cal{G}(\cal{E}_T^1)$,  is isomorphic to  $IET$, the group of interval exchange transformations \cite{MR609891}. 
 \par Two intervals are scissors congruent if and only if they have the same length, therefore $K_0(\cal{E}^T_1) \cong \bb{R}$. However, 1-dimensional geometry has non-trivial higher $K$-groups. Our main result constructs generating sets for these groups.  To this end, we construct sub-complexes of the classifying space $B\cal{G}(\cal{E}_T^1)$ which correspond to the generators of the $K$-groups $K_n(\cal{E}_T^1) \cong \pi_n \Omega B (B^{\amalg} \cal{G}(\cal{E}_T^1))$. 
 
 \par A special case of the main theorem of \cite{Mal24} shows that 
 \begin{theorem}\cite{Mal24}
     \[K_n (\cal{E}_T^1) \cong H_{n+1} (\bb{R}; \bb{Z}) \cong \bb{R}^{\sma_{\bb{Q}} (n+1)}. \]
 \end{theorem}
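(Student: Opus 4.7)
The plan has two essentially independent parts: identifying $K_n(\mathcal{E}_T^1)$ with a homology group of a discrete group, and then computing that homology.

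For the first isomorphism, the plan is to invoke the main theorem of \cite{Mal24} directly. That theorem identifies the scissors congruence $K$-theory of a ``sufficiently homogeneous'' assembler with the group homology of the stabilizer of a generic point, shifted by one. In the case of $\mathcal{E}_T^1$, the translation group acting on the line is $\mathbb{R}$ (as a discrete abelian group), and the stabilizer of a point inside an interval is trivial, so the relevant invariant reduces to $H_{n+1}(\mathbb{R};\mathbb{Z})$. The only genuinely work-like step here is to confirm that $\mathcal{E}_T^1$ satisfies the hypotheses of Mal24's theorem (e.g.\ that it is built from a ``nice'' $G$-set of points), which should be essentially immediate from the definition of the translational assembler on $[0,r]$. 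I would treat this step largely as a black-box citation.

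For the second isomorphism, the plan is to use the classical computation of the group homology of a $\mathbb{Q}$-vector space viewed as a discrete abelian group. Since $\mathbb{R}$ is torsion-free and divisible, the bar complex of $\mathbb{R}$ has no integral torsion and $H_*(\mathbb{R};\mathbb{Z}) = H_*(\mathbb{R};\mathbb{Q})$. The Pontryagin product on $H_*(B\mathbb{R};\mathbb{Q})$ makes it a graded-commutative, primitively generated Hopf algebra over $\mathbb{Q}$ whose primitives in degree $1$ are exactly $\mathbb{R}$. By the Milnor--Moore theorem (or an explicit Eilenberg--Zilber style shuffle calculation), such a Hopf algebra is the free graded-commutative algebra on its primitives, i.e.\ the exterior algebra $\bigwedge^{\ast}_{\mathbb{Q}} \mathbb{R}$. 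Taking the degree $n+1$ component yields the stated formula.

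The main obstacle is conceptual rather than technical: it is checking that the abstract machinery of \cite{Mal24} specializes correctly in the $1$-dimensional translational setting so that the degree shift is exactly by one and the relevant discrete group is $\mathbb{R}$ itself (as opposed to, say, $IET$ or a semidirect product). Once that identification is set up correctly, both the $K$-theoretic and the purely algebraic halves are standard. I would therefore spend most of the written proof on unpacking the specialization of Mal24 and only briefly recall the exterior algebra computation.
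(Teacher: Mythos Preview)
Your plan is essentially correct and matches the paper's treatment: in the introduction the statement is literally recorded as ``a special case of the main theorem of \cite{Mal24}'', so citing that result as a black box is exactly what the paper does, and your exterior-algebra computation of $H_*(\mathbb{R};\mathbb{Z})$ via the Pontryagin product is the same one the paper invokes (citing Weibel and Dupont).

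One point of comparison worth noting: when the paper later unpacks this citation in Section~4, it does \emph{not} phrase the specialization of \cite{Mal24} the way you do (via ``the stabilizer of a generic point is trivial, hence coefficients $\mathbb{Z}$''). Instead it quotes \cite[Theorem 1.5]{Mal24} as giving $K_i(\mathcal{E}^1_T)\cong H_{*}(T_1;\Pt(E^1))$ with coefficients in the polytope module, and then passes to $\mathbb{Z}$-coefficients by a separate argument: the short exact sequence $0\to\Pt(E^1)\to\mathbb{Z}[\mathbb{R}]\to\mathbb{Z}\to 0$ together with Shapiro's lemma forces the connecting map $H_{i+1}(T_1;\mathbb{Z})\to H_i(T_1;\Pt(E^1))$ to be an isomorphism. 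Your ``trivial stabilizer'' heuristic is morally the same content, but if you actually write out the specialization you should be prepared to route through $\Pt(E^1)$ rather than landing on $\mathbb{Z}$ coefficients in one step, since that is how the Thom-spectrum / Tits-building formulation of \cite{Mal24} is stated.
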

 Using the isomorphism of the previous theorem, we establish the following results. 
 \begin{theoremalphabetic}\label{Kgen}
     Let $\{\Phi_i\}_{i=0} ^n=X \subset \bb{R}$. The associated generator $\Phi_0 \sma \dots \sma \Phi_n \in K_n (\cal{E}^1_T)$ is realized by a sub-complex $C_X= B(\langle \rho_{i} | 1 \leq i \leq n \rangle )$, where $\rho_{i}$ is the interval exchange transformation corresponding to rotating a circle with circumference $\Phi_0 + \cdots + \Phi_n$ clockwise by $\Phi_i$. 
 \end{theoremalphabetic}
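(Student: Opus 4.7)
The plan is to identify the sub-complex $C_X$ as the classifying space of a free abelian group of rank $n$, and then to compute the image of its fundamental class under the composite
\[ H_n(B\cal{G}(\cal{E}^1_T);\bb{Z}) \longrightarrow \pi_n K(\cal{E}^1_T) \xrightarrow{\;\sim\;} H_{n+1}(\bb{R};\bb{Z}), \]
the last arrow being the extended regulator of \cite{Mal24}.

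First I would check that $\langle \rho_1, \ldots, \rho_n \rangle$ is free abelian of rank $n$. All the $\rho_i$ are rotations of the same circle of circumference $L = \Phi_0 + \cdots + \Phi_n$, so they pairwise commute, and any nontrivial relation would correspond to a $\bb{Z}$-linear dependence among $\Phi_1, \ldots, \Phi_n$ modulo $L\bb{Z}$. Nonvanishing of $\Phi_0 \sma \cdots \sma \Phi_n$ in $\bb{R}^{\sma_{\bb{Q}} (n+1)}$ forces $\bb{Q}$-linear independence of $\{\Phi_0, \ldots, \Phi_n\}$ and rules this out, so $C_X$ is homotopy equivalent to the $n$-torus and has a canonical fundamental class $[C_X] \in H_n(C_X; \bb{Z})$. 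Pushing forward along the inclusion $C_X \hookrightarrow B\cal{G}(\cal{E}^1_T)$ and the group completion map to $K(\cal{E}^1_T)$, and then applying Hurewicz together with the infinite loop space structure, $[C_X]$ lifts to an element $\alpha \in K_n(\cal{E}^1_T)$ expressible as the $n$-fold Pontryagin product $[\rho_1] \cdots [\rho_n]$ of the one-dimensional classes of the generators.

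The heart of the argument is then to compute the regulator applied to $\alpha$. Each rotation $\rho_i$ decomposes as an IET into two pieces with translation lengths $\Phi_i$ and $\Phi_i - L$, and evaluating the regulator on a single generator class $[\rho_i]$ should produce a wedge involving $L$ and $\Phi_i$. The Pontryagin product then multiplicatively combines these contributions, yielding, after cancellation of repeated wedge factors by antisymmetry, an expression of the form $L \sma \Phi_1 \sma \cdots \sma \Phi_n$. Finally, the relation $L = \Phi_0 + \Phi_1 + \cdots + \Phi_n$ together with antisymmetry of the wedge collapses this to $\Phi_0 \sma \Phi_1 \sma \cdots \sma \Phi_n$, as desired.

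The main obstacle I anticipate is this last step: the explicit computation of the regulator on a Pontryagin product of commuting rotations. The first two steps are essentially formal, consisting of the group-theoretic identification of $C_X$ with $B\bb{Z}^n$ and the recognition of its pushforward as a Pontryagin product. What carries the content of the theorem is unpacking the regulator on this specific family of IETs and verifying that the resulting wedge expression, after applying the single relation $\sum \Phi_i = L$, produces precisely the generator indexed by $X$.
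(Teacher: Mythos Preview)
Your high-level outline matches the paper's: identify $C_X$ with an $n$-torus, extract an element of $K_n(\cal{E}^1_T)$ from its fundamental class via the stable splitting of $\Sigma^\infty_+ T^n$, and detect that element using the (injective) volume regulator. The gap is in the core computation, where you propose to treat the class as an $n$-fold Pontryagin product $[\rho_1]\cdots[\rho_n]$ and invoke multiplicativity.

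The Pontryagin product lives in $H_*(\Omega^\infty K)$, not in $\pi_* K$; since $K_*(\cal{E}^1_T)$ is rational, $H_*(\Omega^\infty_0 K)$ is the free graded-commutative algebra on $K_*$, and the product $[\rho_1]\cdots[\rho_n]$ lands in the summand $\Lambda^n(K_1)$, which is \emph{not} the Hurewicz image of $K_n$. So the identification of $\alpha$ with that product is not correct as stated. More seriously, even at the level of $\bb{R}^{\sma\bullet}$ there is no product taking $(L\sma\Phi_1,\ldots,L\sma\Phi_n)$ to $L\sma\Phi_1\sma\cdots\sma\Phi_n$: the exterior-algebra product gives $0$ because $L$ repeats, and ``cancellation of repeated wedge factors by antisymmetry'' is exactly the mechanism that kills the term, not the one that produces it. Your degree count already flags the problem, since a genuine product $K_a\times K_b\to K_{a+b}$ would have to send $\bb{R}^{\sma(a+1)}\times\bb{R}^{\sma(b+1)}$ to $\bb{R}^{\sma(a+b+1)}$.

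The paper avoids this by never leaving the chain level. It writes the fundamental class of $C_X$ as the signed sum over $\sigma\in\fS_n$ of the $n$-simplices $[\rho_{\sigma(1)}|\cdots|\rho_{\sigma(n)}]$, represents each as a viaduct over the common refinement $[0,\Phi]=x_0\sqcup\cdots\sqcup x_n$, and applies the explicit regulator formula term by term. The resulting chain in $C_n(T_1;\bb{R})$ has three kinds of terms: those of the form $[\Phi_{\sigma(1)}|\cdots|\Phi_{\sigma(n)}]\otimes\Phi_0$, those with a single entry $\Phi_{\sigma(j)}$ replaced by $-\Phi_0$, and those with a repeated entry $\Phi_{\sigma(k)}$. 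The last family cancels in pairs under the transposition $(jk)$ in $\fS_n$; the first two assemble into $\sum_j(-1)^j(\Phi_0\sma\cdots\sma\widehat{\Phi}_j\sma\cdots\sma\Phi_n)\otimes\Phi_j$, which the paper has already identified (via the connecting map $\delta$) as the regulator image of $\Phi_0\sma\cdots\sma\Phi_n$. That combinatorial cancellation is the actual content replacing your hoped-for multiplicativity, and it does not factor through a product on $K_*$.
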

In other words, the generators arise from group homomorphisms $\bb{Z}^n \rightarrow IET$ where $\bb{Z}^n$ is identified with a subgroup generated by $n$ rationally independent circle rotations. 
  By the theory developed in \cite{klmms24} we obtain the following corollary. 
 \begin{corollaryalphabetic}\label{cor:IETgen}
     Sub-complexes of the type $C_X$ generate $H_* (IET)$ as a ring. 
  \end{corollaryalphabetic}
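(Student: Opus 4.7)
The plan is to combine \Cref{Kgen} with the comparison between scissors congruence $K$-theory and the homology of automorphism groups developed in \cite{klmms24}. The symmetric monoidal structure on $\cal{G}(\cal{E}_T^1)$ given by disjoint union of intervals equips $\coprod_r BIET_r$ with the structure of a topological monoid, and group completion with respect to it produces the $K$-theory space $K(\cal{E}_T^1)$. Consequently, the Pontryagin ring $H_*(BIET)$, after localization at $\pi_0 = \bb{R}_{\geq 0}$, is identified with $H_*(K(\cal{E}_T^1))$.

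The key input I would take from \cite{klmms24} is that this localization-plus-Hurewicz picture is sharp enough to detect ring generators: any set of classes whose images span $\pi_* K(\cal{E}_T^1)$ lifts to a set of ring generators for the Pontryagin ring $H_*(BIET)$. Rationally this follows from the Milnor--Moore theorem applied to the basepoint component of the infinite loop space $K(\cal{E}_T^1)$, whose rational homology is the free graded-commutative algebra on $\pi_* K(\cal{E}_T^1) \otimes \bb{Q}$; integrally one invokes the trace/regulator refinement of \cite{klmms24}, which is precisely the extended regulator map advertised in the abstract.

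With this machinery in place, \Cref{Kgen} supplies the explicit realization: for each finite subset $X = \{\Phi_0, \dots, \Phi_n\} \subset \bb{R}$, the subcomplex $C_X = B\langle \rho_i \mid 1 \leq i \leq n \rangle \subset BIET$ realizes the generator $\Phi_0 \sma \cdots \sma \Phi_n$ of $K_n(\cal{E}_T^1)$, and as $X$ varies these cover a generating set of $K_*(\cal{E}_T^1)$. Hence the homology classes arising from all such $C_X$ generate $H_*(IET)$ as a ring. The main obstacle I anticipate is verifying the comparison theorem of \cite{klmms24} at the integral (not merely rational) level and tracking how the $\pi_0 = \bb{R}$ localization of $H_*(BIET)$ interacts with ring generation, so that no generators are lost or duplicated in the passage from $K$-theory to $H_*(IET)$.
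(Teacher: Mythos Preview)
Your proposal is essentially the same argument as the paper's. The paper invokes \cite[Corollary 4.5]{klmms24} to identify $H_*(IET)\cong H_*(\Omega_0^\infty K(\cal{E}_T^1))$, then observes that because the $K$-groups $K_n(\cal{E}_T^1)\cong \bb{R}^{\sma_{\bb{Q}}(n+1)}$ are already $\bb{Q}$-vector spaces, $H_*(\Omega_0^\infty K(\cal{E}_T^1))$ is the free graded algebra on $K_*(\cal{E}_T^1)$, whence Theorem~A supplies ring generators.

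Your anticipated ``main obstacle'' is therefore not an obstacle: there is no integral-versus-rational subtlety to track, and no separate regulator refinement is needed, precisely because the homotopy groups of $K(\cal{E}_T^1)$ are uniquely divisible. Once you note this, your Milnor--Moore step is exactly the paper's ``free graded algebra on the $K$-groups'' step, and the rest is Theorem~A.
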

  The calculations involved in Theorems \ref{Kgen} and \ref{cor:IETgen} depend on an alternative description of the regulator, which is called the trace in \cite{bgmmz}. 
  \begin{theoremalphabetic}\label{theorem:regulatorextension}
      The regulator of \cite{bgmmz} has an equivalent description using the scissors congruence groupoid $\cal{G}(\cal{A}_{hG})$. 
  \end{theoremalphabetic}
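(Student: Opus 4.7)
The plan is to produce a zig-zag of equivalences comparing two apparently different maps out of scissors congruence $K$-theory. On one side sits the regulator (trace) of \cite{bgmmz}, built from a cyclic trace structure on the symmetric monoidal scissors congruence groupoid $\cal{G}(\cal{A})$. On the other side sits a map constructed from the scissors congruence groupoid $\cal{G}(\cal{A}_{hG})$ of the homotopy orbit assembler; this is a more economical package, since $\cal{A}_{hG}$ absorbs the $G$-action into the assembler itself, so that its associated scissors congruence groupoid already carries the data needed to define the trace.

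First I would recall the regulator in a form compatible with the group-completion model $K(\cal{A}) = \Omega B^{\sqcup}(B\cal{G}(\cal{A}))$, writing it as a concrete map of spaces induced by a zig-zag of functors between symmetric monoidal groupoids. Next I would construct the candidate map through $\cal{G}(\cal{A}_{hG})$: the canonical projection $\cal{A}_{hG} \to \cal{A}$ is a functor of assemblers and so induces a map on the associated scissors congruence groupoids, while the remaining data in $\cal{A}_{hG}$ assembles into the cyclic/equivariant structure required for the trace. Comparing the two would then reduce to producing a natural transformation between the two resulting composite functors and showing that the induced map on classifying spaces, after applying $\Omega B^{\sqcup}$, is a weak equivalence.

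The main obstacle is this final identification. In practice, constructing an explicit natural isomorphism at the level of symmetric monoidal groupoids requires carefully tracking how the $G$-action is encoded on each side: in \cite{bgmmz} the action appears \emph{externally} as a cyclic structure on a bar construction, whereas in $\cal{G}(\cal{A}_{hG})$ it appears \emph{internally} as extra morphisms of the groupoid. I expect the verification to boil down to a coherence check for a diagram of symmetric monoidal functors that intertwines these two presentations of the action. Once that coherence is established, naturality of $B$ and $\Omega B^{\sqcup}$, together with the fact that group completion preserves the relevant equivalences, will upgrade the groupoid-level comparison to an equivalence of trace maps on $K$-theory.
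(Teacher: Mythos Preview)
Your proposal misreads both the statement and the structure of the regulator in \cite{bgmmz}, and as a result the outlined argument does not make contact with the actual content of the theorem.

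First, the regulator of \cite{bgmmz} is not built from a ``cyclic trace structure'' on $\cal{G}(\cal{A})$; it is an explicit map of simplicial sets $R: N_\bullet \cal{W}(\cal{A}_{hG}) \to N_\bullet^\otimes(\bb{Z},\bb{Z}[G],A)$, given by a concrete formula on tuples of morphisms in the category of covers of the homotopy orbit assembler. There is no cyclic structure to compare, and there is no ``canonical projection $\cal{A}_{hG}\to\cal{A}$'' --- the natural functor goes the other way. So the comparison you sketch, between an external cyclic action and an internal one, is not the comparison at stake.

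Second, and more seriously, you have not identified the actual difficulty. The theorem asks for a map $R^{\cal{G}}: N_\bullet \cal{G}(\cal{A}_{hG}) \to N_\bullet^\otimes(\bb{Z},\bb{Z}[G],A)$ that factors $R$ through the localization $L:\cal{W}(\cal{A}_{hG})\to\cal{G}(\cal{A}_{hG})$. The obstruction is that a $p$-simplex in $N_\bullet\cal{G}(\cal{A}_{hG})$ is a flag of \emph{equivalence classes of spans}, not of morphisms in $\cal{W}(\cal{A}_{hG})$, so one cannot simply apply the formula for $R$. The paper's work goes into showing that every such flag admits a canonical representative --- a \emph{viaduct}, i.e.\ a flag of DMC-spans in which adjacent dissections and covering sub-maps coincide --- and that this representative is unique up to common subdivision. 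Only then can one define $R^{\cal{G}}$ by the same formula as $R$ on the viaduct and check it is well-defined; the extension property $R = R^{\cal{G}}\circ N_\bullet(L)$ is then immediate. Your proposal contains no analogue of this representation/uniqueness step, and without it there is no candidate for $R^{\cal{G}}$ at all.
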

  As in \cite[Example 7.10]{bgmmz}, this allows us to recover a rational equivalence in the case of classical scissors congruence. 
\begin{corollaryalphabetic}\label{Cor:RatEquiv}
      Let $\cal{X}$ be the assembler encoding $n$-dimensional spherical, hyperbolic, or Euclidean scissors congruence. Let $G$ be a subgroup of the isometry group. There is a rational equivalence 
      \[ K_* (\cal{X}_{hG}) \rightarrow H_* (G; \Pt(\cal{X}))\]
      induced by the universal regulator. 
  \end{corollaryalphabetic}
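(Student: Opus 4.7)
The plan is to apply \Cref{theorem:regulatorextension} to reduce the statement to the computation of \cite[Example 7.10]{bgmmz}, now promoted from the classical setting to that of homotopy-orbit assemblers.

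First, I would invoke \Cref{theorem:regulatorextension} to rewrite the universal regulator $K_*(\cal{X}_{hG}) \to H_*(G; \Pt(\cal{X}))$ in terms of the scissors congruence groupoid $\cal{G}(\cal{X}_{hG})$. This replaces the abstract trace with a concrete map obtained by including the point groupoid of $\cal{X}$ into $\cal{G}(\cal{X}_{hG})$ and then group completing, so both source and target are realized as homotopy groups of explicit spaces built from $B\cal{G}(\cal{X}_{hG})$.

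Next, I would use that $\cal{G}(\cal{X}_{hG})$ is a homotopy quotient, so at the level of classifying spaces there is a fibration
\[ B\cal{G}(\cal{X}) \to B\cal{G}(\cal{X}_{hG}) \to BG. \]
The associated Serre spectral sequence has rational $E^2$-page
\[ E^2_{p,q} = H_p\bigl(G;\, H_q(B\cal{G}(\cal{X}); \bb{Q})\bigr) \Rightarrow H_{p+q}\bigl(B\cal{G}(\cal{X}_{hG}); \bb{Q}\bigr). \]
For the classical spherical, hyperbolic, or Euclidean assemblers, Dupont--Sah (in the form used in \cite[Example 7.10]{bgmmz}) identifies the relevant rational homology of $B\cal{G}(\cal{X})$ with $\Pt(\cal{X})$ concentrated in the appropriate degree, collapsing the $E^2$-page rationally onto $H_*(G; \Pt(\cal{X})) \otimes \bb{Q}$ and matching the claimed target.

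The main obstacle is verifying that once the regulator is unfolded through \Cref{theorem:regulatorextension}, it agrees with the edge morphism of this spectral sequence, so that the rational isomorphism at $E^2$ genuinely witnesses a rational equivalence of $K$-theory with group homology. This compatibility is exactly the additional content supplied by the extended description of the regulator in \Cref{theorem:regulatorextension}; with it in hand, the argument of \cite[Example 7.10]{bgmmz} transfers mutatis mutandis to the homotopy-quotient setting, giving the corollary.
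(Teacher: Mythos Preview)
Your proposal has a genuine gap and also misidentifies the key input. The paper's argument is much shorter and works entirely at the spectrum level: by the main theorem of \cite{Mal24}, the truncation map $K(\cal{X}) \to H\Pt(\cal{X})$ is a rational equivalence of spectra; taking homotopy $G$-orbits preserves rational equivalences, and the resulting map $K(\cal{X})_{hG} \to (H\Pt(\cal{X}))_{hG}$ is (by \cite{bgmmz}) the abstract trace, whose source is $K(\cal{X}_{hG})$ and whose target has homotopy groups $H_*(G;\Pt(\cal{X}))$. No spectral sequence is needed, and \cref{theorem:regulatorextension} plays no role in the argument beyond allowing the conclusion to be phrased in the group-completion model.

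Your Serre spectral sequence argument, by contrast, is set up at the space level for $B\cal{G}(\cal{X}_{hG})$, and this is where it breaks down. First, the identification of the fibre homology is wrong: $H_*(B\cal{G}(\cal{X});\bb{Q})$ is the rational homology of a symmetric monoidal groupoid \emph{before} group completion, and there is no reason for it to be $\Pt(\cal{X})$ concentrated in a single degree; what Malkiewich proves is that the \emph{spectrum} $K(\cal{X})$ is rationally an Eilenberg--MacLane spectrum, which is a statement about $\pi_*K(\cal{X})$, not about $H_*(B\cal{G}(\cal{X}))$. Second, even granting a fibration over $BG$, a Serre spectral sequence in space-level homology does not compute $K_*(\cal{X}_{hG})$, which requires group-completing first; you never explain how to pass from $H_*(B\cal{G}(\cal{X}_{hG}))$ to $K_*(\cal{X}_{hG})$. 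If you want a spectral-sequence proof, the correct one is the homotopy-orbit spectral sequence $H_p(G;\pi_q K(\cal{X}))\Rightarrow \pi_{p+q}(K(\cal{X})_{hG})$ at the spectrum level, which collapses rationally precisely because of Malkiewich's theorem---but that is just a repackaging of the paper's two-line argument.
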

   \par Several recent developments in scissors congruence $K$-theory play an important role in this work.  We make use of the regulator (trace) defined by Bohmann, Gerhardt, Malkiewich, Merling, and Zakharevich \cite{bgmmz} and give another construction of the regulator using the group completion model for $K$-theory given in \cite{klmms24}.
The Tits building model for scissors congruence $K$-theory  given by Malkiewich \cite{Mal24} is used in conjunction with  results of Kupers, the author,  Malkiewich, Miller, and Sroka which relates $K$-theory to the scissors automorphism group of a polytope \cite{klmms24}.
\subsection{Notations}
We use the notation $\Spc$ for the category of spaces and $\Sp$ for the category of spectra, decorating with a $*$ if base points are used. We write $\fS_n$ for the permutation group of the set $\{1, \dots , n\}$. For $\sigma \in \fS_n$ we write $(-1)^\sigma$ for the signature of the permutation $\sigma$. When convenient we use the shorthand $[v \mid w ] = [v] \otimes [w]$. We write $I(E^n)$ for the isometry group of Euclidean space $E^n$. The subgroup of $I(E^n)$ consisting of translations is written $T_n$. The $n$-dimensional torus is $T^n$. 
\subsection{Acknowledgments}
  The author thanks Robin Sroka for valuable discussions and Cary Malkiewich for insight, patience, and support. This project was supported by an NSF grant for the Focused Research Group project
Trace Methods and Applications for Cut-and-Paste K-theory DMS-205292.
This paper represents a part of the author's Ph.D. thesis at Binghamton University.
\subsection{Organization}
Section 2 gives background on group completion $K$-theory in the context of weak assemblers. 
Section 3 redevelops the theory of the regulator using the group completion model of $K$-theory, ending with \cref{theorem:regulatorextension}  (\cref{theorem:regulator_extension_proved}) and \cref{Cor:RatEquiv} (\cref{cor:Cor_D_u_reg}). 
Section 4 specializes to the case of 1-dimensional geometry, giving a derivation of the volume regulator. Section 5 concerns constructing generators, giving proofs of  \cref{Kgen} (\cref{complexrecipe} and \cref{prop:torusrepresentsgenerator}) and \cref{cor:IETgen} (\cref{cor:IETgen_proved}). 
Section 6 concludes the paper with a proposition showing how the generators of $H_*(IET)$ give rise to generators $H_*(Rec_n)$, the homology of the group of rectangle exchange transformations \cite{cornulier2022groupsrectangleexchangetransformations}.

\section{A presentation of $\cal{G}(\cal{A})$.} 
In this section we recall the  the category of fractions construction from \cite[Section 2.3]{klmms24} in the slightly more general setting of weak assemblers. 
\subsection{Weak assemblers}\label{sec:def-assemblers}
 The definition of an assembler \cite[Def.2.4]{zak16} involves a Grothendieck topology, but for our purposes this isn't necessary so we will remove this assumption. We need the structure of finite covering families and some additional axioms, which are also used in the definition of an assembler, that allow for the construction of $K$-theory by group completion. 
\begin{definition}\cite[Definition 2.1]{bgmmz} Given a category $\cal{C}$, a \emph{multimorphism} in $\cal{C}$ is a finite set of morphisms in $\cal{C}$ which share a codomain: 
\[\{\phi_i:  a_i \rightarrow b\}_{i \in I}. \] The set $I$ is finite and the case $I = \emptyset$ is permitted. 
    A \emph{covering family structure} on $\cal{C}$ is a collection of multimorphisms of $\cal{C}$, called \emph{covering families}, which satisfy the following axioms: 
    \begin{enumerate}[label=\bf{(C\arabic*)}]
        \item For every object $a \in \cal{C}$, $\{1_a : a \rightarrow a\} $ is a covering family.
        \item Covering families are composable. Given covering families 
        \[\{\phi_i : b_i \rightarrow c\}_{i \in I}, \, \{\psi_{ji}: a_{ji} \rightarrow b_i\}_{j  \in J_i}\] 
       then the composition 
        \[\{\psi_{ji} \circ \phi_i : a_{ji} \rightarrow c\}_{i \in I, j \in J_i}\]
        is a covering family. 
    \end{enumerate}
  A small category $\cal{C}$ with distinguished basepoint $ *\in \cal{C}$ and covering family structure is a \emph{category with covering families} if the following hold: 
  \begin{enumerate}
      \item $\hom(*,*)=\{1_*\}$ and $\hom(A,*)=\emptyset$ for $A \neq *$.
      \item For every finite set $I$, the set $\{ * \rightarrow * \}_{i \in I}$ is a covering family. 
  \end{enumerate}
  Given a category with covering families we use the following terminology.
  \begin{itemize}
      \item We say that the covering family 
      \[ \{\psi_j: b_j \rightarrow a\}_{j \in J}\]
      \emph{refines} the covering family 
      \[\{\phi_i : a_i \rightarrow a\}_{i \in I}\]
      if there exists a function $f: I \rightarrow J$ and a covering family
      \[ \{\eta_j: b_j \rightarrow a_i\}_{j \in f^{-1}(i)}\]
      for every $i \in I$, such that $\phi_{f(j)}\circ\eta_j=\psi_j$. \\
      \item We say that a multimorphism \[ \{a_i \rightarrow a\}_{i \in I}\] is \emph{disjoint} if every pullback $a_i \times_{a} a_j, j \neq i$, exists and is initial. 
  \end{itemize}
       
 Given categories with covering families $\cal{C}$ and $\cal{D}$, a functor $F:\cal{C} \rightarrow \cal{D}$ which preserves distinguished base points and covering family structure is a \emph{morphism of categories with covering families}. 
  \end{definition}

\begin{definition}\label{def:wasm}
 A category  $\cal{A}$ equipped with a collection of finite disjoint covers $\{a_i \rightarrow a\}_{i \in I}$ is called a  \emph{weak assembler} if the following axioms are satisfied. 
\begin{description}
\item[\namedlabel{enum:wasm-id}{(C)}] For evey object $a$ in $\cal{A}$ the identity $\{a = a\}$ is a covering family. Covering families are closed under composition. 
\item[\namedlabel{enum:wasm-initial}{(I)}] $\cal{A}$ has an initial object $\emptyset$. The empty family is a cover of $\varnothing$. Additionally, covers are closed under adding or taking away a copy of $\varnothing$. 
\item[\namedlabel{enum:wasm-refinement}{(R)}] Any two covers of an object $P$ of $\cal{A}$ have a common refinement.
\item[\namedlabel{enum:wasm-mono}{(M)}] Every morphism of $\cal{A}$ is a monomorphism.
\end{description}
\end{definition}
\begin{remark}\label{remark:ForgettingwAsmStructure}
    Forgetting some of the axioms defining a weak assembler yields a category with covering structure. 
\end{remark}

\begin{definition}\label{defsc}
    Two objects $a,b$ of a weak assembler $\cal{A}$ are \emph{scissors congruent} if there are covers $\{a_i \rightarrow a\}_{i \in I}$ and $\{b_i \rightarrow b\}$ such that $a_i \cong b_i$ for every $i \in I$.  
\end{definition}
\begin{definition}
    We define $\wasm$ to be \emph{the category of weak assemblers}. The objects are weak assembler and the morphisms are functors which preserve covering structure and the initial object. 
\end{definition}
\subsection{The category of covers $\cal{W}(\cal{A})$.}
From a weak assembler $\cal{A}$ we build a category $\cal{W}(\cal{A})$ called the category of covers. Morphisms in this category encode scissors congruence.
\begin{definition}
    Given a weak assembler $\cal{A}$ the \emph{category of covers} $\cal{W}(\cal{A})$ is defined as follows: 
    \begin{itemize}
    \item The objects of $\cal{W}(\cal{A})$ are collections of objects from $\cal{A}$ indexed by a finite set. We use the notation $\{P_i \}_{i \in I}$ where $I$ is a finite set and $P_i \in \cal{A}$.
    \item Morphisms $f:\{P_i\}_{i \in I} \rightarrow \{Q_j\}_{j \in J}$ are given by a function $f:I \rightarrow J$ and morphisms from $\cal{A}$,  $f_i: P_i \rightarrow Q_{f(i)}$ such that $\{f_i : P_i \rightarrow Q_j\}_{i \in f^{-1}(j)} $ is a cover. We record morphisms as tuples: $(f, \{f_i\}_{i \in I})$. 
    \item Composition is given by $(f, \{f_i\}_{i \in I}) \circ (g, \{g_j\}_{j \in J})=(fg, \{f_{g(j)} \circ g_j\}_{j \in J})$. 
    \item $\cal{W}(\cal{A})$ has symmetric monoidal structure given by \[ (I, \{P_i\}_{i \in I}) \sqcup (J, \{Q_j\}_{j \in J}) = (I \sqcup J, \{P_i\}_{i \in I} \sqcup \{Q_j\}_{j \in J}).\]
    \end{itemize}
\end{definition}
\begin{remark}
     Singletons  $\{P\}$ and $\{Q\}$ are connected by a zigzag of morphisms in $\cal{W}(\cal{A})$ if and only if $P$ and $Q$ are scissors congruent. 
\end{remark}
\begin{definition}
    The scissors congruence $K$-theory spectrum, $K(\cal{A})$, of an assembler $\cal{A}$ is the symmetric spectrum obtained by applying the Segal $\Gamma$ construction to $\cal{W}(\cal{A})$. 
    \end{definition}
\subsection{The groupoid of scissors congruences $\cal{G}(\cal{A})$}
\par It is known that the localization $\cal{G}(\cal{A})=\cal{W}(\cal{A})[\cal{W}(\cal{A})^{-1}]$, which we call the \emph{scissors congruence groupoid}, satisfies the conditions for a category of fractions where morphisms are represented by spans \cite[Def.2.16]{klmms24}. 
The objects of $\cal{G}(\cal{A})$ are the same as those of $\cal{W}(\cal{A})$, but morphisms are equivalence classes of spans. We will use the presentation of $\cal{G}(\cal{A})$ to give an alternate proof  that the localization functor induces a weak equivalence $B(\cal{W}(\cal{A})) \simeq B(\cal{G}(\cal{A}))$ \cite[Theorem 4.1]{klmms24}.

\par We begin by recalling the category of fractions construction, following Gabriel and Zisman \cite[Ch.I]{gabriel2012calculus}.    

\begin{definition}Given  a subcategory of weak equivalences $W \subseteq \cal{C}$, the localization $\cal{C}[W^{-1}]$ admits a calculus of fractions if the following conditions hold. 

\begin{enumerate}[label= \bf{({CF}\arabic*)}]
\item Ore Condition. Given $f: x \rightarrow z \in W$ and and $g: y \rightarrow z \in \cal{C}$ there exist $h: w \rightarrow x \in \cal{C}$ and $i : w \rightarrow y \in W$ such that the square commutes: 
\[ \begin{tikzcd}
w \ar[r] \ar[d]& x \ar[d]  \\
y \ar[r] & z.
\end{tikzcd}\] 

\item Given \[ \begin{tikzcd} x \ar[r, "f", shift left=.75ex] \ar[r, "g"', shift right = .75ex] & y \ar[r, "h"] & z \end{tikzcd}\] such that $hf=hg$ there exists an object $w$ and morphism 
\[ \begin{tikzcd} w \ar[r, "i", dotted ]& x \ar[r, "f", shift left=.75ex] \ar[r, "g"', shift right = .75ex] & y \ar[r, "h"] & z \end{tikzcd}\] 
such that $fi=gi$.
\end{enumerate}

\par Under these conditions, the morphism sets in $\cal{C}[W^{-1}]$ can be described as spans where the left leg is in $W$, under an equivalence relation: 
\[ \cal{C}[W^{-1}](a,b) = \{a \xleftarrow{f} c \xrightarrow{g} b : f \in W  \}/ \sim .\]

Where we define $a \xleftarrow{f} c \xrightarrow{g} b \sim a \xleftarrow{h} c' \xrightarrow{i} b$ if there exists an object $x$ and morphisms $u,v$ such that the following diagram commutes: 

\[ \begin{tikzcd}
& b \ar[dl , "f"'] \ar[dr, "g"]& \\
a & \ar[u, "u"']x \ar[d, "v"]& c \\
&b' \ar[ul, "h"]\ar[ur, "i"']&
\end{tikzcd}\]
and $fu = hv \in W$.
\par Given spans $(b \leftarrow b' \rightarrow c) $ and $ (a \leftarrow a' \rightarrow )b$ , construct the diagram
\[ \begin{tikzcd}
a  &a' \ar[l] \ar[r] &b \\
& z \ar[u] \ar[r] & b' \ar[u] \ar[d]\\
& & c.
\end{tikzcd}\] 

where $z$ exists by (CF1). Then the composition is defined by  
\[ (b \leftarrow b' \rightarrow c) \circ (a \leftarrow a' \rightarrow b ) = (a \leftarrow a' \leftarrow z \rightarrow b' \rightarrow c ).\]
\par Using the calculus of fractions construction the localization functor $\iota_W:\cal{C} \rightarrow \cal{C}[W^{-1}]$ is defined by 
\[ x \xrightarrow{f} y \mapsto x \xleftarrow{1_x} x \xrightarrow{f} y. \]
\end{definition}

\par In the context of this paper, our interest in categories of  fractions comes from the following. 
\begin{proposition}[{\cite[Lemma 2.15]{klmms24}}]\label{assemblerfractions}
For $\cal{A}$ a weak assembler, $\cal{G}(\cal{A})$ has a representation as a category of fractions . 
\end{proposition}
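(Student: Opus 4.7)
The plan is to verify the two axioms (CF1) and (CF2) of a calculus of fractions for the pair $(\cal{W}(\cal{A}), W)$ with $W = \cal{W}(\cal{A})$, that is, with every morphism designated a weak equivalence. Under this choice the conditions simplify: (CF1) becomes the existence of a completing square for an arbitrary cospan, while (CF2) is automatic once every morphism in $\cal{W}(\cal{A})$ is shown to be a monomorphism.

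For the Ore condition, given morphisms $f\colon \{P_i\}_I \to \{R_k\}_K$ and $g\colon \{Q_j\}_J \to \{R_k\}_K$ in $\cal{W}(\cal{A})$, I would build a completing object componentwise over $K$. For each $k \in K$ the families $\{f_i\colon P_i \to R_k\}_{i \in f^{-1}(k)}$ and $\{g_j\colon Q_j \to R_k\}_{j \in g^{-1}(k)}$ are covers of $R_k$ in $\cal{A}$, so axiom \ref{enum:wasm-refinement} produces a common refinement $\{T_\ell \to R_k\}_{\ell \in L_k}$ carrying with it covers $\{T_\ell \to P_i\}$ of each relevant $P_i$ and $\{T_\ell \to Q_j\}$ of each relevant $Q_j$ compatible with the maps into $R_k$. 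Setting $L = \bigsqcup_{k \in K} L_k$ packages these refinement data into morphisms $\{T_\ell\}_L \to \{P_i\}_I$ and $\{T_\ell\}_L \to \{Q_j\}_J$ of $\cal{W}(\cal{A})$, and the commuting square over $\{R_k\}_K$ is automatic from the definition of refinement.

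For (CF2), the plan is to prove that every morphism $h\colon \{Q_j\}_J \to \{R_k\}_K$ is a monomorphism. Suppose $hf = hg$ for parallel $f, g\colon \{P_i\}_I \to \{Q_j\}_J$. At the level of indices this gives $h(f(i)) = h(g(i)) =: k$, and at the level of morphisms $h_{f(i)} \circ f_i = h_{g(i)} \circ g_i$. When $f(i) \neq g(i)$, this yields a map from $P_i$ into the pullback $Q_{f(i)} \times_{R_k} Q_{g(i)}$; disjointness of the cover of $R_k$ forces this pullback to be initial, and axiom \ref{enum:wasm-initial} then permits the corresponding summand to be discarded. After this reduction one has $f(i) = g(i)$ for every $i$, and axiom \ref{enum:wasm-mono} applied to $h_{f(i)}$ forces $f_i = g_i$. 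Once $h$ is known to be mono, (CF2) holds trivially by taking $w = x$ and $i = 1_x$.

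The main obstacle lies in the monomorphism argument: one must justify that summands factoring through the initial object can be removed without altering the morphism under consideration, and verify that this reduction is compatible with the bundled cover structure in $\cal{W}(\cal{A})$. The Ore step itself is largely bookkeeping, though care is needed to ensure the refinements from axiom \ref{enum:wasm-refinement} can be chosen coherently in $k$ so that the assembled maps really define morphisms in $\cal{W}(\cal{A})$ rather than just a compatible system of $\cal{A}$-covers.
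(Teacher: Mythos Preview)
The paper does not give its own proof of this proposition; it is cited from \cite[Lemma 2.15]{klmms24}. However, the surrounding material (in particular the Corollary following \cref{prop:locWE}) makes the intended strategy visible, and your proposal diverges from it at the point you yourself flag as the main obstacle.

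Your verification of (CF1) via axiom \ref{enum:wasm-refinement} is fine and matches what the paper uses.

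Your argument for (CF2) has a genuine gap. You aim to show that every morphism of $\cal{W}(\cal{A})$ is a monomorphism, but this is false once tuples containing initial objects are allowed. For instance, the two morphisms $\{\emptyset\} \to \{\emptyset,\emptyset\}$ sending the unique index to either target become equal after composing with $\{\emptyset,\emptyset\} \to \{\emptyset\}$, so the latter is not monic. Your proposed fix, discarding the offending summands, does not establish $f=g$; at best it shows that $f$ and $g$ agree after a restriction. To convert this into a proof of (CF2) you would have to realise that restriction as a morphism $i\colon w \to x$ in $\cal{W}(\cal{A})$ equalising $f$ and $g$, which requires the empty family to cover each discarded $P_i$; axiom \ref{enum:wasm-initial} only asserts this for $\emptyset$ itself, not for objects merely admitting a map to $\emptyset$.

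The route taken in the paper's references sidesteps this entirely: one first works in $\cal{W}^\circ(\cal{A})$, where no initial objects occur in the tuples, and there every morphism really is monic (this is what \cite[Prop.~2.11]{zak16} supplies), so your argument goes through cleanly. One then extends to $\cal{W}(\cal{A})$ by the adjunction of \cite[Lemma 2.15]{klmms24} that strips out initial summands. Your proposal is on the right track for $\cal{W}^\circ(\cal{A})$ but needs this two-step structure to close the gap.
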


This gives a model for $\cal{G}(\cal{A})$ that can be useful for computations. We will use the category of fractions description to show that $B(\cal{W}(\cal{A})) \simeq B(\cal{G}(\cal{A}))$.

\begin{definition}\label{definition:Filtered_cat}
Recall that a category $\cal{C}$ is \emph{filtered} if the following conditions are satisfied: 
\begin{enumerate}[label = \bf{(F\arabic*)}]
\item  For every $x,y \in \cal{C}$ there exists some $z \in \cal{C}$ such that there exist morphisms $x \rightarrow z \leftarrow y$. 
\item For every pair \begin{tikzcd} x \ar[r , shift left = .75ex, "f"]\ar[r , shift right = .75ex, "g"'] & y  \end{tikzcd} there exists an object $z$ with morphism $h: y \rightarrow z$ such that $hf=hg$. 
\end{enumerate}  
\end{definition} 

\begin{lemma}[{\cite[Def. 2.6.13]{weibel1994introduction}}]\label{lemma:Bcontractible} Filtered categories have contractible classifying spaces. 
\end{lemma}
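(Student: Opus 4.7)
The plan is to show that every continuous map from a sphere $S^n$ into $B\cal{C}$ is nullhomotopic. Since $B\cal{C}$ is a CW complex with the weak topology, any such map factors through a finite subcomplex $K \subset B\cal{C}$, so it suffices to exhibit a nullhomotopy of $K$ inside $B\cal{C}$.

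The key step is to extract from $K$ the finite subcategory $\cal{D} \subset \cal{C}$ spanned by the objects and morphisms indexing the non-degenerate simplices of $K$, and then use the filtered structure to produce a cocone from the inclusion $\cal{D} \hookrightarrow \cal{C}$ to a single object $z \in \cal{C}$. Axiom (F1) of \cref{definition:Filtered_cat} provides, by induction over the finitely many objects of $\cal{D}$, an object $z_0$ and morphisms $\alpha_d: d \to z_0$ for each $d \in \cal{D}$. These morphisms generally fail to assemble into a natural transformation, however, since for each $f: d \to d'$ in $\cal{D}$ the two composites $d \to z_0$ (namely $\alpha_d$ and $\alpha_{d'} \circ f$) need not coincide. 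Applying axiom (F2) once per morphism of $\cal{D}$, and composing the resulting equalizing arrows, produces a post-composition $z_0 \to z$ that simultaneously identifies every such parallel pair and yields an honest cocone $\cal{D} \to z$.

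Once the cocone is in hand, the inclusion $\cal{D} \hookrightarrow \cal{C}$ factors through the slice category $\cal{C}/z$, whose classifying space is contractible because $\cal{C}/z$ has the terminal object $\id_z$: the natural transformation from the identity functor to the constant functor at $\id_z$ induces a simplicial homotopy on nerves exhibiting $B(\cal{C}/z)$ as the cone on itself. Composing $K \hookrightarrow B\cal{D} \to B(\cal{C}/z) \to B\cal{C}$ thus factors $K$ through a contractible space and gives the desired nullhomotopy.

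The main obstacle is the combinatorial bookkeeping in the second step: one must iterate (F1) and (F2) in a controlled order to ensure that, after equalizing one parallel pair of composites, previously equalized pairs remain equalized. This is handled by a finite induction on the morphisms of $\cal{D}$, where at each stage we post-compose further so that all equalities verified so far continue to hold, exploiting that the finite list of relations to be satisfied is fixed in advance.
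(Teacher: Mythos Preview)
The paper does not actually prove this lemma; it merely records it with a citation to Weibel. Your argument is the standard one and is essentially correct, so there is nothing substantive to compare.

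One small imprecision worth flagging: the collection of objects and morphisms appearing in the non-degenerate simplices of $K$ need not be closed under composition, so $\cal{D}$ is not literally a subcategory of $\cal{C}$, and speaking of the functor $\cal{D} \hookrightarrow \cal{C}$ factoring through $\cal{C}/z$ is slightly abusive. This does not damage the argument, however: the cocone data you build (morphisms $\alpha_c : c \to z$ satisfying $\alpha_{c'} \circ f = \alpha_c$ for every edge $f : c \to c'$ occurring in $K$) is exactly what is needed to lift the simplicial map $K \to N\cal{C}$ through the forgetful map $N(\cal{C}/z) \to N\cal{C}$, simplex by simplex, and the contractibility of $N(\cal{C}/z)$ then gives the nullhomotopy. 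Your inductive bookkeeping for (F1) and (F2) is fine because post-composition preserves any equalities already achieved.
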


\begin{proposition}\label{prop:locWE}
Let $\cal{C}$ be a category where every cospan can be completed to a commutative square and all morphisms are monic. Then the localization functor induces a weak equivalence $B(\cal{C}) \simeq B(\cal{C}[\cal{C}^{-1}])$.
\end{proposition}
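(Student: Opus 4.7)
The plan is to apply Quillen's Theorem A to the localization functor $\iota \colon \cal{C} \to \cal{G}$, where $\cal{G} := \cal{C}[\cal{C}^{-1}]$. First I verify that $\cal{G}$ admits a calculus of fractions with $W = \cal{C}$: condition (CF1) is exactly the cospan-completion hypothesis, and (CF2) is automatic because its postcomposed morphism $h$ is monic, forcing the parallel pair to be equal, so that $i$ can be taken to be an identity. Since every morphism of $\cal{C}$ is formally inverted, $\cal{G}$ is a groupoid, and its morphisms are equivalence classes of spans $[a \xleftarrow{s} c \xrightarrow{t} b]$. By Quillen's Theorem A, it then suffices to show that for every $b \in \cal{G}$ the comma category $\iota \downarrow b$ has contractible classifying space.

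I will show $\iota \downarrow b$ is cofiltered; contractibility of its classifying space then follows from \cref{lemma:Bcontractible} applied to the opposite category. A preliminary identity that drives both cofiltered axioms is $f \circ \iota(s) = \iota(t)$ whenever $f = [a \xleftarrow{s} c \xrightarrow{t} b]$. This is a direct span-composition calculation: the cospan $c \xrightarrow{s} a \xleftarrow{s} c$ can be completed by the identity square (a legitimate completion precisely because $s$ is monic), and the composite collapses to $[c \xleftarrow{1_c} c \xrightarrow{t} b] = \iota(t)$. Equivalently, every object $(a, f) \in \iota \downarrow b$ receives a morphism from $(c, \iota(t))$ via $s$.

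For the common-source axiom, take objects $(a_i, f_i)$ with $f_i = [a_i \xleftarrow{s_i} c_i \xrightarrow{t_i} b]$ and complete the cospan $c_1 \xrightarrow{t_1} b \xleftarrow{t_2} c_2$ to a commutative square, yielding $d$ with $u_i \colon d \to c_i$ satisfying $t_1 u_1 = t_2 u_2$. The object $(d, \iota(t_1 u_1))$ then maps to each $(a_i, f_i)$ via $s_i u_i$ by the preliminary identity. For the coequalizer axiom, given parallel $g_1, g_2 \colon (a, f) \to (a', f')$, the invertibility of $f'$ in the groupoid $\cal{G}$ cancels $f'$ from $f' \iota(g_1) = f' \iota(g_2) = f$, giving $\iota(g_1) = \iota(g_2)$ in $\cal{G}$. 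Unwinding the equivalence relation on spans produces $h \colon w \to a$ in $\cal{C}$ with $g_1 h = g_2 h$, which furnishes the required coequalizing morphism $(w, f \circ \iota(h)) \to (a, f)$ in $\iota \downarrow b$.

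The main obstacle is bookkeeping the span-composition calculations correctly, particularly ensuring that the preliminary identity $f \circ \iota(s) = \iota(t)$ is well-defined despite the indeterminacy in choosing a cospan completion. The monic hypothesis does essential work in three places: verifying (CF2), justifying the identity-square completion in the preliminary identity, and, together with the groupoid structure of $\cal{G}$, enabling the extraction of $h$ from the equivalence relation on spans in the coequalizer step.
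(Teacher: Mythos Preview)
Your proposal is correct and follows essentially the same route as the paper: apply Quillen's Theorem~A by showing each comma category $\iota\downarrow b$ is cofiltered, using cospan completion for the common-source axiom and the groupoid structure of $\cal{G}$ together with the span equivalence relation for the coequalizer axiom. You are slightly more explicit in first verifying the calculus-of-fractions conditions and in isolating the identity $f\circ\iota(s)=\iota(t)$, but these are presentational differences rather than a different argument.
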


\begin{proof}
\par Let $\iota: \cal{C} \rightarrow \cal{C}[\cal{C}^{-1}]$ be the localization functor.  Now we will show that $(\iota/c)^\op$ is a filtered category, or rather that $\iota/c$ is co-filtered.  Given two objects $x \leftarrow x' \rightarrow c$ and $y \leftarrow y' \rightarrow c$ in $\iota/c$ we can complete the cospan $x' \rightarrow c  \leftarrow y'$  to a commutative square with morphisms in $\cal{C}$ yielding a commutative diagram:
\[ \begin{tikzcd} 
x & z \ar[dl , dashed] \ar[dr , dashed] & y \\
x' \ar[u] \ar[dr] &  & y' \ar[u] \ar[dl]\\
& c. & 
\end{tikzcd}\]

By composition we have morphisms $z \rightarrow x$ and $z \rightarrow y$ from $\cal{C}$. Therefore the object $z \rightarrow c$ of $\iota/c$ maps to both the given objects $x \leftarrow x' \rightarrow c$ and $y \leftarrow y'\rightarrow c$, proving that (F1) is satisfied in $(\iota/c)^\op$. 

\par Given parallel morphisms in $\iota/c$: 
\[\begin{tikzcd}
x \ar[rr , shift right = .75, "g"'] \ar[rr, shift left = .75 ex,"f"] &&y  \\
x'\ar[u] \ar[dr] && y' \ar[u] \ar[dl]\\
& c.&
\end{tikzcd}\]

commutativity and the fact that all morphisms are isomorphisms in $\cal{C}[\cal{C}^{-1}]$ implies that $f=g$ in $\cal{C}[\cal{C}^{-1}]$. Therefore there is a diagram of the form 
\[\begin{tikzcd} 
& \ar[dl, "1_x"'] x  \ar[dr, "f"] &\\
x  & z \ar[u, "\phi"'] \ar[d, "\psi"] & y  \\
& x. \ar[ul, "1_x" ] \ar[ur , "g"']& 
\end{tikzcd} \]

which implies that $\phi=\psi$ has the property $f\phi =g \phi$. Therefore the zigzag $z \xrightarrow{\phi}  x \leftarrow x' \rightarrow c$ shows that $\iota/c$ satisfies (F2).  
Now that we have shown $\iota/c$ is contractible for all $c\in C$, we conclude by Quillen's Theorem A \cite[Ch.IV,3.7]{weibel2013k} that $\iota $ induces an equivalence on classifying spaces: $B\iota:B(\cal{C}) \simeq B\cal{C}[\cal{C}^{-1}])$. 
\end{proof}
\begin{corollary}
    If $\cal{A}$ is an assembler then $B(\cal{W}(\cal{A})) \simeq B(\cal{G}(\cal{A}))$.
\end{corollary}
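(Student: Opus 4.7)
The plan is to apply \cref{prop:locWE} to $\cal{C} = \cal{W}(\cal{A})$. Since by construction $\cal{G}(\cal{A})$ is the localization $\cal{W}(\cal{A})[\cal{W}(\cal{A})^{-1}]$, the corollary will follow at once once the two hypotheses of that proposition are verified in $\cal{W}(\cal{A})$: every morphism is a monomorphism, and every cospan admits a commutative completion.

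Monicity lifts from $\cal{A}$ to $\cal{W}(\cal{A})$ essentially formally. A morphism $(f, \{f_i\}_{i \in I}) \colon \{P_i\}_I \to \{Q_j\}_J$ decomposes into a map of indexing sets together with component morphisms in $\cal{A}$, and equality of two morphisms after postcomposition by a fixed one forces equality of the indexing functions and of the component morphisms on each summand, the latter by cancelling the fixed component morphisms, which are monic by axiom \ref{enum:wasm-mono}.

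For the cospan completion, consider two morphisms $\{P_i\}_I \to \{R_k\}_K$ and $\{Q_j\}_J \to \{R_k\}_K$ with a common target. Restricted over a fixed $k \in K$ the data amounts to two covers of $R_k$ in $\cal{A}$, namely $\{P_i \to R_k\}_{i \in f^{-1}(k)}$ and $\{Q_j \to R_k\}_{j \in g^{-1}(k)}$. Axiom \ref{enum:wasm-refinement} supplies, for each such $k$, a common refinement $\{Z_\ell^{(k)} \to R_k\}_{\ell \in L_k}$; assembling these over the finite set $K$ produces an object $\{Z_\ell\}_L$ with $L = \bigsqcup_k L_k$ and morphisms in $\cal{W}(\cal{A})$ to both $\{P_i\}_I$ and $\{Q_j\}_J$. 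Because the refinement factoring data presents each $Z_\ell^{(k)} \to R_k$ as going through both the chosen $P_i$ and the chosen $Q_j$, the two induced compositions $\{Z_\ell\}_L \to \{R_k\}_K$ coincide both on indexing functions and on components, so the completed square commutes.

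The main obstacle, such as it is, is organizational: the local refinements must package cleanly into a single morphism in $\cal{W}(\cal{A})$, which requires tracking both the indexing maps and the component morphisms from $\cal{A}$ so that commutativity of the resulting square is visible at both levels simultaneously. Once the bookkeeping is set up, the verification of both hypotheses is routine, and in particular the argument uses only the axioms of a weak assembler, so the corollary is in fact valid for every weak assembler (cf.\ \cref{remark:ForgettingwAsmStructure}).
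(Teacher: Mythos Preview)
Your overall strategy matches the paper's --- apply \cref{prop:locWE} and verify its two hypotheses --- but there is a genuine gap in your monicity argument. You assert that equality of compositions ``forces equality of the indexing functions,'' but this is false in $\cal{W}(\cal{A})$ as defined, because tuples may contain copies of the initial object $\varnothing$. Concretely, take $\{P_i\}=\{\varnothing,\varnothing\}$, $\{Q_j\}=\{\varnothing\}$, and $\{S_k\}=\{\varnothing\}$. The morphism $(f,\{f_i\})\colon\{\varnothing,\varnothing\}\to\{\varnothing\}$ given by the unique index map and identities is a morphism of $\cal{W}(\cal{A})$ (axiom~\ref{enum:wasm-initial} makes $\{\varnothing\to\varnothing,\varnothing\to\varnothing\}$ a cover of $\varnothing$), and there are two distinct morphisms $g,h\colon\{\varnothing\}\to\{\varnothing,\varnothing\}$ sending the single index to $1$ and $2$ respectively. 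One checks $f\circ g=f\circ h$ while $g\neq h$, so $f$ is not monic. Your appeal to axiom~\ref{enum:wasm-mono} only cancels the component morphisms \emph{after} you already know the indexing functions agree, which is exactly what fails here.

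The paper avoids this by first passing to $\cal{W}^\circ(\cal{A})$, the full subcategory on tuples with no initial entries, where the needed monicity and cospan-completion hold (this is where disjointness of covers does real work: if $g(k)\neq h(k)$ then $S_k$ would factor through the initial pullback $P_{g(k)}\times_{Q_j}P_{h(k)}$, impossible for $S_k\not\cong\varnothing$). One then invokes the adjunction between $\cal{W}$ and $\cal{W}^\circ$ to transfer the equivalence back. Your cospan-completion argument via axiom~\ref{enum:wasm-refinement} is fine and is essentially the content of the cited result; it is only the monicity step that needs the restriction to $\cal{W}^\circ(\cal{A})$ (or an equivalent argument using disjointness to rule out discrepancies on non-initial pieces).
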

\begin{proof} Let $\cal{A}$ be an assembler. And let $\cal{W}^\circ (\cal{A})$ be the version of the category of covers where we do not allow initial objects in the tuples. 
By \cite[Prop.2.11]{zak16}, we have that any cospan in $\cal{W}^\circ(\cal{A})$ can be completed to a square and every morphism is monic. Therefore $\cal{W}(\cal{A})[\cal{W}(\cal{A})^{-1}]$ satisfies the conditions of \cref{prop:locWE} above. By the adjunction used in \cite[Lemma 2.15]{klmms24} the result may be extended to the version of $\cal{G}(\cal{A})$ where initial objects are permitted. 
\end{proof}

\subsection{Homotopy orbits and the Grothendieck construction for weak assemblers}
We will recall some results and terminology from \cite{bgmmz}. Some results from this section hold for categories with covering families, a generalization of assemblers, but for our purposes we consider the case of an assembler.

\begin{definition}
    Let $G$ be a group, considered a category with one object and only invertible morphisms. A $G$-category with covering family structure is a functor $F: G \rightarrow \catfam$. We use $*_G$ for the object of $G$. 
\end{definition}
\begin{definition}\label{def:grothwasm}
	Considering a group $G$ as a category with one object, a functor  
    \[ F: G \rightarrow \aut_{w\asm}(\cal{A})\]
    makes $\cal{A}$ a $G$-category with covering family structure. Unless multiple actions are present, we will suppress $F$ from the notation, writing $\Phi A=F(\Phi)A$ and $\Phi \psi=F(\Phi)\psi$ where $A$ is an object of $\cal{A}$ and $\psi$ a morphism of $\cal{A}$. 
	The category $\cal{A}_{hG}=G \widetilde{\int} F$ is a modification of the classical Grothendieck construction, $G\int F$ which is suitable for the pointed setting defined by: 
 \begin{align*}
     \ob \cal{A}_{hG} &= \ob F(*_G) = \ob \cal{A}\\
     \cal{A}_{hG} (a,b) &= \bigsqcup_{\Phi \in G} \cal{A}(\Phi a, b) 
\end{align*}
Composition of two arrows
\[ a \xleftarrow{(\phi,\Phi)} b \xleftarrow{(\psi,\Psi)} c\]
is defined as
\[a \xleftarrow{(\phi \Phi(\psi), \Phi\Psi)} c.\]

 In the special case when $A=*$ , we define a morphism $* \rightarrow B$ to be the same as a morphism in $\cal{C}$. If $\psi: * \rightarrow B$ and $(\phi, \Phi): B \rightarrow C$ then the composition is given by $(\phi, \Phi) \circ \psi = \phi \circ \Phi (\psi)$.
\par Covering structure is defined by declaring 
\[ \{  a  \leftarrow a_i: (\phi_i, \Phi_i)\}_{i \in I}\]  to be a cover whenever 
\[\{  a \leftarrow \Phi_ia_i: \phi_i\}_{i \in I} \] 
is a cover in $\cal{A}$.   
  \end{definition} 

\begin{remark}
    We will use the convention that the lowercase Greek letters are morphisms of the category $\cal{A}$ and the uppercase Greek letters are group elements. 
\end{remark}

\begin{proposition}\label{prop:htpyorbitwassm}
    If $\cal{A}$ is a weak assembler and $F:G \rightarrow \aut_{\wasm}(\cal{A})$ defines an action by isomorphisms of weak assemblers, then $\cal{A}_{hG}$ is a weak assembler.
\end{proposition}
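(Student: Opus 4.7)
The plan is to check each of the four axioms \ref{enum:wasm-id}, \ref{enum:wasm-initial}, \ref{enum:wasm-refinement}, \ref{enum:wasm-mono} of \cref{def:wasm} for $\cal{A}_{hG}$ by translating back to the corresponding statement in $\cal{A}$ via the rule: a multimorphism $\{(\phi_i,\Phi_i):a_i\to a\}_{i\in I}$ is a cover in $\cal{A}_{hG}$ iff $\{\phi_i:\Phi_i a_i\to a\}_{i\in I}$ is a cover in $\cal{A}$. The whole argument leans on the fact that every $\Phi\in G$ acts as a morphism of weak assemblers on $\cal{A}$, hence preserves the initial object, monomorphisms, and covering families.

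First I would dispatch \ref{enum:wasm-id}. The identity $\{(1_a, e)\}$ translates to $\{1_a:a\to a\}$, which is a cover in $\cal{A}$. For closure under composition, given covers $\{(\phi_i,\Phi_i):a_i\to a\}_{i\in I}$ and $\{(\psi_{ij},\Psi_{ij}):a_{ij}\to a_i\}_{j\in J_i}$, the composite in $\cal{A}_{hG}$ consists of $(\phi_i\Phi_i(\psi_{ij}),\Phi_i\Psi_{ij})$, which translates to $\{\phi_i\circ\Phi_i(\psi_{ij}):\Phi_i\Psi_{ij}a_{ij}\to a\}$. Since $\Phi_i$ preserves covers, $\{\Phi_i(\psi_{ij}):\Phi_i\Psi_{ij}a_{ij}\to\Phi_ia_i\}$ is a cover in $\cal{A}$, and composing with $\{\phi_i:\Phi_ia_i\to a\}$ gives a cover in $\cal{A}$ by \ref{enum:wasm-id} for $\cal{A}$. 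For \ref{enum:wasm-initial}, the initial object $\varnothing$ of $\cal{A}$ is fixed by every $\Phi$, and the basepoint convention in \cref{def:grothwasm} ensures that $\cal{A}_{hG}(\varnothing,b)=\cal{A}(\varnothing,b)$ is a singleton, so $\varnothing$ is initial in $\cal{A}_{hG}$. The empty family is vacuously a cover, and since $\Phi\varnothing=\varnothing$, inserting or deleting a copy of $\varnothing$ from a cover in $\cal{A}_{hG}$ corresponds to the same operation on the translated cover in $\cal{A}$, which is allowed by \ref{enum:wasm-initial} for $\cal{A}$.

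For \ref{enum:wasm-refinement}, given two covers of $a$ in $\cal{A}_{hG}$, their translations are covers of $a$ in $\cal{A}$, which by \ref{enum:wasm-refinement} for $\cal{A}$ admit a common refinement $\{\eta_k:c_k\to a\}_{k\in K}$. This is reinterpreted in $\cal{A}_{hG}$ as $\{(\eta_k,e):c_k\to a\}_{k\in K}$; the refinement data (the subcovers of each $\Phi_ia_i$ and $\Psi_ja_j$, together with the indexing functions) lifts back to refinement data in $\cal{A}_{hG}$ by applying $\Phi_i^{-1}$ and $\Psi_j^{-1}$ componentwise, using again that these are weak assembler isomorphisms. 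Finally, \ref{enum:wasm-mono} follows because a morphism $(\phi,\Phi):a\to b$ in $\cal{A}_{hG}$ is determined by $\Phi\in G$ together with $\phi\in\cal{A}(\Phi a,b)$: if $(\phi,\Phi)(\psi_1,\Psi_1)=(\phi,\Phi)(\psi_2,\Psi_2)$, comparing group components forces $\Psi_1=\Psi_2$, and then $\phi\circ\Phi(\psi_1)=\phi\circ\Phi(\psi_2)$ together with $\phi$ monic in $\cal{A}$ and $\Phi$ invertible gives $\psi_1=\psi_2$.

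The step most prone to pitfalls is \ref{enum:wasm-refinement}, since one must carry the bookkeeping of refinement indices and subcovers through the twist by $G$ — but because $G$ acts by weak assembler isomorphisms, every piece of the refinement structure in $\cal{A}$ has a preimage in $\cal{A}_{hG}$ obtained by tagging morphisms with the appropriate group element. The genuine subtlety lies in the basepoint/initial-object convention baked into \cref{def:grothwasm}: without this convention, $\cal{A}_{hG}(\varnothing,b)$ would be indexed by all of $G$ rather than a singleton and $\varnothing$ would fail to remain initial, so the proof does depend on reading that convention carefully.
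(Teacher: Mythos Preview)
Your proof is correct and follows essentially the same route as the paper. The paper's own argument only spells out \ref{enum:wasm-mono} and \ref{enum:wasm-refinement}, treating \ref{enum:wasm-id} and \ref{enum:wasm-initial} as routine, but for the two nontrivial axioms your reasoning matches theirs: for \ref{enum:wasm-mono} you compare group components to get $\Psi_1=\Psi_2$ and then use monicity of $\phi$ in $\cal{A}$ plus faithfulness of $\Phi$, and for \ref{enum:wasm-refinement} you translate both covers to $\cal{A}$, take a common refinement $\{\eta_k\}$ there, promote it to $\{(\eta_k,1_G)\}$ in $\cal{A}_{hG}$, and recover the refining subcovers by applying $\Phi_i^{-1}$, $\Psi_j^{-1}$ componentwise --- exactly as the paper does.
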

\begin{proof}
  We verify axioms (R) and (M) for $\cal{A}_{hG}$. 
  \par For (M) we need to show that all morphisms in $\cal{A}_{hG}$ are monic. Suppose that the diagram 
  \[
  \begin{tikzcd}
      a & b \ar[l, "(\phi{,} \Phi)"'] & x\ar[l, "(\xi{,}\Xi)", shift left=.75ex] \ar[l, "(\omega{,}\Omega)"', shift right = .75ex] 
  \end{tikzcd}
  \]
  commutes in $\cal{A}_{hG}$ so that we have 
  \begin{align*}
      (\phi, \Phi) \circ (\omega , \Omega) &=  (\phi, \Phi) \circ (\xi, \Xi)\\
      (\phi \Phi(\omega), \Phi\Omega) &= (\phi \Phi(\xi), \Phi\Xi).
  \end{align*}
  Therefore $\phi \Phi(\omega)=\phi \Phi(\xi)$ in $\cal{A}$, which yields $\Phi(\omega)=\Phi(\xi)$ since all morphisms in $\cal{A}$ are monic. Since $G$ is a group action we have $\omega = \xi$ in $\cal{A}$. By the definition of morphism in $\cal{A}_{hG}$ we have $\Phi \Omega= \Phi \Xi$ which implies $\Omega = \Xi$ since $\Phi$ is invertible. Therefore $(\omega, \Omega) = (\xi, \Xi)$, proving that every morphism in $\cal{A}_{hG}$ is monic. 
  \par For (R) take two covers in $\cal{A}_{hG}$ 
\begin{align*}
    \{a \leftarrow b_i : &(\phi_i, \Phi_i) \}_{i \in I}\\
    \{a \leftarrow c_j : &(\psi_j, \Psi_j) \}_{j \in J}.
\end{align*}
By definition there are corresponding covers 
\begin{align*}
    \{a \leftarrow \Phi_ib_i : &\phi_i \}_{i \in I}\\
    \{a \leftarrow \Psi_j c_j : &\psi_j \}_{j \in J}.
\end{align*}
in $\cal{A}$ which have a common subdivision 
\[
\{a \leftarrow d_k : \eta_k \}_{k \in K}
\]
since $\cal{A}$ is a weak assembler.
The common subdivision includes the data of functions $f: K \rightarrow I $ and $g: K \rightarrow J$ such that there are covers
\begin{align*}
    \{\Phi_ib_i &\xleftarrow{\lambda_k} d_k\}_{k \in f^{-1}(i)}\\
    \{\Psi_j c_j &\xleftarrow{\gamma_k} d_k\}_{k \in g^{-1}(j)}
\end{align*}
for every $i \in I$ and every $j \in J$ such that 
\[
    \psi_{g(k)} \gamma_k=\eta_k = \phi_{f(k)} \lambda_k
\]
for every $k \in K$. 
\par We claim that 
\[ 
\{a \xleftarrow{(\eta_k,1_G)} d_k\}_{k \in K}
\]
is a common subdivision of the given covers from $\cal{A}_{hG}.$
Since $G$ acts by functors which preserve covering structure there are covers,
\begin{align*}
    \{b_i &\xleftarrow{\Phi^{-1}_{f(k)}(\lambda_k)} \Phi^{-1}_{f(k)}d_k\}_{k \in f^{-1}(i)}\\
    \{ c_j &\xleftarrow{\Psi_{g(k)}^{-1}(\gamma_k)} \Psi^{-1}_{g(k)}d_k\}_{k \in g^{-1}(j)}
\end{align*}
for every $i \in I$ and every $j \in J$.
Verifying that  
\[
    (\phi_{f(k)}, \Phi_{f(k)}) \circ (\Phi^{-1}_{f(k)} (\lambda_k), \Phi^{-1}_{f(k)})=(\eta_k, 1_G) = (\psi_{g(k)}, \Psi_{g(k)}) \circ (\Psi^{-1}_{g(k)} (\gamma_k), \Psi_{g(k)}^{-1})
\]
shows that $(\eta_k, 1_G)_{k \in K}$ defines a common subdivision.
\end{proof}

\begin{definition}
Since $\cal{A}_{hG}$ is a weak assembler we may take the associated category of covers $\cal{W}_{hG}$. We will make some conventions when working with $\cal{W}_{hG}$. For a morphism $(\phi_i, \Phi_i): \{a_i\}_{i \in I} \rightarrow \{b_j\}_{j \in J}$ we will write the associated set map $\underline{\phi}: I \rightarrow J$. 

\par In $\cal{W}(\cal{A}_{hG})$ we distinguish two special classes of morphisms. 
\begin{itemize}
\item A \emph{move} is a morphism of the form $(\phi_i, \Phi_i): \{a_i\}_{i \in I } \rightarrowtail \{b_i\}_{i \in I}$ such that for each component $\phi_i: a_i \rightarrow b_{\underline{\phi}(i)}$ we have $\phi_i = 1_{a_i}$.\\ 
 \item  A \emph{covering sub-map} is a morphism $(\phi_i, \Phi_i):\{a_i\}_{i \in I} \dashrightarrow \{b_j\}_{j \in J}$ such that on components $(\phi_i, \Phi_i): P_i \rightarrow Q_{\underline{\phi}(i)}$ we have $\Phi_i=1_G$. 
 \end{itemize}
 \end{definition}
 \begin{remark}
     The intuition is that moves use just group elements to rearrange a collection of pieces without changing their indices in $I$, whereas a covering sub-map sticks a collection of pieces together. There is a functor $\iota_{hG}: \cal{A} \rightarrow \cal{A}_{hG}$  defined  by 
     \[
     \iota_{hG}(a \xrightarrow{\phi} b)=a \xrightarrow{(\phi, 1_G)}b. \]
     A covering sub-map is just a morphism in the image of 
     \[ \cal{W}(\iota_{hG}) : \cal{W}(\cal{A}) \rightarrow \cal{W}(\cal{A}_{hG}).\] 
 \end{remark}
 
\begin{example}[{\cite[Example 4.5]{bgmmz}}]\label{htpyorbiteuc}
    Let $\cal{E}_e^n$ be the no-moving assembler of euclidean space $E^n$.  The objects are polytopes in $E^n$, the morphisms are inclusions $a \subseteq b$ where $a$ and $b$ are polytopes. A finite collection of morphisms 
    $\{\phi_i: a_i \rightarrow a\}_{i \in I}$ is a cover if $a=\bigcup a_i$ and the $a_i$ are pairwise interior disjoint. 
    \par  Let $G \leq \aut(E^n)$ be a subgroup of Euclidean isometries. 
    There is a functor $F:G \rightarrow \aut_{\wasm}(\cal{E}^{n}_{e})$ where $G$ acts by isometries. From this data we form the homotopy orbit category $(\cal{E}^n_{e})_{hG}=G \widetilde{\int}  F$. We denote this category by $\cal{E}^n_G$ as in \cite[Notation 5.2]{klmms24}. 
\end{example}
The morphisms in $\cal{A}_{hG}$ may be factored into a move followed by a covering sub-map. Morphisms in $\cal{A}_{hG}$ contain data from the $G$-action and from the data of the morphisms in $\cal{A}$: these features of a morphism can be separated in $\cal{A}_{hG}$.
 \begin{definition}[{\cite[Definition 5.3]{bgmmz}}]\label{doublecatstructure}
  There is a double category $\cal{W}(\cal{A}_{hG})$ with 
 \begin{itemize}
     \item $\ob \cal{W}^\square (\cal{A}_{hG})= \ob \cal{W} (\cal{A}_{hG})$.
     \item Horizontal morphisms are moves.
     \item Vertical morphisms are covering sub-maps.
     \item 2-cells are commuting squares in $\cal{W}(\cal{A}_{hG})$.
 \end{itemize}
\end{definition}
The following lemmas detail properties of $\cal{W}^\square (\cal{A}_{hG})$.
\begin{lemma}[{\cite[Lemma 5.3]{bgmmz}}] \label{factoring} Every morphism in $\cal{W}(\cal{A}_{hG})$ factors up to unique isomorphism as a move followed by a covering sub-map 
    \[
    \begin{tikzcd}
        \bullet && \bullet \ar[ll] \ar[dl, tail]\\
        &\bullet. \ar[ul, dotted]&
    \end{tikzcd}
    \]
\end{lemma}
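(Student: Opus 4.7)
The plan is to construct an explicit factorization and then check that any such factorization is forced to agree with it. Given a morphism $(f, \{(\phi_i, \Phi_i)\}_{i \in I}) \colon \{a_i\}_{i \in I} \to \{b_j\}_{j \in J}$ in $\cal{W}(\cal{A}_{hG})$ — so that $f \colon I \to J$ and each $\phi_i$ is an $\cal{A}$-morphism $\Phi_i a_i \to b_{f(i)}$ — I would set the intermediate object to be $Z := \{\Phi_i a_i\}_{i \in I}$. The first leg is the move $(\mathrm{id}_I, \{(1_{\Phi_i a_i}, \Phi_i)\}_{i \in I}) \colon \{a_i\}_{i \in I} \rightarrowtail Z$; on each singleton fiber the corresponding family in $\cal{A}$ is the identity cover of $\Phi_i a_i$, which translates to a cover in $\cal{A}_{hG}$ by the translation rule in \cref{def:grothwasm}. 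The second leg is the covering sub-map $(f, \{(\phi_i, 1_G)\}_{i \in I}) \colon Z \dashrightarrow \{b_j\}_{j \in J}$; the requirement that each fiber $\{\phi_i \colon \Phi_i a_i \to b_j\}_{i \in f^{-1}(j)}$ is a cover in $\cal{A}$ is exactly the covering condition on the original morphism, read off through the definition of covers in $\cal{A}_{hG}$.

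Applying the composition law $(\phi,\Phi) \circ (\psi,\Psi) = (\phi \cdot \Phi(\psi),\Phi\Psi)$ componentwise, the composite has $\cal{A}$-coordinate $\phi_i \cdot 1_G(1_{\Phi_i a_i}) = \phi_i$ and group coordinate $1_G \cdot \Phi_i = \Phi_i$, so it recovers the original morphism.

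For uniqueness, I would argue that any second factorization is forced to coincide with this one on the nose. A move out of $\{a_i\}_{i \in I}$ preserves the indexing set and has identity $\cal{A}$-components, so its codomain is necessarily of the form $\{\Psi_i a_i\}_{i \in I}$ for some collection $\Psi_i \in G$, with move components $(1_{\Psi_i a_i}, \Psi_i)$. The subsequent covering sub-map then has components $(\gamma_i, 1_G)$ with $\gamma_i \colon \Psi_i a_i \to b_{f'(i)}$ in $\cal{A}$. Matching the resulting composite $(\gamma_i, \Psi_i)$ against $(\phi_i, \Phi_i)$ forces $\Psi_i = \Phi_i$ (from the group coordinate), hence $\{\Psi_i a_i\}=Z$, and then $\gamma_i = \phi_i$ and $f'=f$. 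So the intermediate object and the two legs agree with the constructed ones, and the unique isomorphism between any two such factorizations is the identity.

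The only piece of bookkeeping that requires care is cleanly separating the group-theoretic and $\cal{A}$-theoretic parts of a morphism in $\cal{A}_{hG}$, and tracking how the covering structure of $\cal{A}_{hG}$ from \cref{def:grothwasm} translates between the two legs of the factorization; with this in hand both existence and uniqueness reduce to a direct calculation with the Grothendieck composition law.
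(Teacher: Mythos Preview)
Your argument is correct and is exactly the factorization the paper has in mind: the paper does not prove this lemma itself (it is cited from \cite{bgmmz}), but the explicit factorization through $\{\Phi_i a_i\}_{i\in I}$ with move $\{\Phi_i\}$ and covering sub-map $\{\phi_i\}$ is precisely what is used without comment in the proof of \cref{prop:rep}.

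One small point on uniqueness: your argument tacitly assumes the underlying set map of a move is the identity on $I$. The paper's definition of a move only forces the $\cal{A}$-components to be identities, which (together with the disjointness of covers) forces the set map to be a bijection of $I$, not literally the identity. If one allows a nontrivial bijection $\sigma$, the intermediate object becomes the permuted tuple $\{\Phi_{\sigma^{-1}(i)} a_{\sigma^{-1}(i)}\}_{i\in I}$, and the comparison with your canonical factorization is via the evident permutation isomorphism---this is where the phrase ``up to unique isomorphism'' in the statement earns its keep. With that caveat, your matching of group coordinates $\Psi_i=\Phi_i$ and then $\cal{A}$-coordinates $\gamma_i=\phi_i$ goes through and gives uniqueness of the comparison isomorphism as well.
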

\begin{lemma}[{\cite[Lemma5.5]{bgmmz}}]\label{completingsquares}
    Each diagram in $\cal{W}^{\square} (\cal{A}_{hG})$ of the form 
    \[ 
    \begin{tikzcd}
        & \{a_i\}_{i \in I} \ar[d, dotted]\\
        \{b_j\}_{j \in J} & \ar[l, tail] \{c_k\}_{k \in K}
    \end{tikzcd}
    \text{ or }
    \begin{tikzcd}
        \{b_j\}_{j \in J}& \ar[l, tail] \{a_i\}_{i \in I} \ar[d, dotted]\\
        &  \{c_k\}_{k \in K}
    \end{tikzcd}
    \] completes uniquely (up to unique isomorphism) to a 2-cell in $\cal{W}(\cal{A}_{hG})$.
\end{lemma}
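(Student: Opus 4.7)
The plan is to reduce both cases to the factorization \cref{factoring}, applied to an appropriate composite in $\cal{W}(\cal{A}_{hG})$. A key preliminary observation is that moves are invertible in $\cal{W}(\cal{A}_{hG})$: a move with components $(1, \Phi_i)$ admits the move with components $(1, \Phi_i^{-1})$ as its two-sided inverse, since each $\Phi_i \in G$ is invertible.

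For the first shape, given the cov sub-map $\phi: \{a_i\} \to \{c_k\}$ and the move $\Psi: \{c_k\} \to \{b_j\}$, I would form the composite $\Psi \circ \phi: \{a_i\} \to \{b_j\}$ and apply \cref{factoring} to factor it uniquely as a move $\mu: \{a_i\} \to \{d_i\}$ followed by a cov sub-map $\eta: \{d_i\} \to \{b_j\}$. Setting $\Omega := \mu^{-1}$ gives the horizontal move completing the square with $\{d_i\}$ in the top-left, and $\eta$ is the required vertical cov sub-map; commutativity $\Psi \circ \phi \circ \Omega = \eta$ is automatic from the factorization. Tracing through the definitions one finds $d_i = \Psi_{\underline{\phi}(i)} a_i$, with $\eta$ given on components by $\Psi_{\underline{\phi}(i)}(\phi_i): d_i \to b_{\underline{\Psi}(\underline{\phi}(i))}$; the cov sub-map condition follows because each $\Psi_k \in G$ acts by an automorphism of weak assemblers and therefore preserves covers.

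For the second shape, I would run the analogous argument using the inverse move. Form $\phi \circ \Psi^{-1}: \{b_j\} \to \{c_k\}$, factor it via \cref{factoring}, and then read off the completing cov sub-map $\eta: \{b_j\} \to \{d_?\}$ and the completing move $\Omega: \{c_k\} \to \{d_?\}$ from the intermediate object, inverting the move part of the factorization as needed.

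Uniqueness of the completed 2-cell up to unique isomorphism will follow directly from the uniqueness of the factorization in \cref{factoring}. The hardest step will be the bookkeeping: verifying that the arrows produced in each case have the correct types (move versus cov sub-map) and point in the correct directions, and that the resulting square commutes in $\cal{W}(\cal{A}_{hG})$. This amounts to chasing the $\cal{A}$-morphism and group-element components of each morphism through the composition rule of the Grothendieck construction and invoking $G$-equivariance of the covering family structure.
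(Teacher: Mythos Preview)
The paper does not give its own proof here; the lemma is quoted from \cite[Lemma~5.5]{bgmmz}. So there is no in-paper argument to compare against, and I will simply assess your plan.

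For the first shape your reduction to \cref{factoring} is exactly right: composing the given covering sub-map with the given move and then factoring the composite produces the missing top move and left covering sub-map, and uniqueness is inherited from the uniqueness clause of \cref{factoring}. (The step $\Omega:=\mu^{-1}$ is unnecessary---the move $\mu$ coming out of the factorization already points from $\{a_i\}$ to the new object in the required direction---but this is harmless.)

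For the second shape there is a genuine gap. Factoring $\phi\circ\Psi^{-1}\colon\{b_j\}\to\{c_k\}$ via \cref{factoring} yields a \emph{move} $\{b_j\}\to\{e\}$ followed by a \emph{covering sub-map} $\{e\}\to\{c_k\}$. But the 2-cell you are trying to build needs a \emph{covering sub-map} out of $\{b_j\}$ and a \emph{move} out of $\{c_k\}$, both landing in a common new vertex. The factorization hands you the wrong morphism types in the wrong positions, and ``inverting the move part as needed'' cannot turn a move into a covering sub-map. Concretely, the bottom move must attach a single group element $\Omega_k$ to each $c_k$, whereas distinct indices $i$ with $\underline{\phi}(i)=k$ may carry distinct group elements $\Psi_i$ from the given move; your argument offers no mechanism for reconciling these. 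This case is not a formal consequence of \cref{factoring} in the way the first case is, and you will need to engage with the actual construction in \cite{bgmmz} (or give a direct argument handling the fibrewise dependence of the $\Psi_i$) rather than appealing to the factorization lemma alone.
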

\begin{lemma}\label{nomovesquare}
Any diagram of the form 
\[ \begin{tikzcd}
\{a_i\}_{i \in I} \ar[d, dotted]& \\
\{b_j\}_{j \in J}& \{c_k\}_{k \in K} \ar[l, dotted]\\
\end{tikzcd}\]
can be completed to a commutative square of the form
\[ \begin{tikzcd}
\{a_i\}_{i \in I} \ar[d, dotted]& \{d_l\}_{l \in L} \ar[d, dotted] \ar[l, dotted]\\
\{b_j\}_{j \in J}& \{c_k\}_{k \in K}. \ar[l, dotted]\\
\end{tikzcd}\]
\end{lemma}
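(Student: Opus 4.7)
The plan is to reduce this to axiom \ref{enum:wasm-refinement} for the underlying weak assembler $\cal{A}$ by exploiting the fact that covering sub-maps carry only the trivial group element. Write the two given morphisms as $(\phi_i, 1_G): \{a_i\}_{i \in I} \to \{b_j\}_{j \in J}$ with underlying set map $\underline{\phi}$, and $(\psi_k, 1_G): \{c_k\}_{k \in K} \to \{b_j\}_{j \in J}$ with underlying set map $\underline{\psi}$. For each fixed $j \in J$, the morphisms unpack into two honest covers of $b_j$ in $\cal{A}$, namely $\{\phi_i: a_i \to b_j\}_{i \in \underline{\phi}^{-1}(j)}$ and $\{\psi_k: c_k \to b_j\}_{k \in \underline{\psi}^{-1}(j)}$.

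The main construction proceeds fiberwise. For each $j \in J$, I apply \ref{enum:wasm-refinement} to select a common refinement $\{\eta_\ell: d_\ell \to b_j\}_{\ell \in L_j}$, together with the accompanying factorization data: set maps $f_j: L_j \to \underline{\phi}^{-1}(j)$ and $g_j: L_j \to \underline{\psi}^{-1}(j)$, along with morphisms $\lambda_\ell: d_\ell \to a_{f_j(\ell)}$ and $\gamma_\ell: d_\ell \to c_{g_j(\ell)}$ in $\cal{A}$ satisfying $\phi_{f_j(\ell)} \circ \lambda_\ell = \eta_\ell = \psi_{g_j(\ell)} \circ \gamma_\ell$. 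Setting $L = \bigsqcup_{j \in J} L_j$, these families bundle into proposed morphisms $(\lambda_\ell, 1_G): \{d_\ell\}_{\ell \in L} \to \{a_i\}_{i \in I}$ and $(\gamma_\ell, 1_G): \{d_\ell\}_{\ell \in L} \to \{c_k\}_{k \in K}$, which are covering sub-maps since every group-element slot is $1_G$.

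Two routine verifications close the argument. To confirm that $(\lambda_\ell, 1_G)$ is actually a morphism in $\cal{W}(\cal{A}_{hG})$, fix $i \in I$ and let $j = \underline{\phi}(i)$; the fiber over $i$ is then $f_j^{-1}(i) \subseteq L_j$, and the refinement data guarantees that $\{\lambda_\ell: d_\ell \to a_i\}_{\ell \in f_j^{-1}(i)}$ is a cover in $\cal{A}$, hence in $\cal{A}_{hG}$ by the criterion in \cref{def:grothwasm}. The identical check handles $(\gamma_\ell, 1_G)$. Commutativity of the completed square is then immediate from the equation $\phi_{f_j(\ell)} \circ \lambda_\ell = \psi_{g_j(\ell)} \circ \gamma_\ell$ at the level of components, together with the observation that both set-map compositions collapse to $\ell \mapsto j$ for $\ell \in L_j$.

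I do not anticipate any serious obstacle; the argument is essentially a repackaging of the refinement axiom applied separately to each $b_j$. The only point requiring care is bookkeeping the disjoint union $L = \bigsqcup_j L_j$ so that the two legs of the completed square agree on indexing, and verifying that the local (per-$j$) refinement data does assemble into a globally well-defined covering sub-map in $\cal{W}(\cal{A}_{hG})$.
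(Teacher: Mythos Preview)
Your proposal is correct and takes essentially the same approach as the paper: both reduce to axiom \ref{enum:wasm-refinement} for the underlying weak assembler $\cal{A}$ via the observation that covering sub-maps are exactly the morphisms coming from $\cal{W}(\cal{A})$. The paper's proof is a two-sentence sketch invoking this identification and axiom (R), whereas you have spelled out the fiberwise application of (R) over each $b_j$ and the reassembly into a morphism of $\cal{W}(\cal{A}_{hG})$ in full detail.
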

\begin{proof}
 The subcategory of $\cal{W}(\cal{A}_{hG})$ consisting of covering sub-maps is isomorphic to $\cal{W}(\cal{A})$. The desired commuting square exists by axiom (R) in $\cal{A}$. 
\end{proof}
 
\begin{definition}
We call a span of the form 
\[\begin{tikzcd}
   \bullet & \ar[l, dotted]\bullet & \ar[l, tail]  \ar[r, dotted] \bullet & \bullet 
\end{tikzcd}\]
a \emph{DMC-span}. Since an inverted covering sub-map can be thought of as a dissection, we use DMC for  "dissection, move, covering sub-map." 
\end{definition}
\begin{notation}
    We follow the decluttering convention of \cite[Notation 5.7]{bgmmz}. We write $f_{i|j}$ in place of $(f_i)_j$ for the $j$-th component of the morphism $f_i$. 
\end{notation}
 \begin{proposition}\label{prop:rep}
 Every morphism 
\[ \begin{tikzcd}
\{a_{0|i}\}_{i \in I_0}  & \{b_{0|j}\}_{j\in J_0}   \ar[l, "(\phi_j{,}  \Phi_j)"'] \ar[r, "(\psi_j {,} \Psi_j)"] & \{a_{1|i}\}_{i \in I_1}
\end{tikzcd} \] 
in $\cal{G}(\cal{A}) $ can be represented by an DMC-span.  
\end{proposition}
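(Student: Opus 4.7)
The plan is to factor each leg of the given span separately via the canonical move-then-covering-sub-map decomposition from \cref{factoring}, and then to compress the two resulting moves into a single middle move using invertibility of group elements.

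Concretely, applying \cref{factoring} to the left leg produces a factorization
\[
\{b_{0|j}\}_{j \in J_0} \xrightarrow{\mu_0} \{c_j\}_{j \in J_0} \xrightarrow{\gamma_0} \{a_{0|i}\}_{i \in I_0},
\]
where $c_j := \Phi_j b_{0|j}$, the move is $\mu_0 = (1_{b_{0|j}}, \Phi_j)$, and the covering sub-map is $\gamma_0 = (\phi_j, 1_G)$. Symmetrically, the right leg factors through intermediate objects $d_j := \Psi_j b_{0|j}$ via a move $\mu_1 = (1_{b_{0|j}}, \Psi_j)$ followed by a covering sub-map $\gamma_1 = (\psi_j, 1_G)$.

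The crucial step is then to merge the two moves. Because the components of a move are identity morphisms of $\cal{A}$ and each group element is invertible, $\mu_0$ admits a two-sided inverse $\mu_0^{-1} = (1_{c_j}, \Phi_j^{-1})$ already in $\cal{W}(\cal{A}_{hG})$. Unwinding the composition rule from \cref{def:grothwasm}, the composite $\mu := \mu_1 \circ \mu_0^{-1} = (1_{c_j}, \Psi_j \Phi_j^{-1})$ is again a move, and so is its inverse $\mu^{-1}\colon \{d_j\} \to \{c_j\}$. This produces the candidate DMC-span
\[
\{a_{0|i}\} \xleftarrow{\gamma_0} \{c_j\} \xleftarrow{\mu^{-1}} \{d_j\} \xrightarrow{\gamma_1} \{a_{1|i}\}.
\]

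To finish, one checks that this zigzag represents the same morphism of $\cal{G}(\cal{A})$ as the original span. Reading the DMC-span in the localization yields $\gamma_1 \circ (\mu^{-1})^{-1} \circ \gamma_0^{-1} = \gamma_1 \mu \gamma_0^{-1} = (\gamma_1 \mu_1)(\gamma_0 \mu_0)^{-1} = (\psi_j,\Psi_j) \circ (\phi_j,\Phi_j)^{-1}$, which is exactly the morphism represented by the original span. The main technical obstacle lies in the third paragraph: one must justify that $\mu_1 \circ \mu_0^{-1}$ is a bona fide morphism in $\cal{W}(\cal{A}_{hG})$ that is literally a move, rather than only becoming one after passing to the localization. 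This reduces to a careful but routine unwinding of the composition law of \cref{def:grothwasm}, tracking that indexing sets agree along both factorizations and that identity components compose to identity components while group elements multiply coordinate-wise.
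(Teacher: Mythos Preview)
Your argument is correct and is essentially the same as the paper's: both proofs factor each leg via \cref{factoring} and then collapse the two resulting moves into a single move using that moves (being built from group elements) are invertible in $\cal{W}(\cal{A}_{hG})$. The paper packages this into one commutative diagram while you write out the composite algebraically, but the content is identical; your only slip is notational (the identity component of $\mu_0$ should be $1_{c_j}=1_{\Phi_j b_{0|j}}$ rather than $1_{b_{0|j}}$, per the convention $\cal{A}_{hG}(a,b)=\bigsqcup_\Phi \cal{A}(\Phi a,b)$), and it does not affect the argument.
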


\begin{proof}
By applying the factorization into a move followed by a covering sub-map to the given span we obtain the commutative diagram: 

\[ \begin{tikzcd}
\{a_{0|i}\}_{i \in I_0}  && \{b_{0|j}\}_{j\in J_0} \ar[dl, tail, "\{\Phi_j\}"'] \ar[dr, tail, "\{ \Psi_j\}"]  \ar[ll, "(\phi_j{,}  \Phi_j)"'] \ar[rr, "(\psi_j {,} \Psi_j)"] && \{a_{1|i}\}_{i \in I_1}  \\
&\{\phi_j b_{0|j}\}_{j \in J_0}\ar[ul, dotted, "\{\phi_j\}"]&& \{\Psi_{j}b_{0|j}\}_{j \in J_0}. \ar[ur, dotted, "\{\psi_j\}"'] \ar[ll, tail, "\{\Phi_j \Psi^{-1}_j\}"]&
\end{tikzcd}\]
The move at the bottom exists by the fact that, for a group action, moves are invertible and the inverse of a move is a move. We have produced the desired equivalence. Since we are working in the groupoid $\cal{G}(\cal{A})$ every morphinism is invertible so the composite along the bottom of the diagram represents the same morphism as the composite along the top. 
\end{proof}
\begin{definition}\label{dmcrefinement}
    We say that the DMC-span 
\[\begin{tikzcd}
    \bullet & \bullet \ar[l, dotted, red] & \bullet \ar[l, tail, red] \ar[r, dotted, red]& \bullet
\end{tikzcd}\]
is a \emph{refinement} of the DMC-span 
\[\begin{tikzcd}
    \bullet & \bullet \ar[l, dotted, blue] & \bullet \ar[l, tail, blue] \ar[r, dotted, blue]& \bullet
\end{tikzcd}\]
if there exists a commuting diagram 
\[\begin{tikzcd}
& \bullet \ar[d, dotted]\ar[dl, dotted, red]& \bullet \ar[d, dotted] \ar[l, tail, red] \ar[dr, dotted, red] & \\
\bullet & \ar[l, dotted, blue]\bullet & \bullet \ar[l, tail, blue] \ar[r, dotted, blue]& \bullet 
\end{tikzcd}\]
in $\cal{W}(\cal{A}_{hG})$.
\end{definition}

\begin{definition}\label{commonsub}
    We say that two DMC-spans: 
   \[\begin{tikzcd}
       \{a_{i}\}_{i \in I} & \{\Phi_j b_{j}\}_{j \in J} \ar[l, dotted, "\{\phi_j\}"']& \{ b_{j}\}_{j \in J} \ar[l, tail, "\{\Phi_j\}"'] \ar[r, dotted, "\{\psi_j\}"] & \{c_{k}\}_{k \in K}\\
      \{a_{i}\}_{i \in I} & \{\Phi'_j b'_{j}\}_{j \in J'} \ar[l, dotted, "\{\phi'_j\}"']& \{ b'_{j}\}_{j \in J'} \ar[l, tail, "\{\Phi'_j\}"'] \ar[r, dotted, "\{\psi'_j\}"] & \{c_{k}\}_{k \in K}
   \end{tikzcd}\]
    \emph{differ by a common subdivision} if there exists a commuting diagram of the form: 
    \[\begin{tikzcd}
      & \{\Phi_j b_{j}\}_{j \in J} \ar[dl, dotted, "\{\phi_j\}"']& \{ b_{j}\}_{j \in J} \ar[l, tail, "\{\Phi_j\}"'] \ar[rd, dotted, "\{\psi_j\}"] & \\
      \{a_{i}\}_{i \in I}  &\{\Xi_l x_l\}_{l \in L} \ar[u, dotted, "\{\gamma_l\}"] \ar[d, dotted, "\{\theta_l\}"']&\{x_l\}_{l \in L} \ar[l, tail, "\{\Xi_l\}"'] \ar[u, dotted, "\{\omega_l\}"] \ar[d, dotted, "\{\lambda_l\}"']& \{c_k\}_{k \in K}\\
    & \{\Phi'_j b'_{j}\}_{j \in J'} \ar[ul, dotted, "\{\phi'_j\}"]& \{ b'_{j}\}_{j \in J'}. \ar[l, tail, "\{\Phi'_j\}"'] \ar[ur, dotted, "\{\psi'_j\}"'] & 
   \end{tikzcd}\]
    
 \end{definition}

\begin{proposition}\label{dmcrep}
    Two DMC-spans representing the same morphism in $\cal{G}(\cal{A})$ have a common subdivision. 
\end{proposition}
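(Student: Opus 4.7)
The plan is to convert each DMC-span into an ordinary span in $\cal{W}(\cal{A}_{hG})$, invoke the calculus of fractions to obtain an intermediate object witnessing their equality in $\cal{G}(\cal{A}_{hG})$, and then reconstruct a common subdivision using the double-category structure of \cref{doublecatstructure}. Explicitly, each DMC-span
\[
\{a_i\} \xleftarrow{\{\phi_j\}} \{\Phi_j b_j\} \leftarrowtail \{b_j\} \xrightarrow{\{\psi_j\}} \{c_k\}
\]
compresses to an ordinary span $\{a_i\} \leftarrow \{b_j\} \rightarrow \{c_k\}$ by composing the leftmost covering sub-map with the move. Since both DMC-spans represent the same morphism in $\cal{G}(\cal{A}_{hG})$, which admits a calculus of fractions by \cref{assemblerfractions}, there exist an object $\{x_l\}_{l \in L}$ of $\cal{W}(\cal{A}_{hG})$ and morphisms $u\colon \{x_l\} \to \{b_j\}$ and $v\colon \{x_l\} \to \{b'_{j'}\}$ whose composites agree both down to $\{a_i\}$ and across to $\{c_k\}$.

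Next I would apply \cref{factoring} to factor $u$ and $v$ uniquely (up to isomorphism) as a move followed by a covering sub-map. After refining via axiom~(R) for $\cal{A}_{hG}$ (verified in \cref{prop:htpyorbitwassm}) and applying \cref{nomovesquare} to absorb any covering-sub-map discrepancies between the two factorizations, one can arrange both factorizations to share a single move part $\{\Xi_l\}\colon \{x_l\} \to \{\Xi_l x_l\}$, reducing $u$ and $v$ to the covering sub-maps $\{\omega_l\}$ and $\{\lambda_l\}$ appearing in \cref{commonsub}. Composing $\{\omega_l\}$ and $\{\lambda_l\}$ with the moves $\{\Phi_j\}$ and $\{\Phi'_{j'}\}$ of the original DMC-spans produces the dissected covering sub-maps $\{\gamma_l\}\colon \{\Xi_l x_l\} \to \{\Phi_j b_j\}$ and $\{\theta_l\}\colon \{\Xi_l x_l\} \to \{\Phi'_{j'} b'_{j'}\}$. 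The four outer triangles of \cref{commonsub} then commute by combining the calculus-of-fractions commutativity at $\{a_i\}$ and at $\{c_k\}$ with \cref{completingsquares}, which uniquely fills in the remaining 2-cells.

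The hard part will be the simultaneous alignment of the two move parts. The original DMC-spans carry a priori independent families of group elements $\{\Phi_j\}$ and $\{\Phi'_{j'}\}$, and the intermediate move $\{\Xi_l\}$ must be chosen to be compatible with both so that the triangles involving $\{\gamma_l\}$ and $\{\theta_l\}$ close on the dissection side. The key ingredient making this possible is the $G$-equivariance of the covering structure established in the proof of \cref{prop:htpyorbitwassm}: a refinement of a cover at the $\{b_j\}$-level transports to a refinement at the $\{\Phi_j b_j\}$-level. The remaining step is a careful diagram chase in the double category $\cal{W}^\square(\cal{A}_{hG})$, which I would isolate as a separate lemma in a full write-up.
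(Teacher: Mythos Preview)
Your approach via the calculus-of-fractions witness is viable and genuinely different from the paper's.  The paper instead builds the intermediate object constructively: it first completes the right-hand cospan of covering sub-maps $\{b_j\}\dashrightarrow\{c_k\}\dashleftarrow\{b'_{j'}\}$ to a square via \cref{nomovesquare}, then pushes the moves through with \cref{completingsquares}; at that stage only the right-hand square commutes in $\cal{W}(\cal{A}_{hG})$, and a further refinement (using an explicit equalizing morphism) is needed to force the left-hand triangle to commute there as well.  Your route trades that last refinement for the fact that the span-equivalence in the category of fractions already hands you $u,v$ making \emph{both} triangles commute in $\cal{W}(\cal{A}_{hG})$ from the outset.

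However, you have misidentified the difficulty, and your description of the middle move $\{\Xi_l\}$ is inconsistent with the diagram in \cref{commonsub}.  The step you flag as the ``hard part'' is in fact automatic.  Because the right legs $\{\psi_j\}$ and $\{\psi'_{j'}\}$ are covering sub-maps, they carry trivial group component; hence the equality $\{\psi_j\}\circ u=\{\psi'_{j'}\}\circ v$ in $\cal{W}(\cal{A}_{hG})$ forces the group components of $u$ and $v$ to agree indexwise.  Absorbing this common move into the object $\{x_l\}$ turns $u,v$ into honest covering sub-maps $\{\omega_l\},\{\lambda_l\}$ with no appeal to (R) or \cref{nomovesquare}.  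The middle move in \cref{commonsub} is then \emph{not} the (now-absorbed) common move part of $u,v$, but rather $\Xi_l:=\Phi_{\underline{\omega}(l)}$; the left-side equality $(\phi_j,\Phi_j)\circ u=(\phi'_{j'},\Phi'_{j'})\circ v$ forces $\Phi_{\underline{\omega}(l)}=\Phi'_{\underline{\lambda}(l)}$ and simultaneously supplies the commutativity of the left triangle with $\gamma_l=\Phi_{\underline{\omega}(l)}(\omega_l)$ and $\theta_l=\Phi'_{\underline{\lambda}(l)}(\lambda_l)$.  No separate lemma or diagram chase is required.
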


\begin{proof}
Given a diagram of two DMC-spans as in \cref{commonsub}  which commute in $\cal{G}(\cal{A})$ we can complete the cospan of covering sub-maps on the right to a square of covering sub-maps by \cref{nomovesquare}: 

\[
\begin{tikzcd}
    & \{\Phi_j b_{j}\}_{j \in J} \ar[dl, dotted, "\{\phi_j\}"']& \{ b_{j}\}_{j \in J} \ar[l, tail, "\{\Phi_j\}"'] \ar[rd, dotted, "\{\psi_j\}"] & \\
      \{a_{i}\}_{i \in I}  & &\{x_l\}_{l \in L}  \ar[u, dotted, "\{\eta_l\}"] \ar[d, dotted, "\{\lambda_l\}"']& \{c_k\}_{k \in K}\\
    & \{\Phi'_j b'_{j}\}_{j \in J'} \ar[ul, dotted, "\{\phi'_j\}"]& \{ b'_{j}\}_{j \in J'}. \ar[l, tail, "\{\Phi'_j\}"'] \ar[ur, dotted, "\{\psi'_j\}"'] & 
\end{tikzcd}
\]
The above diagram commutes in $\cal{G}(\cal{A})$since the outer hexagon commutes and the square on the right commutes. 
\par Now we use \cref{completingsquares} to complete the covering sub-maps followed by moves to squares obtaining a commuting diagram of the form:
\[\begin{tikzcd}
    & \{\Phi_j b_{j}\}_{j \in J} \ar[ddl, dotted, "\{\phi_j\}"']& \{ b_{j}\}_{j \in J} \ar[l, tail, "\{\Phi_j\}"'] \ar[rdd, dotted, "\{\psi_j\}"] & \\
    &\{\Xi_l x_l\}_{l \in L}  \ar[u, dotted, "\{\delta_l\}"]& &\\
      \{a_{i}\}_{i \in I}  & &\{x_l\}_{l \in L}  \ar[uu, dotted, "\{\eta_l\}"'] \ar[dd, dotted, "\{\lambda_l\}"] \ar[ul, tail, "\{\Xi_l\}"'] \ar[dl, tail, "\{\Theta_l\}"]& \{c_k\}_{k \in K}\\
      &\{\Theta_l x_l\}_{l \in L}  \ar[uu, tail, "\{\Xi_l \Theta^{-1}_l\}"]\ar[d,dotted, "\{\omega_l\}"] &&\\
    & \{\phi'_j b'_{j}\}_{j \in J'} \ar[uul, dotted, "\{\phi'_j\}"]& \{ b'_{j}\}_{j \in J'}. \ar[l, tail, "\{\Phi'_j\}"'] \ar[uur, dotted, "\{\psi'_j\}"'] &
\end{tikzcd}\]
The triangle on the left commutes in $\cal{G}(\cal{A})$: 
\[
(\phi'_{\underline{\omega}(l)}\omega_l,1_G)=(\phi_{\underline{\delta}(l)}\delta_l,\Xi_l \Theta^{-1}_l).
\]
Assuming that all empty covers have been removed, this implies $1_G = \Xi_l \Theta_l^{-1} $ and therefore $\{\Xi_l x_l\}=\{\Theta_l x_l\}$. An updated diagram is displayed below.
\[
\begin{tikzcd}
    & \{\Phi_j b_{j}\}_{j \in J} \ar[dl, dotted, "\{\phi_j\}"']& \{ b_{j}\}_{j \in J} \ar[l, tail, "\{\Phi_j\}"'] \ar[rd, dotted, "\{\psi_j\}"] & \\
      \{a_{i}\}_{i \in I}  & \{\Xi x_l\}\ar[u, dotted, "\{\delta_l\}"] \ar[d, dotted, "\{\omega_l\}"']&\{x_l\}_{l \in L}  \ar[l, tail, "\{\Xi_l\}"']\ar[u, dotted, "\{\eta_l\}"] \ar[d, dotted, "\{\lambda_l\}"']& \{c_k\}_{k \in K}\\
    & \{\Phi'_j b'_{j}\}_{j \in J'} \ar[ul, dotted, "\{\phi'_j\}"]& \{ b'_{j}\}_{j \in J'} \ar[l, tail, "\{\Phi'_j\}"'] \ar[ur, dotted, "\{\psi'_j\}"'] & 
\end{tikzcd}
\]

\par Now we make a refinement to ensure the triangle on the left commutes in $\cal{W}(\cal{A})$. Since the triangle commutes in $\cal{G}(\cal{A})$ there must a morphism $(\gamma_l,\Gamma_l)_{l \in L'}$which equalizes the arrows in $\cal{W}(\cal{A})$. Using \cref{factoring}, we factor the equalizing arrow into a covering sub map $\{\Gamma_l\}_{l \in L'}$ followed by a a move $\{\gamma_l\}_{l \in L'}$. After this we may complete the resulting cospan by \cref{completingsquares} .
\[\begin{tikzcd}
&\{x_l\}_{l \in L} \ar[d, "\{\Xi_l\}", tail]&\{y_l\}_{l \in L'} \ar[d, "\{\Xi_l\}", tail] \ar[l, dotted]&\\
    \{a_i\}_{i \in I} & \{\Xi_l x_l\}_{l \in L} \ar[l, "\{\phi'_{\underline{\omega}(l)} \circ \omega_l\} "shift left=.75ex] \ar[l, "\{\phi_{\underline{\delta}(l)}\} "',shift right = .75ex ] & \{\Gamma_l y_l\}_{l \in L'} \ar[l, "\{\gamma_l\}", dotted] & \{\Gamma_l y_l\}_{l \in L'} \ar[l, "\{\Gamma_l\}", tail]
\end{tikzcd}\]

Modifying the diagram by putting $\{Y_l\}$ in place of $\{X_l\}$ forces the triangle on the left to commute.
\[
\begin{tikzcd}
    & \{\phi_j b_{j}\}_{j \in J} \ar[dl, dotted, "\{\phi_j\}"']& \{ b_{j}\}_{j \in J} \ar[l, tail, "\{\Phi_j\}"'] \ar[rd, dotted, "\{\psi_j\}"] & \\
      \{a_{i}\}_{i \in I}  &\{\Xi_l y_l\}_{l \in L'}\ar[u, dotted, "\{\delta_l\}"
      ] \ar[d, dotted, "\{\omega_l\}"]&\{y_l\}_{l \in L'}  \ar[l, tail, "\{\Xi_l\}"']\ar[u, dotted, "\{\eta_l\}"] \ar[d, dotted, "\{\lambda_l\}"']& \{c_k\}_{k \in K}\\
    & \{\Phi'_j b'_{j}\}_{j \in J'} \ar[ul, dotted, "\{\phi'_j\}"]& \{ b'_{j}\}_{j \in J'} \ar[l, tail, "\{\Phi'_j\}"'] \ar[ur, dotted, "\{\psi'_j\}"'] & 
\end{tikzcd}
\]
Therefore the given DMC-spans differ by a common subdivision. 
\end{proof}

\begin{section}{The regulator for group completion $K$-theory}
Now we will review the construction of the regulator $\cal{W}(\cal{A}_{hG}) \rightarrow (HA)_{hG}$ defined in \cite{bgmmz} .  Our aim is to extend this map to $\cal{G}(\cal{A}_{hG})$, proving \cref{theorem:regulatorextension}.
\begin{definition}[{\cite[Definition 2.11]{bgmmz}}]\label{def:cat_wedge}
     For $X$ a finite based set and $\cal{A}$ a category we define $X \sma \cal{A}=\cal{A}^{\vee X^\circ}$, where $X^\circ$ is $X$ with the basepoint removed. If $M$ is an abelian group $X \sma M=M^{\oplus X^\circ}$.
\end{definition}
\begin{definition}[{\cite[Definitions 6.1, 6.11 ]{bgmmz}}]\label{regulatorW}
    
 Recall that given a $G$-weak assembler $\cal{A}$, or more generally a $G$-category with covering families, a \emph{measure} on $\cal{A}$ is a $G$-module $A$ and a function $\mu$ satisfying one of the following equivalent conditions: 

\begin{itemize}
\item A $G$-equivariant function $\mu: \ob \cal{A} \rightarrow A$ such that $\mu(P)= \sum_{i \in I} \mu(a_i)$ for every covering family $\{a_i \rightarrow a\}_{i \in I}$ in $\cal{A}$. 
\item A map of $\bb{Z}[G]$-modules $\mu: K_0 (\cal{A}) \rightarrow A$.
\end{itemize}
The \emph{explicit trace} is the map $K_n (\cal{A}_{hG}) \rightarrow H_n(G;A)$ arising from a measure. We call this map the \emph{regulator} or $\mu$-\emph{regulator} if specificity is desired. The regulator obtained from the identity on $K_0(\cal{A})$ is the \emph{universal regulator}. 
\par Let $N_\bullet ^\otimes $ be the two-sided bar construction with the tensor product.  The regulator $R$ is defined in \cite{bgmmz} as a map of simplicial sets,
\[ 
R: N_\bullet \cal{W}(\cal{A}_{hG}) \rightarrow N_\bullet ^\otimes (\bb{Z} , \bb{Z}[G], A) 
\]
defined by sending the $p$-tuple 
\[
 \{a_{0|i}\}_{i \in I_0} \xleftarrow{(\phi_{1|i},\Phi_{1|i})} \{a_{1|i}\}_{i \in I_1} \xleftarrow{(\phi_{2|i}, \Phi_{2|i})} \dots \xleftarrow{(\phi_{p|i}, \Phi_{p|i})} \{a_{p|i}\}_{i \in I_p}
\]
to the sum  
\[ 
\sum_{i \in I_p} \Phi_{1|\underline{\phi_2}\dots \underline{\phi_p} (i)} \otimes \Phi_{2|\underline{\phi_3} \dots \underline{\phi_p} (i)} \otimes \dots \otimes \Phi_{p-1| \underline{\phi_p} (i)} \otimes \Phi_{p|i} \otimes \mu(a_{p|i})
\]
in $C_p (G;A)=\bb{Z}[G]^{\otimes p} \otimes A$.

By \cite[Lemma 6.10]{bgmmz} the regulator generalizes to a map of simplicial sets: 
\[ 
R_X : N_\bullet \cal{W}(X \sma \cal{A}_{hG}) \rightarrow X \otimes N_\bullet ^\otimes (\bb{Z}, \bb{Z}[G], A).
\]
 The geometric realization of this map $|R_X|$ defines a map of special $\Gamma$-spaces, and therefore a map on symmetric spectra: 
\[
K(\cal{A}_{hG}) \xrightarrow{r} |N_{\bullet}^\otimes (\bb{Z}, \bb{Z}[G], A)|\simeq (HA)_{hG}
\]
\end{definition}
\end{section}
\par For our purposes, we will construct a regulator map out of the nerve of the localization of $\cal{W}(\cal{A}_{hG})$ in the case that $\cal{A}_{hG}$ is a weak assembler. In this case, $\cal{A}_{hG}$ is also a weak assembler \cref{prop:htpyorbitwassm}.  
For $\cal{A}$ a weak assembler we define $\cal{G}(\cal{A})=\cal{W}(\cal{A})[\cal{W}(\cal{A})^{-1}]$, the groupoid constructed from $\cal{A}_{hG}$. There is a localization map $L:\cal{W}(\cal{A}_{hG} )\rightarrow \cal{G}(\cal{A}_{hG})$, which in the span model for the completion maps $c \xleftarrow{f} c'$ to the span $c \xleftarrow{f} c' \xrightarrow{\id} c'$.   
\par Our aim is to construct a regulator
\[R^\cal{G}: N_\bullet \cal{G}(\cal{A}_{hG}) \rightarrow N^\otimes _\bullet (\bb{Z}, \bb{Z}[G], A) \]
such that $R^\cal{G}$ extends the regulator $R$ defined in \cref{regulatorW}: $R = R^\cal{G} \circ N_\bullet (L)$. 

\begin{definition}\label{definition:viaduct}
 We call a flag of DMC-spans of length $m$ a \emph{viaduct of length m} if the dissection of the $i$-th DMC-span agrees with the $(i+1)$-st covering sub-map DMC-span.  We write a length $m$ viaduct: 
 \[ \begin{tikzcd}
 \{b_{0|j}\}_{j \in J}   \ar[d, dotted, "\{ \psi_{0|j}\}"]& \{ b_{1|j} \}_{j \in J }\ar[l, tail, "\{\Phi_{1|j}\}"']\ar[d, dotted, "\{\psi_{1|j}\}"] & \ar[l, tail, "\{\Phi_{2|j}\}"'] \dots 
 & \ar[l, tail, "\{\Phi_{m-1|j}\}"'] \{b_{m-1| j}\}_{j \in J}\ar[d, dotted, "\{\psi_{m-1|j}\}"]& \ar[l, tail, "\{\Phi_{m|j}\}"'] \{b_{m | j}\}_{j \in J}\ar[d, dotted, "\{\psi_{m |j}\}"] \\
 \{a_{0|i}\}_{i \in I_0}   &  \{ a_{1|i} \}_{i \in I_1}           &         &   \{a_{m-1| i}\}_{i \in I_{m-1}}         &     \{a_{m | i}\}_{i \in I_m}
 \end{tikzcd} \]
 The vertical dotted arrows are the dissections and covering sub-maps which coincide. 
 \end{definition}
 
\begin{remark}
In the notation used for the viaduct we have $b_{k|j}=\Phi_{k+1|j} \Phi_{k+2|j} \dots \Phi_{m|j} b_{m|j}$. 
\end{remark}
\par  We extend the notion of refinement and common refinement from DMC-spans to viaducts. When no confusion may arise, we will declutter diagrams by omitting the names of morphisms and representing objects by dots. 
 \begin{definition}\label{viaductrefinement}
Given two viaducts of  length $m$: 
\[ \begin{tikzcd}
    \bullet \ar[d,red, dotted]& \bullet \ar[l,red, tail] \ar[d,red, dotted]& \dots \ar[l,red, tail] & \bullet \ar[l,red,tail] \ar[d,red,dotted]& \bullet\ar[l,red,tail] \ar[d,red,dotted]\\
    \bullet&\bullet&&\bullet&\bullet
\end{tikzcd}\]
\[ \begin{tikzcd}
    \bullet \ar[d, blue, dotted]& \bullet \ar[l,blue,tail] \ar[d,blue,dotted]& \dots \ar[l,blue,tail] & \bullet \ar[l,blue,tail] \ar[d,blue,dotted]& \bullet\ar[l,blue,tail] \ar[d,blue,dotted]\\
    \bullet&\bullet&&\bullet&\bullet
\end{tikzcd}\]
we say that the red viaduct \emph{refines} the blue viaduct if there are black covering maps making the following diagram commute: 
\[\begin{tikzcd}
   \bullet \ar[d, red, dotted] \ar[dd, dotted, bend left = 55]& \bullet \ar[l, red, tail]\ar[d, red, dotted] \ar[dd, dotted, bend left = 55] & \dots \ar[l, red, tail]&\bullet \ar[l, red, tail] \ar[d, red, dotted] \ar[dd, dotted, bend left = 55]& \bullet \ar[l, red, tail] \ar[d, red, dotted] \ar[dd, dotted, bend left = 55]\\
   \bullet & \bullet &  & \bullet & \bullet \\
   \bullet \ar[u, blue, dotted]&\ar[l, blue, tail] \ar[u, blue, dotted] \bullet &\ar[l, blue, tail]\dots& \bullet\ar[l, blue, tail] \ar[u, blue, dotted]& \bullet \ar[l, blue, tail] \ar[u, blue, dotted]
\end{tikzcd}\] 
This requires that the red and blue covering maps share the same codomains. 
 \end{definition}
 \begin{definition}\label{viaductcommonsubdivision}
 We say that the red and blue viaducts: 
 \[ \begin{tikzcd}
     \bullet \ar[d, red, dotted] & \bullet \ar[l, red, tail]\ar[d, red, dotted] & \dots \ar[l, red, tail]&\bullet \ar[l, red, tail] \ar[d, red, dotted] & \bullet \ar[l, red, tail] \ar[d, red, dotted] \\
   \bullet & \bullet &  & \bullet & \bullet \\
   \bullet \ar[u, blue, dotted]&\ar[l, blue, tail] \ar[u, blue, dotted] \bullet &\ar[l, blue, tail]\dots& \bullet\ar[l, blue, tail] \ar[u, blue, dotted]& \bullet \ar[l, blue, tail] \ar[u, blue, dotted]
 \end{tikzcd}\]
 \emph{differ by a common subdivision} if  there are objects and morphisms, represented by black arrows, yielding a commutative  diagram: 
 \[\begin{tikzcd}
     \bullet \ar[dd, red, dotted, bend right = 35]& \bullet \ar[dd, red, dotted, bend right = 35] \ar[l, red, tail]& \dots \ar[l, red, tail]& \bullet \ar[l,red,tail]\ar[dd, red, dotted, bend right = 35]& \bullet \ar[dd, red, dotted, bend right = 35] \ar[l,red, tail]\\
     \bullet \ar[dd, dotted, bend left= 35] \ar[u, dotted]& \bullet \ar[l, tail, crossing over] \ar[dd, dotted, bend left= 35]\ar[u, dotted]& \ar[l, tail]\dots & \bullet \ar[dd, dotted, bend left= 35]\ar[u, dotted]\ar[l, tail, crossing over]& \bullet \ar[dd, dotted, bend left= 35]\ar[u, dotted]\ar[l, tail, crossing over]\\
     \bullet & \bullet &  & \bullet & \bullet \\
     \bullet \ar[u, blue, dotted] & \bullet \ar[u, blue, dotted] \ar[l, blue, tail]& \dots \ar[l, blue, tail] & \bullet \ar[u, blue, dotted] \ar[l, blue, tail]& \bullet \ar[u, blue, dotted] \ar[l, blue, tail]
 \end{tikzcd}\] 
 \end{definition}
 
 \begin{lemma}\label{lemma:flagrepex}
 Each flag in $\cal{G}(\cal{A}_{hG})$ is represented by a viaduct.
\end{lemma}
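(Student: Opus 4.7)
The plan is to induct on the length $m$ of the flag. For $m=1$ the statement is immediate: a single morphism in $\cal{G}(\cal{A}_{hG})$ is represented by a DMC-span via \cref{prop:rep}, and a single DMC-span is by definition a viaduct of length one.

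For the inductive step, I would apply the inductive hypothesis to the initial length-$(m-1)$ portion of the flag, producing a viaduct whose rightmost column is a covering sub-map $\{b_{m-1|j}\}_{j \in J} \to \{a_{m-1|i}\}$. Represent the $m$-th morphism by a DMC-span $\{a_{m-1|i}\} \leftarrow C \leftarrow D \to \{a_{m|i}\}$ via \cref{prop:rep} (with the outer arrows covering sub-maps and the middle arrow a move). The dissection of this DMC-span and the final covering sub-map of the viaduct are then two covers of $\{a_{m-1|i}\}$ in $\cal{A}_{hG}$; since $\cal{A}_{hG}$ is a weak assembler by \cref{prop:htpyorbitwassm}, axiom \ref{enum:wasm-refinement} yields a common refinement $W$ dominating both.

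The refinement $W$ is then propagated into each piece and the two pieces are concatenated. On the viaduct side, the remark following \cref{definition:viaduct} exhibits each $\{b_{k|j}\}$ as the $G$-translate $\Phi_{k+1|j}\cdots\Phi_{m-1|j}\{b_{m-1|j}\}$, so transporting the cover $W \to \{b_{m-1|j}\}$ along these group elements produces a refinement of every $\{b_{k|j}\}$ indexed by a single new set, compatible with the top-row moves and inducing refined covering sub-maps into the unchanged $\{a_{k|i}\}$. On the DMC-span side, invert the move $D \to C$ and apply \cref{completingsquares} to the resulting composite $W \to C \to D$, obtaining $D'$ together with a move $D' \to W$ and a covering sub-map $D' \to D$; the refined DMC-span $\{a_{m-1|i}\} \leftarrow W \leftarrow D' \to \{a_{m|i}\}$ takes the composite $D' \to D \to \{a_{m|i}\}$ as its covering sub-map. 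Since the refined viaduct's final covering sub-map and the refined DMC-span's dissection agree on $W \to \{a_{m-1|i}\}$, the two pieces concatenate into a viaduct of length $m$.

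What remains is to check that this viaduct represents the original flag: each refinement step replaces a DMC-span by another DMC-span expressing the same morphism in $\cal{G}(\cal{A}_{hG})$, the refining squares themselves furnishing common subdivisions in the sense of \cref{dmcrep}. The main obstacle I anticipate is the indexing-set bookkeeping on the viaduct side, namely verifying that transporting $W$ via the $G$-action really assembles into a legitimate viaduct with a single indexing set along the entire top row and with every vertical covering sub-map correctly restricting to a cover of the corresponding $\{a_{k|i}\}$. This follows from the $G$-equivariance of the covering-family structure on $\cal{A}$ but demands careful notation.
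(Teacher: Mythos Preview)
Your proposal is correct and follows essentially the same approach as the paper: induct on the length, use \cref{prop:rep} for the base case, take a common refinement of the two covers meeting at $\{a_{m-1|i}\}$ (the paper invokes \cref{nomovesquare}, which is the same as your appeal to axiom~\ref{enum:wasm-refinement}), push the refinement through the move via \cref{completingsquares}, and transport it back through the existing viaduct along the $G$-action. The only organizational difference is that the paper isolates your ``indexing-set bookkeeping'' step as a standalone preliminary claim with explicit formulas for the transported group elements $\Xi_{k|l}=\Phi_{k|\underline{\phi_k}\cdots\underline{\phi_n}\,\underline{\xi}(l)}$ and covering sub-maps, then simply cites it in the inductive step---exactly the careful notation you anticipated needing.
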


\begin{proof}  First we show that given a viaduct $b$ as in \cref{definition:viaduct}  and a  covering sub-map $\{\xi_l\}:\{x_l\}_{l \in L} \dashrightarrow \{b_{n|j}\}$ of the last tuple in the viaduct, we may construct an equivalent viaduct with $\{x_l\}$ in place of $\{b_{n|j}\}$. Let $\\underline{x}i: L \rightarrow  J$ denote the set map. For $1\leq k \leq n$ and $l \in L$, define 
\begin{align*}
 \Xi_{k|l}&=\Phi_{k|\phi_k \dots \phi_n \underline{\xi} (l)} \\
 x_{n|l}&=X_l \\
x_{k|l} &= \Xi_{k+1|l} \Xi_{k+2|l} \dots \Xi_{n|l} x_{n|l}, k <n.
\end{align*}
Similarly, we can define sub-covers $\{\eta_{k|l}\}: \{x_{k|l}\}_{l \in L} \dashrightarrow \{a_{k|i}\}_{i \in I_k}$ with set maps given by composition, $\eta_k = \psi_k \circ \xi: L \rightarrow I_k$ and $\eta_{k|l}=\psi_{k|\eta_{k} (l)}$. By construction, the viaduct 
 \[ \begin{tikzcd}
 \{x_{0|l}\}_{l \in L}   \ar[d, dotted, "\{ \eta_{0|l}\}"]& \{ x_{1|l} \}_{l \in L }\ar[l, tail, "\{\Xi_{1|l}\}"']\ar[d, dotted, "\{\eta_{1|l}\}"] & \ar[l, tail, "\{\Xi_{2|l}\}"'] \dots 
 & \ar[l, tail, "\{\Xi_{m-1|l}\}"'] \{x_{m-1| l}\}_{l \in L}\ar[d, dotted, "\{\eta_{m-1|l}\}"]& \ar[l, tail, "\{\Xi_{m|l}\}"'] \{x_{m | l}\}_{l \in L}\ar[d, dotted, "\{\eta_{m |l}\}"] \\
 \{a_{0|i}\}_{i \in I_0}   &  \{ a_{1|i} \}_{i \in I_1}           &         &   \{a_{m-1| i}\}_{i \in I_{m-1}}         &     \{a_{m | i}\}_{i \in I_m}
 \end{tikzcd} \]
 is equivalent to the one given. 
 \par Now we give an inductive argument that every flag of length $(m+1)$ is represented by a viaduct. The base case is Proposition 1.6. Now we assume that the statement holds up to $m$. Given a $(m+1)$-flag of morphisms in $\cal{G}(\cal{A})$ we may reduce the first $m$-morphisms into the form of a viaduct.  Using the notation from the statement of this lemma for the first $m$ morphisms and denote the $(m+1)$-st morphism using objects $\{r_k\}_{k \in K}, \{z_l\}_{l \in L}$, and $\{y_s\}_{s \in S}$.
  \[ \begin{tikzcd}
 \{b_{0|j}\}_{j \in J}   \ar[d, dotted, "\{ \psi_{0|j}\}"]& \ar[l,tail, "\{\Phi_{1|j}\}"'] \dots 
 & \ar[l, tail, "\{\Phi_{m|j}\}"'] \{b_{m | j}\}_{j \in J}\ar[d, dotted, "\{\psi_{m |j}\}"] &&&\\
 \{a_{0|i}\}_{i \in I_0}            &               &     \{a_{m | i}\}_{i \in I_m} & \{r_k\}_{k \in K} \ar[l, dotted]& \{z_{k}\}_{k\in K} \ar[l, tail] \ar[r, dotted] & \{y_s\}_{s \in S} 
 \end{tikzcd} \]
 First complete $\{b_{m|j}\}_{j \in J} \dashrightarrow \{a_{m|i}\}_{i \in I_m} \dashleftarrow \{r_k\}_{k \in K}$ via  \cref{nomovesquare}  to a square  using an object $\{x_l\}_{l \in L}$. 
 Then complete the diagram $\{x_l\}_{l \in L} \dashrightarrow \{r_k\}_{k \in K} \leftarrowtail \{z_t\}_{t \in T}$ to a square by an object $\{w_l\}_{l \in L}$ \cref{completingsquares}.  This produces the diagram 
  \[ \begin{tikzcd}
 \{b_{0|j}\}_{j \in J}   \ar[d, dotted, "\{ \psi_{0|j}\}"]& \ar[l,tail, "\{\Phi_{1|j}\}"'] \dots 
 & \ar[l, tail, "\{\Phi_{m|j}\}"'] \{b_{m | j}\}_{j \in J}\ar[d, dotted, "\{\psi_{m |j}\}"] &\{x_l\}_{l \in L} \ar[l, dotted] \ar[d, dotted]&\{w_l\}_{l \in L} \ar[l, tail] \ar[d, dotted]&\\
 \{a_{0|i}\}_{i \in I_0}            &               &     \{a_{m | i}\}_{i \in I_m} & \{r_k\}_{k \in K} \ar[l, dotted]& \{z_{k}\}_{k\in K} \ar[l, tail] \ar[r, dotted] & \{y_s\}_{s \in S} 
 \end{tikzcd} \]
 By our first observation, use the  subcover $\{b_{m|j}\}_{j \in J} \dashleftarrow \{x_l\}_{l \in L}$ to rewrite the diagram
 \[ \begin{tikzcd}
 \{x_{0|l}\}_{l \in L}   \ar[d, dotted, "\{ \eta_{0|l}\}"] & \ar[l, tail, "\{\Xi_{1|l}\}"'] \dots 
 & \ar[l, tail, "\{\Xi_{m|l}\}"'] \{x_{m | l}\}_{l \in L}\ar[d, dotted, "\{\eta_{m |l}\}"] &\{w_l\}_{l \in L} \ar[l, tail] \ar[d, dotted]\\
 \{a_{0|i}\}_{i \in I_0}           &              &     \{a_{m | i}\}_{i \in I_m}& \{z_k\}_{k \in K}
 \end{tikzcd} \]
 producing a viaduct equivalent to the given flag of length $m+1$. 
 \end{proof}
 \begin{lemma}\label{lemma:flagrepunique}
     The representative of \cref{lemma:flagrepex} is unique up to common subdivision.
 \end{lemma}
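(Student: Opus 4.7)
The plan is to proceed by induction on the length $m$ of the viaduct. The base case $m = 1$ coincides with \cref{dmcrep}, since a viaduct of length one is a DMC-span.

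For the inductive step I would take two viaducts $V$ and $V'$ of length $m$ representing the same flag in $\cal{G}(\cal{A}_{hG})$ and truncate each to its first $m-1$ DMC-spans. The resulting viaducts represent the same shorter flag, so by the inductive hypothesis they admit a common subdivision $W$ of length $m-1$. The remaining task is to extend $W$ rightward by one DMC-span in a way that is compatible with $W$'s rightmost dissection and simultaneously refines the rightmost DMC-span of $V$ and of $V'$.

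To construct this extension I would first apply \cref{dmcrep} to the rightmost DMC-spans of $V$ and $V'$ to obtain a common subdivision DMC-span $D$. Next I would combine $D$ with the rightmost dissection of $W$ by applying \cref{nomovesquare} to the cospan of covering sub-maps with target $\{a_{m-1|i}\}_{i \in I_{m-1}}$, yielding a refined object that maps by covering sub-maps both to the apex of $D$ and to the rightmost vertex of $W$. Pulling back the move and the dissection of $D$ along this refinement using \cref{completingsquares}, in the same manner used in the inductive step of the proof of \cref{lemma:flagrepex}, produces a DMC-span that can be appended to $W$. By construction its covering sub-map leg agrees with the rightmost dissection of $W$, so the result is a viaduct of length $m$.

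The main obstacle is the compatibility bookkeeping: the dissection at position $m-1$ of the extended $W$ and the covering sub-map at position $m$ must coincide as part of the viaduct structure, not merely up to further refinement. This is why the pullback squares from \cref{nomovesquare} and \cref{completingsquares} are taken in this specific order. Any residual group-theoretic discrepancy between the two refinement maps to $V$ and $V'$ can be eliminated by the argument used at the end of the proof of \cref{dmcrep}, where the equality $\Xi_l \Theta_l^{-1} = 1_G$ is forced once empty covers are removed. Assembling these pieces yields a common subdivision of $V$ and $V'$ in the sense of \cref{viaductcommonsubdivision}.
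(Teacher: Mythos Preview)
Your approach is essentially the paper's: induct on the length, invoke \cref{dmcrep} for the base case and for the final DMC-span, apply the inductive hypothesis to the truncated viaducts, and then glue the two common subdivisions at position $m-1$ using \cref{nomovesquare} followed by \cref{completingsquares}. One small correction: the DMC-span you build has covering leg $Z \to \{a_{m-1|i}\}$, which \emph{refines} but does not equal the rightmost dissection of $W$, so you cannot append it to $W$ as stated; you must first push the refinement $Z \to W_{m-1}$ back through all of $W$ using the construction from the first paragraph of the proof of \cref{lemma:flagrepex} (the paper's proof leaves this propagation equally implicit).
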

 
\begin{proof} For uniqueness we induct on the length of the viaduct. The case $n=1$ is proved by \cref{dmcrep}. Now assume the result holds for all viaducts of length $k \leq n$. Suppose that the blue and red viaducts of length $n+1$ represent the black flag of spans. 
\[\begin{tikzcd}
\bullet \ar[d, dotted, red]& \ar[l, tail, red] \bullet \ar[d, dotted, red]& \ar[l, tail, red]\dots & \ar[l, tail, red]\bullet \ar[d, dotted, red]& \ar[l, tail, red]\bullet \ar[d, dotted, red]\\
 \bullet  & \ar[l, dotted, tail]\bullet & \ar[l, dotted, tail] \dots &  \ar[l, dotted, tail] \bullet & \ar[l, dotted, tail] \bullet   \\
 \bullet \ar[u, dotted, blue]& \ar[l, tail, blue]\bullet \ar[u, dotted, blue]& \ar[l, tail, blue]\dots &\ar[l, tail, blue] \bullet \ar[u, dotted, blue]& \ar[l, tail, blue]\bullet \ar[u, dotted, blue]
\end{tikzcd}
\]
By induction, the first $n$-morphisms of the viaducts differ by a common subdivision and by \cref{dmcrep} the last two DMC-spans have a common subdivision. The arrows defining the common subdivision of the length $n$ viaducts are purple and the common subdivision between the last two DMC-spans is cyan in the following diagram. 
 \[\begin{tikzcd}
     & \bullet \ar[dd, dotted, red] && \ar[ll, red, tail]\ar[dd, dotted, red]\bullet &\ar[l, tail, red] \dots&\ar[l, tail, red]  \bullet\ar[dd, red, dotted]&&\ar[ll, tail, red] \bullet\ar[dd, red, dotted]&\\
     \bullet\ar[ur, purple, dotted] \ar[ddr, purple, dotted]&&\ar[ll, tail, purple, crossing over]\bullet \ar[ur, dotted, purple] \ar[ddr, dotted, purple]&\ar[l, tail, purple] \dots& \ar[l, purple, tail] \bullet \ar[ur, purple, dotted] \ar[ddr, purple, dotted]&&\bullet \ar[ul, dotted, cyan] \ar[ddl, dotted, cyan]&&\ar[ll, tail, cyan, crossing over]\bullet \ar[ul, dotted, cyan]\ar[ddl, dotted, cyan]\\
     &\bullet&&\bullet &&\bullet&&\bullet &\\
     &\bullet \ar[u, blue, dotted] &&\ar[ll, blue, tail] \bullet \ar[u, blue, dotted]&\ar[l, blue, tail] \dots&\ar[l, blue, tail]\bullet \ar[u,dotted, blue]&&\ar[ll, tail, blue]\bullet\ar[u, blue, dotted]&
 \end{tikzcd}\] 
 Now we complete the cospans consisting of a cyan dotted arrow and a purple dotted arrow to a commutative square consisiting of submaps by \cref{nomovesquare}. The resulting covering submaps are shown in orange. 
 \[\begin{tikzcd}
    &\bullet \ar[ddl, orange, dotted]&&\ar[ll, tail, orange]\bullet\ar[ddl, dotted, orange, crossing over]&\ar[l, tail, orange]\dots&\ar[l, tail, orange]\bullet\ar[ddl, dotted,orange, crossing over]\ar[ddr, dotted, orange, crossing over]&&\ar[ll, orange, tail]\bullet\ar[ddr, orange, dotted]&\\
     & \bullet \ar[dd, dotted, red] && \ar[ll, red, tail]\ar[dd, dotted, red]\bullet &\ar[l, tail, red] \dots&\ar[l, tail, red]  \bullet\ar[dd, red, dotted]&&\ar[ll, tail, red] \bullet\ar[dd, red, dotted]&\\
     \bullet\ar[ur, purple, dotted] \ar[ddr, purple, dotted]&&\ar[ll, tail, purple, crossing over]\bullet \ar[ur, dotted, purple] \ar[ddr, dotted, purple]&\ar[l, tail, purple] \dots& \ar[l, purple, tail] \bullet \ar[ur, purple, dotted] \ar[ddr, purple, dotted]&&\bullet \ar[ul, dotted, cyan] \ar[ddl, dotted, cyan]&&\ar[ll, tail, cyan, crossing over]\bullet \ar[ul, dotted, cyan]\ar[ddl, dotted, cyan]\\
     &\bullet&&\bullet &&\bullet&&\bullet &\\
     &\bullet \ar[u, blue, dotted] &&\ar[ll, blue, tail] \bullet \ar[u, blue, dotted]&\ar[l, blue, tail] \dots&\ar[l, blue, tail]\bullet \ar[u,dotted, blue]&&\ar[ll, tail, blue]\bullet\ar[u, blue, dotted]&
 \end{tikzcd}\] 
 We then complete the resulting cospan consisting of an orange covering submap and a cyan move to a commuting square by \cref{completingsquares}.
\end{proof}

\begin{definition}
    We define the regulator  
\[R^\cal{G}: N_\bullet \cal{G}(\cal{C}_{hG}) \rightarrow N^\otimes _\bullet (\bb{Z}, \bb{Z}[G], A) \]
 by mapping the viaduct displayed in \cref{definition:viaduct} to the sum 
\[ \sum_{j \in J} \Phi_{1|j} \otimes \dots \otimes \Phi_{m|j} \otimes \mu(b_{m|j}). \] The regulator is well defined by \ref{lemma:flagrepunique}. 
\end{definition}
\begin{remark}
    We will use the notation 
    \[ \left[\Phi_{1|j} \mid \dots \mid \Phi_{m|j}\right] \otimes \mu(b_{m|j})=\Phi_{1|j} \otimes \dots \otimes \Phi_{m|j} \otimes \mu(b_{m|j}).\]
\end{remark}
\begin{lemma}\label{regatptsimplicial}
The regulator $R^\cal{G}$ defined above is simplicial.
\end{lemma}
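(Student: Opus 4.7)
I will verify compatibility of $R^\cal{G}$ with each face and degeneracy operator directly on viaduct representatives, using \cref{lemma:flagrepex} and \cref{lemma:flagrepunique} to work exclusively with viaducts. On $N_\bullet \cal{G}(\cal{A}_{hG})$ the face map $d_0$ drops the first morphism, $d_m$ drops the last, and the inner face $d_i$ composes the $i$-th and $(i+1)$-st morphisms; the degeneracies $s_i$ insert an identity morphism. On $N_\bullet^\otimes(\bb{Z},\bb{Z}[G],A)$ the matching operators are the augmentation on the first $\bb{Z}[G]$-factor, multiplication of adjacent group elements, the $G$-action on $A$ through the last factor, and insertion of $1_G$, respectively.

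The two outer faces will be immediate. The face $d_0$ removes the leftmost column of the viaduct, yielding a viaduct whose regulator sum is $\sum_{j\in J}[\Phi_{2|j}\mid\cdots\mid\Phi_{m|j}]\otimes\mu(b_{m|j})$, which agrees with applying the augmentation to the first factor of $R^\cal{G}$. For $d_m$, removing the rightmost column leaves a viaduct ending at $\{b_{m-1|j}\}$; since $b_{m-1|j}=\Phi_{m|j}b_{m|j}$ and $\mu$ is $G$-equivariant, one has $\mu(b_{m-1|j})=\Phi_{m|j}\mu(b_{m|j})$, which is exactly the $G$-action applied to $R^\cal{G}$.

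The inner face $d_i$ is where the argument has real content. The key point is that in a viaduct the covering sub-map $\psi_{i|j}$ of the $i$-th DMC-span coincides with the dissection of the $(i+1)$-st DMC-span, so the cospan that the category-of-fractions composition needs to complete is $\{b_{i|j}\}\xrightarrow{\psi_{i|j}}\{a_{i|\cdot}\}\xleftarrow{\psi_{i|j}}\{b_{i|j}\}$, which is trivially completed by the identity on $\{b_{i|j}\}$. The composite then collapses to the single DMC-span
\[\{a_{i-1|\cdot}\}\xleftarrow{\psi_{i-1|j}}\{b_{i-1|j}\}\xleftarrow{\Phi_{i|j}\Phi_{i+1|j}}\{b_{i+1|j}\}\xrightarrow{\psi_{i+1|j}}\{a_{i+1|\cdot}\},\]
producing a viaduct of length $m-1$ whose regulator sum replaces the $i$-th and $(i+1)$-st tensor factors by their product, exactly as required. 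The degeneracy $s_i$ is represented by the DMC-span of identities; inserting it adds a column with move $1_G$, which inserts the factor $1_G$ on the bar side.

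The main obstacle I expect is confirming that the composite DMC-span I described for $d_i$ is really a valid representative of the composed morphism in $\cal{G}(\cal{A}_{hG})$, rather than something that must be further subdivided. But any other choice of representative differs from this one by a common subdivision in the sense of \cref{lemma:flagrepunique}, and the formula defining $R^\cal{G}$ is invariant under such subdivisions since $\mu$ is additive over covering families. This ensures that $R^\cal{G}$ intertwines every face and degeneracy map, completing the verification that $R^\cal{G}$ is simplicial.
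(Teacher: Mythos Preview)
Your proposal is correct and follows exactly the approach the paper indicates: the paper's own proof is the single sentence ``We check that the map $R^\cal{G}$ respects the face and degeneracy maps,'' and you have carried out precisely that check in detail. Your handling of the inner faces via the coincidence of dissection and covering sub-map in a viaduct, and your appeal to \cref{lemma:flagrepunique} and additivity of $\mu$ for well-definedness, are the expected ingredients.
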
 
\begin{proof}
We check that the map $R^\cal{G}$ respects the face and degeneracy maps. 
\end{proof}

\begin{definition}\label{def:reg_pointed}
We define 
\[ R_X ^{\cal{G}} : N_\bullet (\cal{G}(X \sma \cal{C}_{hG}) \rightarrow X \otimes N_\bullet ^\otimes (\bb{Z}, \bb{Z}[G], A)\]
by the same formula as $R^\cal{G}$, with the modification that the term $b_{k|j}$ associated with $x \in X^\circ$ is sent to the summand of $X\otimes N_\bullet ^\otimes (\bb{Z}, \bb{Z}[G], A)$ associated with $x\in X^\circ$. 
\end{definition}
\begin{lemma}\label{lemma:regulator_natural}
    The map $R^\cal{G}_X$ is simplicial and natural in $X$. 
\end{lemma}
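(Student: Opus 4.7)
The plan is to reduce both claims to the already-established simpliciality of $R^{\cal{G}}$ in \cref{regatptsimplicial}, by exploiting the fact that $X \sma \cal{C}_{hG}=\cal{C}_{hG}^{\vee X^\circ}$ is a wedge of copies of $\cal{C}_{hG}$ indexed by $X^\circ$.

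First I would observe that, because the wedge of pointed categories identifies only basepoints, every morphism between non-basepoint objects in $X \sma \cal{C}_{hG}$ lies entirely inside a single summand. Consequently an object of $\cal{W}(X \sma \cal{C}_{hG})$ is a finite tuple whose pieces are tagged by elements of $X^\circ$, and both moves and covering sub-maps preserve the tag of each piece. A viaduct in $\cal{G}(X \sma \cal{C}_{hG})$ therefore breaks up canonically as a disjoint union of viaducts in $\cal{G}(\cal{C}_{hG})$, one for each $x \in X^\circ$ actually appearing, plus possibly some pieces which factor through the basepoint and are sent to zero. The face and degeneracy maps on $N_\bullet \cal{G}(X \sma \cal{C}_{hG})$ respect this decomposition, and under the formula defining $R^{\cal{G}}_X$ they correspond to the face and degeneracy maps on $X \otimes N_\bullet^\otimes(\bb{Z},\bb{Z}[G],A)$ acting one summand at a time. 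Applying \cref{regatptsimplicial} on each summand then yields simpliciality of $R^{\cal{G}}_X$.

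For naturality in $X$, I would let $f \colon X \to Y$ be a pointed map and check that the induced morphism of weak assemblers $f_* \colon X \sma \cal{C}_{hG} \to Y \sma \cal{C}_{hG}$ is compatible with the formula for $R^{\cal{G}}_X$. On the summand indexed by $x \in X^\circ$, $f_*$ is the inclusion into the summand indexed by $f(x)$ when $f(x)\neq *$, and collapses to the initial object otherwise. Propagating through $\cal{W}$, through the localization, and through $N_\bullet$ gives the left vertical of the naturality square; the right vertical is the induced relabeling of summands on $X \otimes N_\bullet^\otimes(\bb{Z}, \bb{Z}[G], A)$. Tracking the tag of each piece $b_{m|j}$ of a viaduct, the two composites produce the same term $[\Phi_{1|j}\mid\cdots\mid\Phi_{m|j}] \otimes \mu(b_{m|j})$, placed in the summand indexed by $f(x_j)$.

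The main bookkeeping hazard is the basepoint case. I would need to verify that when a piece gets collapsed to the initial object by $f_*$ or by a face map, the corresponding term of the regulator genuinely vanishes. This uses that $\mu(\varnothing)=0$, which follows from additivity applied to the empty cover of $\varnothing$ guaranteed by axiom \ref{enum:wasm-initial}, together with the convention that the basepoint summand of $X \otimes -$ is zero. Once these collapses are accounted for, the summand-wise reduction makes both assertions routine consequences of \cref{regatptsimplicial}.
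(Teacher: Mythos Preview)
Your proposal is correct and follows essentially the same approach as the paper: both reduce to applying \cref{regatptsimplicial} summand-by-summand over $X^\circ$, using that the regulator formula is additive over the pieces of a viaduct (what the paper phrases as the formula being a monoid homomorphism). Your treatment is more explicit about the tag decomposition and the basepoint bookkeeping than the paper's one-line sketch, but the underlying argument is the same.
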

\begin{proof}
The proof is the same as \cite[Lemma 6.10]{bgmmz}. Apply \cref{regatptsimplicial} at every point of $X$ and uses the fact that the formula in \ref{regatptsimplicial} is a monoid homomorphism and therefore induces a map on group completion. 
\end{proof}
\begin{theorem}\label{theorem:regulator_extension_proved}
    The regulator $R^{\cal{G}}$ extends $R$. 
\end{theorem}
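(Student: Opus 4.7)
The goal is to check, for each $p$-simplex $\sigma$ of $N_\bullet \cal{W}(\cal{A}_{hG})$, that $R(\sigma) = R^{\cal{G}}(N_\bullet(L)(\sigma))$. Writing $\sigma$ as
\[ \{a_{0|i}\}_{i \in I_0} \xleftarrow{(\phi_1, \Phi_1)} \cdots \xleftarrow{(\phi_p, \Phi_p)} \{a_{p|i}\}_{i \in I_p}, \]
the localization $L$ sends the $k$-th arrow to the span $\{a_{k-1|i}\} \xleftarrow{(\phi_k, \Phi_k)} \{a_{k|i}\} \xrightarrow{\id} \{a_{k|i}\}$. The strategy is to exhibit a specific viaduct of length $p$ representing $N_\bullet(L)(\sigma)$, compute $R^{\cal{G}}$ on it, and recognize $R(\sigma)$ in the output.

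To construct the viaduct, take the common indexing set to be $J = I_p$ and, for $0 \leq k \leq p$, let $\iota_k = \underline{\phi_{k+1}} \circ \cdots \circ \underline{\phi_p}: I_p \to I_k$ (with $\iota_p = \id$). Set $b_{p|j} = a_{p|j}$, $\Xi_{k|j} = \Phi_{k|\iota_k(j)}$, and $b_{k|j} = \Xi_{k+1|j}\, b_{k+1|j}$ for $k < p$, so that $b_{k|j} = \Phi_{k+1|\iota_{k+1}(j)} \cdots \Phi_{p|j} \cdot a_{p|j}$. Define the dissections $\psi_{k|j}: b_{k|j} \to a_{k|\iota_k(j)}$ by $\psi_{p|j} = \id$ and, inductively, $\psi_{k|j} = \phi_{k+1|\iota_{k+1}(j)} \circ \Phi_{k+1|\iota_{k+1}(j)}(\psi_{k+1|j})$. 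The resulting diagram fits the shape of \cref{definition:viaduct}. Level by level, the $k$-th DMC-span of this viaduct is precisely the $\iota_k$-refinement of the canonical DMC-representative of the span $\{a_{k-1|i}\} \xleftarrow{(\phi_k, \Phi_k)} \{a_{k|i}\} \xrightarrow{\id} \{a_{k|i}\}$ produced by \cref{prop:rep}, so it represents the $k$-th morphism of $N_\bullet(L)(\sigma)$.

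Evaluating $R^{\cal{G}}$ on the constructed viaduct yields
\[ \sum_{j \in I_p} \Xi_{1|j} \otimes \cdots \otimes \Xi_{p|j} \otimes \mu(b_{p|j}) = \sum_{j \in I_p} \Phi_{1|\iota_1(j)} \otimes \cdots \otimes \Phi_{p|\iota_p(j)} \otimes \mu(a_{p|j}), \]
and expanding $\iota_k = \underline{\phi_{k+1}} \circ \cdots \circ \underline{\phi_p}$ makes this the formula for $R(\sigma)$ from \cref{regulatorW}. Because \cref{lemma:flagrepunique} guarantees that any other viaduct representing $N_\bullet(L)(\sigma)$ yields the same value of $R^{\cal{G}}$, the identity $R = R^{\cal{G}} \circ N_\bullet(L)$ holds on every simplex. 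The main obstacle is purely the bookkeeping in the construction of the dissections $\psi_{k|j}$ and in the verification that the $k$-th DMC-span of the viaduct indeed represents the $k$-th morphism of $L(\sigma)$; this is routine once one tracks how the $G$-action on $\cal{A}$ shuffles the factorization of each $(\phi_k, \Phi_k)$ against the refinement along $\iota_k$, and invokes \cref{factoring} and \cref{completingsquares} for the existence of the required commuting squares.
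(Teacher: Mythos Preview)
Your proof is correct and supplies precisely the computation that the paper's own proof omits: the paper's argument is a single sentence asserting $R^{\cal{G}}\circ N_\bullet(L)=R$ and citing well-definedness from \cref{lemma:regulator_natural}, whereas you explicitly build the viaduct representing $N_\bullet(L)(\sigma)$ and check the formulas match. The approach is the same in spirit; you have simply written out the bookkeeping the paper leaves implicit.
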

\begin{proof}
    The regulator $R^{\cal{G}}$ satisfies $R^{\cal{G}} \circ N_\bullet (L) = R$ and is well defined by \cref{lemma:regulator_natural}. 
\end{proof}
\cref{Cor:RatEquiv} follows after a few more steps. 
\begin{definition}\label{def:polytope_group}
  Define the \emph{polytope group of $\cal{A}$} to be $K_0(\cal{A})$. 
\end{definition}
\begin{corollary}[{\cite[Example 7.10]{bgmmz}}]
\label{cor:Cor_D_u_reg}
     Let $\cal{X}$ be the assembler encoding $n$-dimensional no moving spherical, hyperbolic, or Euclidean scissors congruence. Let $G$ be a subgroup of the isometry group. There is a rational equivalence 
      \[ K_* (\cal{X}_{hG}) \rightarrow H_* (G; \Pt(\cal{X}))\]
      induced by the universal regulator.
\end{corollary}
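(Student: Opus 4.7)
The plan is to deduce this corollary directly from \cite[Example 7.10]{bgmmz}, which already establishes exactly this rational equivalence using the original BGMMZ regulator $R$ of \cref{regulatorW}. The purpose of the new construction is then just to transfer that statement from $\cal{W}(\cal{X}_{hG})$ to $\cal{G}(\cal{X}_{hG})$, so that the universal regulator $R^{\cal{G}}$ inherits the same rational equivalence.

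To carry this out, I would first use \cref{prop:htpyorbitwassm} to conclude that $\cal{X}_{hG}$ is itself a weak assembler, and then apply \cref{prop:locWE} to obtain the equivalence $B(\cal{W}(\cal{X}_{hG})) \simeq B(\cal{G}(\cal{X}_{hG}))$. Upgrading via the Segal $\Gamma$-space machinery gives an equivalence of $K$-theory spectra. Under this identification, \cref{theorem:regulator_extension_proved} ensures that the map on $K$-theory induced by $R^{\cal{G}}$ agrees with the one induced by $R$. Combining this with the rational equivalence from \cite[Example 7.10]{bgmmz}, which ultimately rests on the classical Dupont-Sah computation $K(\cal{X}) \simeq_{\bb{Q}} H\Pt(\cal{X})$ for the no-moving assembler $\cal{X}$, the identification $K(\cal{X})_{hG} \simeq K(\cal{X}_{hG})$, and the standard formula $\pi_*((HA)_{hG}) = H_*(G;A)$, yields the desired rational equivalence.

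The main technical obstacle will be the $\Gamma$-space upgrade of \cref{prop:locWE}: one must check that the equivalence of classifying spaces is compatible with the disjoint union symmetric monoidal structure on $\cal{W}(\cal{X}_{hG})$. Since $\cal{G}(\cal{X}_{hG})$ inherits the same monoidal structure via localization, and since the hypotheses of \cref{prop:locWE} are stable under the pointed wedge construction $X \sma (-)$ indexed over a finite set $X$, this upgrade is formal. The remaining verification, that the naturality in $X$ established in \cref{lemma:regulator_natural} intertwines with the corresponding naturality of $R$ in \cite{bgmmz}, reduces on $\pi_0$ to $\id_{K_0(\cal{X})}$ by construction of the universal regulator, and parallels the arguments of \cite[Section 6]{bgmmz} in higher degrees.
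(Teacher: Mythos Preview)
Your approach is essentially the same as the paper's: both arguments reduce to the fact that the truncation $K(\cal{X}) \rightarrow H(\Pt(\cal{X}))$ is a rational equivalence for the no-moving assembler, and then take homotopy $G$-orbits to conclude. The paper's proof is a two-line version of what you wrote, and the remark following the corollary confirms that the only new content beyond \cite[Example 7.10]{bgmmz} is exactly the transfer to the $\cal{G}$-model via \cref{theorem:regulator_extension_proved}, which you correctly identify.

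One correction: the rational equivalence $K(\cal{X}) \simeq_{\bb{Q}} H\Pt(\cal{X})$ is not a classical Dupont--Sah result. The paper attributes it to \cite{Mal24} (the Thom spectrum model for scissors congruence $K$-theory), and that is the correct citation. Dupont and Sah related the scissors congruence \emph{groups} to group homology of the isometry group; the statement that the full $K$-theory \emph{spectrum} of the no-moving assembler is rationally Eilenberg--MacLane is more recent. Your additional technical remarks about the $\Gamma$-space upgrade of \cref{prop:locWE} are reasonable but not spelled out in the paper, which treats this as implicit in the equivalence of $K$-theory models.
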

\begin{proof}
    By the main theorem of \cite{Mal24},  the truncation map $K(\cal{X})\rightarrow H(\Pt(\cal{X}))$ is a rational equivalence. Taking homotopy orbits this map defines the abstract trace, which is therefore also a rational equivalence. 
\end{proof}
\begin{remark}
    The only thing distinguishing the above corollary from \cite[Example 7.10]{bgmmz} is that we can now consider the $K$-theory as given by the group completion construction. As we shall see, this is helpful in constructing generators. 
\end{remark}
\section{The regulator for one dimensional geometry}
We are now able to describe the regulator for one dimensional geometry.  
\begin{definition}
    As in \cref{htpyorbiteuc} we use $\cal{E}^1_{e}$ for the no-moving assembler of the line. Taking homotopy orbits by the group of translations $T_1 \cong \bb{R}$ we get the assembler $\cal{E}_T ^1 = (\cal{E}^1)_{hT_1}$ which encodes 1-dimensional translational scissors congruence. 
\end{definition} 
By the Thom spectrum model for $K$-theory we have 

\begin{theorem}
 [{\cite[Theorem 1.5]{Mal24}}] $K_i (\cal{E}^1_T) \cong H_{i+1} (T_1; \on{Pt}(E^1))$ .   
\end{theorem}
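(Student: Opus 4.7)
The plan is to derive this statement as a specialization of the main theorem of \cite{Mal24}, which produces a Thom spectrum model for translational Euclidean scissors congruence $K$-theory in each dimension. I would organize the argument into two main steps, with the bulk of the work hidden in the first.

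First, I would invoke Malkiewich's Thom spectrum identification. In dimension $n$, that result identifies $K(\cal{E}^n_T)$ with a rank-$n$ Thom spectrum built over a homotopy-orbit construction for the translation group $T_n$, with coefficients controlled by the polytope group $\on{Pt}(E^n)$. The rank-$n$ Thom class shifts degrees by $n$ when passing to homotopy groups. Specialized to $n=1$, this takes the form of an equivalence
\[ K(\cal{E}^1_T) \simeq \Omega (H\on{Pt}(E^1))_{hT_1}, \]
where the loop coordinate records the rank-$1$ Thom class attached to the one-dimensional Euclidean geometry.

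Second, I would compute homotopy groups using the standard identification of homotopy orbits of an Eilenberg--MacLane spectrum with group homology: for an abelian group $M$ with $G$-action, $\pi_n((HM)_{hG}) \cong H_n(G;M)$. Combined with the loop shift from the first step this yields
\[ K_i(\cal{E}^1_T) \;=\; \pi_i\, \Omega(H\on{Pt}(E^1))_{hT_1} \;=\; \pi_{i+1}(H\on{Pt}(E^1))_{hT_1} \;\cong\; H_{i+1}(T_1; \on{Pt}(E^1)), \]
as claimed.

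The main obstacle is the first step: the spectrum-level Thom identification from \cite{Mal24}. Reconstructing it requires the full Tits building analysis of the no-moving assembler $\cal{E}^1_e$ and careful bookkeeping of the orientation data that produces the rank-$1$ Thom class. Notably, the degree shift by $+1$ is \emph{not} visible from the rational regulator of \cref{cor:Cor_D_u_reg}, which only supplies the unshifted rational equivalence $K_*(\cal{X}_{hG}) \to H_*(G; \on{Pt}(\cal{X}))$; it must instead be extracted from the spectrum-level structure of $K(\cal{E}^1_e)$ prior to taking $T_1$-homotopy orbits. Once that spectrum-level equivalence is in hand, the remaining steps are standard and produce the degree shift automatically.
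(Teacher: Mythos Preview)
The paper does not prove this statement; it is quoted directly from \cite[Theorem~1.5]{Mal24} without argument, so there is no paper proof to compare against. Your outline---invoke Malkiewich's spectrum-level identification and then read off homotopy groups---is the intended route.

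However, the degree shift you build in is wrong, and the source of the confusion is that the statement as printed contains a typo. The correct identification is $K_i(\cal{E}^1_T)\cong H_i(T_1;\Pt(E^1))$, with no $+1$. You can see this already at $i=0$: $K_0(\cal{E}^1_T)\cong\bb{R}$ is the length invariant, and $H_0(T_1;\Pt(E^1))=\Pt(E^1)_{T_1}\cong\bb{R}$ as well, whereas $H_1(T_1;\Pt(E^1))\cong H_2(T_1;\bb{Z})\cong\bb{R}^{\sma 2}$ is the wrong group. Internal consistency of the paper forces the same conclusion: combining the printed index $i+1$ with the subsequent proposition $H_i(T_1;\Pt(E^1))\cong H_{i+1}(T_1;\bb{Z})$ would yield $K_n\cong H_{n+2}(T_1;\bb{Z})$, contradicting both the introduction and \cref{prop:regularformula}, which use $K_n\cong H_{n+1}(T_1;\bb{Z})$. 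The unshifted version is also precisely what \cref{cor:Cor_D_u_reg} records.

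Consequently your displayed equivalence $K(\cal{E}^1_T)\simeq\Omega(H\Pt(E^1))_{hT_1}$ is not correct: there is no extra loop. Malkiewich's model gives $K(\cal{E}^1_T)\simeq(H\Pt(E^1))_{hT_1}$, and the $+1$ that eventually appears in $K_n\cong H_{n+1}(T_1;\bb{Z})$ comes from the connecting homomorphism $\delta$ in the short exact sequence $0\to\Pt(E^1)\to\bb{Z}[\bb{R}]\to\bb{Z}\to 0$ analyzed immediately after this theorem, not from a rank-one Thom class. You correctly observed that \cref{cor:Cor_D_u_reg} gives the unshifted map; that observation should have been a red flag that the printed index was off, rather than evidence for a hidden spectrum-level shift.
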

 \begin{definition}\label{def:polytope_group_1-dim}
      In the $1$-dimensional case, $\Pt(E^1)$ is the free abelian group generated by intervals $[a,b]$ quotiented by the relation that $[a,b]=[a,x] +[x,b]$ for every $a<x<b$. In other words, $\Pt(E^1)=K_0 (\cal{E}^1_e )$ \cite[Definition 6.7]{klmms24} \cite[Definition 2.11]{Mal24}. There is an action of $T_1$ on $\Pt(E^1)$ defined by $\Psi + [a,b] = [\Psi + a, \Psi + b]$ for $\Psi \in T_1$. 
 \end{definition} 
\begin{definition}
Recall that if $A$ is a torsion free abelian group then the Pontryagin product gives a homomorphism  $H_1 (A)^{\otimes k} \rightarrow H_k (A)$, given by the shuffle formula,  and  induces an isomorphism $H_1(A)^{\sma k} \cong H_k (A)$ \cite[Definition 6.5.11]{weibel1994introduction}. 
Since $H_1(A) \cong A$ we see that $H_k (T_1) \cong T_1^{\sma k }=\bb{R}^{\sma k}$ \cite[Proposition 4.7]{Dup01}.
\end{definition}
\begin{definition}
    We define the volume homomorphism
 \begin{align*} 
 \text{vol} : \on{Pt}(E^1) &\rightarrow \R \\
 [a,b] &\mapsto b-a.
 \end{align*}
  
 \end{definition}

     \begin{proposition}
         The volume regulator agrees with the map on homology that applies $\vol: \Pt(E^1) \rightarrow \bb{R}.$
     \end{proposition}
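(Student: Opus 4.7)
The plan is to factor the volume regulator through the universal regulator by inspecting the explicit formula, and then pass to homotopy groups. The volume regulator is the $\mu$-regulator of \cref{regulatorW} applied to $\mu = \vol \colon \Pt(E^1) = K_0(\cal{E}^1_e) \to \bb{R}$, and by the formula following \cref{regatptsimplicial} it sends a viaduct with bottom-right entry $\{b_{m|j}\}_{j \in J}$ and translation data $\{\Phi_{k|j}\}$ to
\[
\sum_{j \in J} [\Phi_{1|j} \mid \cdots \mid \Phi_{m|j}] \otimes \vol(b_{m|j}).
\]

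The central observation is that this formula is manifestly natural in the measure. Writing $R^\cal{G}_{\on{id}}$ for the universal regulator associated to $\on{id}_{\Pt(E^1)}$, we have an equality
\[
R^\cal{G}_\vol \;=\; (\on{id} \otimes \vol) \circ R^\cal{G}_{\on{id}}
\]
as maps of simplicial abelian groups, where $\on{id} \otimes \vol$ denotes the map of bar constructions $N^\otimes_\bullet(\bb{Z},\bb{Z}[T_1],\Pt(E^1)) \to N^\otimes_\bullet(\bb{Z},\bb{Z}[T_1],\bb{R})$ that applies $\vol$ to the last tensor factor. By naturality of \cref{def:reg_pointed} in $X$, the same factorization will hold for the pointed regulator, hence on the Segal $\Gamma$-space, and therefore on $K$-theory spectra.

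Passing to geometric realizations and homotopy groups, the map $\on{id} \otimes \vol$ becomes the coefficient map $\vol_\ast \colon H_\ast(T_1; \Pt(E^1)) \to H_\ast(T_1; \bb{R})$ induced by functoriality of group homology in the coefficient module. Combining this with the Thom-spectrum identification $K_i(\cal{E}^1_T) \cong H_{i+1}(T_1; \Pt(E^1))$ of \cite[Theorem 1.5]{Mal24}, which is realized by the universal regulator in the sense of \cref{cor:Cor_D_u_reg}, yields the claim. The only mildly delicate point, and the place where care is needed, is in matching the universal regulator $R^\cal{G}_{\on{id}}$ on $\pi_\ast$ with the identification used in the statement; this is precisely the content of \cref{cor:Cor_D_u_reg} specialized to one dimension, after which the desired equality reads off from the displayed factorization.
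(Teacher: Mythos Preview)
Your argument is correct and follows essentially the same strategy as the paper: factor the volume regulator as the universal regulator followed by the change-of-coefficients map induced by $\vol$, then use that the universal regulator realizes the identification $K_*(\cal{E}^1_T)\cong H_*(T_1;\Pt(E^1))$. The only difference is cosmetic: you establish the factorization $R^{\cal G}_{\vol}=(\id\otimes\vol)\circ R^{\cal G}_{\id}$ by direct inspection of the viaduct formula, whereas the paper invokes the general compatibility theorem \cite[Theorem~1.3]{bgmmz} (which states that $K(\cal A)_{hG}\to K(\cal A_{hG})\xrightarrow{r}(HA)_{hG}$ is the homotopy $G$-orbits of the measure map $K(\cal A)\to HA$) and then specializes. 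Your route is slightly more self-contained; the paper's is slightly more conceptual, but both arrive at the same conclusion by the same mechanism.
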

 \begin{proof}
     
This is a special case of \cite{bgmmz}[Theorem 1.3] which shows that the map
 \[ K(\cal{A})_{hG} \xrightarrow{Asm} K(\cal{A}_{hG}) \xrightarrow{r} (HA)_{hG}\] 
 is the homotopy $G$-orbits of the map 
 \[ K(\cal{A}) \rightarrow HA\]
 defined by the measure $\mu$. Taking $G$-orbits of an Eilenberg-Maclane spectrum defines group homology  $\pi_* (HA)_{hG}\cong H_* (G;A)$. The map $K(\cal{E}^1_e) \rightarrow H (\Pt(E^1))$ is a rational equivalence and after taking homotopy orbits this becomes an equivalence $K(\cal{E}_T ^1) \cong (H( \Pt(E^1)))_{T_1}$. Using this equivalence, the volume regulator is induced by taking homotopy orbits of $H(\Pt(E^1)) \rightarrow H \bb{R}$. 
  \end{proof}
     \begin{proposition}
         $ H_{i}(T_1, \Pt(E^1)) \cong H_{i+1}(T_1; \bb{Z})$. 
     \end{proposition}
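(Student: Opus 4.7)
The plan is to realize $\Pt(E^1)$ as the augmentation kernel in the regular representation of $T_1 \cong \bb{R}$. Concretely, define $\partial \colon \Pt(E^1) \to \bb{Z}[\bb{R}]$ on generators by $\partial[a,b] = [b] - [a]$; this respects the subdivision relation $[a,b] = [a,x] + [x,b]$ since $([x]-[a]) + ([b]-[x]) = [b]-[a]$, and it is $T_1$-equivariant for the translation action on both sides. Together with the augmentation $\epsilon \colon \bb{Z}[\bb{R}] \to \bb{Z}$, I claim these assemble into a short exact sequence of $T_1$-modules
\[
0 \longrightarrow \Pt(E^1) \xrightarrow{\;\partial\;} \bb{Z}[\bb{R}] \xrightarrow{\;\epsilon\;} \bb{Z} \longrightarrow 0,
\]
from which the isomorphism will be extracted via the long exact sequence in group homology.

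For exactness, $\epsilon \partial = 0$ and surjectivity of $\epsilon$ are immediate. For exactness in the middle, given $\xi = \sum n_i [a_i] \in \bb{Z}[\bb{R}]$ with $a_1 < \cdots < a_k$ and $\sum n_i = 0$, the telescoping element $-\sum_{i=1}^{k-1}(n_1 + \cdots + n_i)[a_i, a_{i+1}]$ maps to $\xi$ under $\partial$. Injectivity of $\partial$ follows by a colimit argument: for each finite set $S = \{s_1 < \cdots < s_k\} \subset \bb{R}$ the subgroup $\Pt_S \subset \Pt(E^1)$ generated by the consecutive intervals $[s_i, s_{i+1}]$ is free abelian on these generators (no further subdivision is available within $\Pt_S$), and the telescoping formula shows $\partial$ restricts to an isomorphism $\Pt_S \cong \bb{Z}[S]_0$ onto the augmentation-zero subgroup. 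Since any finite collection of intervals admits a common refinement of endpoints, $\Pt(E^1) = \varinjlim_S \Pt_S$ and $\bb{Z}[\bb{R}]_0 = \varinjlim_S \bb{Z}[S]_0$, so injectivity passes to the colimit.

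The long exact sequence in $T_1$-group homology attached to this short exact sequence takes the form
\[
\cdots \to H_{i+1}(T_1; \bb{Z}) \to H_i(T_1; \Pt(E^1)) \to H_i(T_1; \bb{Z}[T_1]) \to H_i(T_1; \bb{Z}) \to \cdots .
\]
The crucial observation is that $\bb{Z}[\bb{R}] = \bb{Z}[T_1]$ is the free $\bb{Z}[T_1]$-module of rank one and hence acyclic: $H_i(T_1; \bb{Z}[T_1]) = 0$ for $i \geq 1$, and $H_0(T_1; \bb{Z}[T_1]) = \bb{Z}$ with the natural map to $H_0(T_1; \bb{Z}) = \bb{Z}$ being the identity (both are computed by the augmentation). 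For $i \geq 1$ the sequence collapses to $H_i(T_1; \Pt(E^1)) \cong H_{i+1}(T_1; \bb{Z})$, and the tail at $i = 0$ yields the same identification $H_0(T_1; \Pt(E^1)) \cong H_1(T_1; \bb{Z}) \cong \bb{R}$, matching $K_0(\cal{E}_T^1) \cong \bb{R}$. The main content is thus the construction and exactness of the short exact sequence; the remainder of the argument is purely formal, and the only mildly technical step is verifying injectivity of $\partial$ by reducing to finite subdivisions.
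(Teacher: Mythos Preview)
Your proof is correct and follows essentially the same route as the paper: both identify $\Pt(E^1)$ with the augmentation ideal of $\bb{Z}[\bb{R}]$ via $[a,b]\mapsto [b]-[a]$, invoke the long exact sequence in $T_1$-homology, and use the vanishing of higher homology with coefficients in the regular module. You supply more detail on exactness of the short exact sequence than the paper does, and your handling of the $i=0$ case (observing that $\epsilon_*$ is an isomorphism on $H_0$, forcing $\partial_*=0$) is a slight variant of the paper's direct check that $\partial_*$ vanishes on coinvariants, but the argument is otherwise the same.
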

     \begin{proof}
         There is a short exact sequence of $T_1$ modules as follows. 
 
 \[ \begin{tikzcd}
  &\left[a,b\right] \ar[r, maps to] &\left[b\right]-\left[a\right] & \\
 0 \ar[r] & \on{Pt}(E^1) \ar[d, "\vol"']\ar[r, "\beta"]& \bb{Z}[\bb{R}] \ar[r, "\epsilon"] & \bb{Z} \ar[r] & 0 \\
 &  \bb{R} & \sum n_r [r] \ar[r , maps to ] & \sum n_r 
 \end{tikzcd}\]
     
 This short exact sequence induces a long exact sequence on homology: 
 
 \[ \dots \rightarrow H_{i+1} (T_1 ; \bb{Z}[\bb{R}] ) \rightarrow H_{i+1} (T_1; \Z) \xrightarrow{\delta} H_i (T_1 ; \Pt(E^1)) \rightarrow H_i (T_1 ; \bb{Z}[\bb{R}]) \rightarrow \dots \]
 
 By Shapiro's Lemma,  
 
 \[ H_* (T_1 ; \bb{Z}[\bb{R}] \otimes\bb{Z}) = H^* (T_1 ; \Hom_\bb{Z} (\bb{Z}[\bb{R}] , \bb{Z})) = \begin{cases} \bb{Z} & *=0 \\ 0 &*\neq 0\end{cases} \]
 and we conclude that $\delta$ is an isomorphism for $i >0$. 
 \par When $i=0$ we check that 
 \[ \beta_*: H_0(T_1, \Pt(E^1)) \rightarrow H_0(T_1 ; \bb{Z}[\bb{R}]) \]
 is the zero map. Since $0$-th homology is coinvariants, we need to  show that 
 \[ \Pt(E^1)_{T_1} \ni [a,b] \mapsto [b] - [a] \equiv 0 \in \bb{Z}[\bb{R}]_{T_1}. \] If $a<b$, there exists $\Psi \in T_1$ such that $\Psi + a = b$. Therefore we see that 
 \[ [b]-[a]=(\Psi + [a]) -[a]\equiv 0 \in \bb{Z}[\bb{R}]_{T_1}. \]
 We conclude that $\beta_*$ defines the zero map on $H_0$ and the snake map $\delta$ is an isomorphism for all $i \geq 0$. 
 
\end{proof}
\begin{remark}
     Note, that $ \vol \circ \beta^{-1} : \im(\beta) \rightarrow \bb{R}$ is given by de-bracketing: $\vol \beta^{-1}[a]=a$. The map given by debracketing is defined on all of $\Z[\bb{R}]$, but is not $T_1$-equivariant. It is only $T_1$-equivariant on the kernel of $\epsilon$.  If it  defined an equivariant map on $\Z[\bb{R}]$, it would make the volume regulator trivial on the higher $K$-groups.
     \end{remark}
\begin{proposition}\label{prop:regularformula}
Identifying $K_n (\cal{E}_T^1) $ with $ H_{n+1}(T_1; \bb{Z})$ by the snake map $\delta$, the regulator $r$ to $H_*(T_1, \bb{R})$ is given by the formula:
  \begin{align*}
H_{n+1}(T_1;\bb{Z}) \cong \bb{R}^{\sma (n+1)}  & \hookrightarrow T_1^{\sma n } \otimes \bb{R} \cong H_{n+1}(T_1,\bb{R}) \\
v_1 \sma \dots \sma v_{n+1} & \mapsto \sum_j (-1)^j (v_1 \sma \dots \sma \widehat{v_j} \sma \dots \sma v_{n+1})\otimes v_j.
\end{align*}
\end{proposition}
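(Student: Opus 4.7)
The plan is to compute $\vol_* \circ \delta^{-1} : H_{n+1}(T_1;\bb{Z}) \to H_n(T_1;\bb{R})$ at the chain level, using shuffle representatives for Pontryagin products. Under the Pontryagin isomorphism $\bigwedge^{n+1} T_1 \cong H_{n+1}(T_1;\bb{Z})$, the class $v_1 \wedge \cdots \wedge v_{n+1}$ is represented by the shuffle cycle
\[ z = \sum_{\sigma \in \fS_{n+1}} (-1)^\sigma [v_{\sigma(1)} \mid \cdots \mid v_{\sigma(n+1)}] \in C_{n+1}(T_1;\bb{Z}).\]
To trace $z$ through the snake map, I would lift it to $\tilde z = z \otimes [0] \in C_{n+1}(T_1;\bb{Z}[\bb{R}])$ using the set-theoretic section $1 \mapsto [0]$ of $\epsilon$. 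Since $\partial z = 0$, the bar boundary $\partial \tilde z$ automatically lands in $C_n(T_1;\Pt(E^1))$ and represents $\delta^{-1}[z]$.

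Next I would apply $\vol_*$ and simplify. By the remark preceding the proposition, $\vol$ agrees on $\Pt(E^1)$ with the non-equivariant $\bb{Z}$-linear extension $\bb{Z}[\bb{R}] \to \bb{R}$, $[x] \mapsto x$. Since $\partial \tilde z$ takes values in $\Pt(E^1)$, I may apply this extension summand-by-summand. In the bar differential of $[v_{\sigma(1)}\mid\cdots\mid v_{\sigma(n+1)}] \otimes [0]$, only the initial face acts on the coefficient, producing $[v_{\sigma(2)}\mid\cdots\mid v_{\sigma(n+1)}] \otimes [-v_{\sigma(1)}]$, while every interior and terminal face leaves the coefficient as $[0]$, which de-brackets to $0$. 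Consequently all non-initial faces drop out and
\[ \vol_*(\partial \tilde z) \;=\; -\sum_{\sigma \in \fS_{n+1}} (-1)^\sigma [v_{\sigma(2)}\mid\cdots\mid v_{\sigma(n+1)}] \otimes v_{\sigma(1)}. \]

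Finally, I would reorganize by grouping on $j = \sigma(1)$. A direct inversion count gives $(-1)^\sigma = (-1)^{j-1}(-1)^\tau$, where $\tau$ is the induced permutation of $\{1,\dots,n+1\}\setminus\{j\}$. The inner sum over $\tau$ is exactly the Pontryagin representative of $v_1\wedge\cdots\wedge\widehat{v_j}\wedge\cdots\wedge v_{n+1} \in H_n(T_1;\bb{Z})$, and combining the two signs gives $(-1)^j$ in each term, recovering the stated formula. The main subtle point is the term-by-term simplification: the de-bracketing extension is not $T_1$-equivariant on $\bb{Z}[\bb{R}]$, so it cannot in general be applied at the chain level; it is legitimate here only because the whole chain $\partial \tilde z$ lies in $C_n(T_1;\Pt(E^1))$, where the extension and $\vol$ agree.
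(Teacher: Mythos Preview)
Your approach is essentially identical to the paper's: both represent $v_1\wedge\cdots\wedge v_{n+1}$ by the shuffle cycle, lift along $\epsilon$ via $1\mapsto[0]$, apply the bar differential, observe that de-bracketing kills every face except the one where the group acts on the coefficient, and then regroup by which $v_j$ lands in the coefficient slot. The only visible difference is a bar-complex convention: the paper places the coefficient on the right, so the last face survives and contributes $\otimes\,v_{\sigma(n+1)}$, while your convention has the initial face acting and contributes $\otimes\,v_{\sigma(1)}$; both reorganizations yield the stated formula. One notational slip: you write $\vol_*\circ\delta^{-1}$, but the connecting map goes $\delta:H_{n+1}(T_1;\bb{Z})\to H_n(T_1;\Pt(E^1))$, and your $\partial\tilde z$ represents $\delta[z]$, not $\delta^{-1}[z]$.
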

\begin{remark}
    The significance of the snake map in this context is that it gives us a way to identify homology classes in $H_k(T_1, \Pt(E^1))$ with homology classes of $H_{k+1}(T_1, \bb{Z})$.  This latter group has a geometric  interpretation. 
\end{remark}
\begin{proof}Following the definition of the snake map we obtain the formula: 
 
\begin{align*}
    v_1 \sma \dots \sma v_{n+1} \in &  H_{n+1}(T_1 ; \bb{Z})\\
    & \uparrow\\
    \sum_{\sigma \in \fS_{n+1}} (-1)^\sigma [v_{\sigma(1)} | \dots | v_{\sigma(n+1)}] \otimes 1 \in & C_{n+1}(T_1; \bb{Z})\\
    & \uparrow \\
    \sum_{\sigma \in \fS_{n+1}} [v_{\sigma(1)}| \dots | v_{\sigma(n+1)}|0] \in & C_{n+1}(T_1; \bb{Z}[\bb{R}])\\
    &  \downarrow d\\
    \sum_{\sigma \in \fS_{n+1}}(-1)^
    \sigma \left(\sum_{j} (-1)^j [v_{\sigma(1)}| \dots | v_{\sigma(j)}+v_{\sigma(j+1)}| \dots | v_{\sigma(n+1)}| 0 ]\right) \in & C_n (T_1 ; \bb{Z}[\bb{R}])\\
    & \downarrow \vol_\bullet\\
    \sum_{\sigma \in \fS_{n+1}} (-1)^\sigma (-1)^{n+1}[v_{\sigma (1)}| \dots | v_{\sigma (n)}] \otimes v_{\sigma (n+1)} \in & C_n (T_1 ; \bb{R})\\
    & \downarrow \\
    \sum_{j} (-1)^j (v_1 \sma \dots \sma \widehat{v}_j \sma \dots \sma v_{n+1}) \otimes v_j \in & H_n(T_1 ; \bb{R})
\end{align*}

The map $\vol_\bullet$ is given by $\vol_\bullet [v_1 \mid \dots \mid v_{n+1}]=[v_1 \mid \dots \mid v_{n}] \otimes v_{n+1}$. Note that  $\vol_\bullet$ is a map of groups, but not chain complexes as it does not respect the differential.  However $\vol_\bullet$ is a map of chain complexes on the kernel of $\epsilon$. In the second to last step we simplify the expression by  using the fact that only the terms with $x \otimes v_{\sigma(n+1)}$ survive since $x \otimes 0=0$ is zero. 
\par We also rearrange the expression 
\[ \sum_{\sigma \in \fS_{n+1}} (-1)^\sigma (-1)^{n+1} [v_{\sigma(1)} \mid \dots \mid v_{\sigma(n)}] \otimes v_{\sigma(n+1)} \]
using the partition $\fS_{n+1} = \bigcup B_j$  where $B_j = \{\sigma \in \fS_{n+1} \mid \sigma (j) = n+1 \}$ so that terms of the form $x \otimes v_j$ are grouped together. This is useful since the map to homology is defined on cycles.
\par The map to $H_n (T_1 , \R)$ is only defined on cycles. It is given by the formula
\[\sum_{\tau \in B_j} (-1)^\tau [ v_{\tau(1)} \mid \dots \mid v_{\tau(n+1)}] \otimes v_j \mapsto (v_1 \sma \dots \sma \hat{v_j} \sma \dots \sma v_{n+1})\otimes v_j \] which concludes the proof. 

\end{proof}

\section{Computing generators of $K_*(\cal{E}_T ^1)$}
We construct generators of  of $K_n (\cal{E}_T ^1)$ by gluing $n!$ simplices together, constructing a subcomplex of $B \cal{G}(\cal{E}_T ^1)$. This turns out to give one of the well known simplicial structures on the torus. We recall some background to describe the construction of the generators. Since $K$-theory of an assembler is a spectrum we need to use some tools from stable homotopy theory. We give a brief presentation of the tools we use. A reference is \cite{adams1978infinite}.
\begin{definition} Let $X$ be a spectrum. Associated to every spectrum is a space $\Omega^\infty (X)$ called the \emph{infinite loop space of X}. The infinite loop space $\Omega^\infty (X)$ is constructed by fibrantly replacing $X$ and then taking the $0$-th space of the resultant fibrant spectrum $R(X)$. This construction is adjoint to the construction of suspension spectrum.
\[\begin{tikzcd}
\hoSpc_*
\arrow[r, "\Sigma^\infty"{name=F}, bend left=25] &
\hoSp_*
\arrow[l, "\Omega^\infty"{name=G}, bend left=25]
%--- Adjunction Symbol
\arrow[phantom, from=F, to=G, "\dashv" rotate=-90]
\end{tikzcd}\]
\par For an infinite loop space $\Omega^\infty (X)$  we define $B^\infty \Omega^\infty (X)$ to be the associated spectrum. The $k$-th level is given by iterating the bar construction $(B^\infty \Omega^\infty (X))_k = B^k \Omega^\infty (X)$ and the bonding maps are equivalences: 
\[ B^k \Omega^\infty (X) \xrightarrow{\sim} \Omega B (B^k \Omega^\infty (X))=\Omega B^{k+1} \Omega^\infty (X)\]
since $B^{k} \Omega^\infty (X)$ is already a group like space. Note that, if $X$ is connective then $X \simeq B^\infty \Omega^\infty (X)$. This material is covered in \cite[Section 1.7]{adams1978infinite} and \cite{spectra_book}.
\end{definition}
\begin{proposition}
    Let $\cal{A}$ be an assembler. A map from an $n$-dimensional torus $\phi: T^n \rightarrow B(\cal{G}(\cal{A}))$ produces an element of $\pi_n ( K(\cal{A}))=K_n (\cal{A})$. 
\end{proposition}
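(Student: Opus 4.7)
The plan is to pass from $B\cal{G}(\cal{A})$ into the infinite loop space $\Omega^\infty K(\cal{A})$ via the unit of group completion, then use the stable splitting of the torus to isolate the top-dimensional cell. By construction $K(\cal{A}) = \Omega B^{\sqcup}(B\cal{G}(\cal{A}))$, so group completion provides a canonical unit map $\eta: B\cal{G}(\cal{A}) \to \Omega^\infty K(\cal{A})$. Postcomposing $\phi$ with $\eta$ gives $\eta \circ \phi: T^n \to \Omega^\infty K(\cal{A})$, and by the $(\Sigma^\infty_+, \Omega^\infty)$-adjunction this is the same data as a map of spectra
\[ \widetilde{\phi}: \Sigma^\infty_+ T^n \to K(\cal{A}). \]

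Second, I would invoke the standard stable splitting
\[ \Sigma^\infty_+ T^n \simeq \bigvee_{k=0}^n \bigvee^{\binom{n}{k}} \Sigma^\infty S^k, \]
obtained by iterating the basic decomposition $\Sigma^\infty_+ (X \times S^1) \simeq \Sigma^\infty_+ X \vee (\Sigma^\infty_+ X) \wedge \Sigma^\infty S^1$, which in turn follows from $\Sigma^\infty_+ S^1 \simeq S^0 \vee \Sigma^\infty S^1$. The top wedge summand is a single copy of $\Sigma^\infty S^n$ corresponding to the fundamental class of $T^n$; let $\iota: \Sigma^\infty S^n \hookrightarrow \Sigma^\infty_+ T^n$ denote the resulting canonical inclusion. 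Then $\widetilde{\phi} \circ \iota$ represents the desired element of $\pi_n K(\cal{A}) = K_n(\cal{A})$.

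Equivalently, the class can be described as the image of the fundamental class $[T^n] \in H_n(T^n;\bb{Z})$ under the composition
\[ H_n(T^n;\bb{Z}) \xrightarrow{(\eta \phi)_*} H_n(\Omega^\infty K(\cal{A});\bb{Z}) \to \pi_n K(\cal{A}), \]
where the last arrow is the projection onto the spherical part afforded by the $E_\infty$-structure on $\Omega^\infty K(\cal{A})$. The construction is entirely formal and presents no real obstacle; the main difficulty lies not in this proposition but in the subsequent task of producing explicit $\phi$'s whose associated classes realize nontrivial (and eventually generating) elements of $K_n(\cal{E}_T^1)$, which is what motivates the constructions of Section 5.
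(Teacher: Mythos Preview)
Your argument is correct and follows essentially the same route as the paper: compose with the group-completion unit into $\Omega^\infty K(\cal{A})$, pass across the $(\Sigma^\infty_+,\Omega^\infty)$-adjunction, and then use the stable splitting of $T^n$ to pick out the top-dimensional sphere. Your extra remarks on how the splitting is obtained and the alternative homological description are fine elaborations but do not change the underlying strategy.
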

\begin{proof}
Given $\phi$ we can compose with group-completion to obtain a map 
\[ \tilde{\phi} : T^n \rightarrow \Omega B^{\amalg}(B(\cal{G} (\cal{A}))\cong \Omega^\infty K(\cal A).\] 
Since we are mapping into an infinite loop space $\tilde{\phi}$ is equivalent to a map 
\[ \Sigma^\infty_+ T^n \rightarrow B^\infty \Omega^\infty K(\cal{A}) \simeq K(\cal{A}).\]
Finally, observe that there is a map $\Sigma^n \bb{S}= \bb{S}^n \rightarrow \Sigma^\infty _+ T^n $ since a torus is stably equivalent to a wedge of spheres $\bigvee_{k=0}^n \binom{n}{k} (S^k) $, where $\binom{n}{k} (S^k)$ is the wedge of $\binom{n}{k}$ copies of $S^k$. The map defined above maps into the top dimensional sphere $\binom{n}{n} (S^n)$ in the wedge.  
We can think of this as stable version of the fundamental class of the torus.
\par By composition we obtain a map 
\[ \bb{S}^n \rightarrow K(\cal{A})\]
which represents an element of $K_n (\cal{A}).$
\end{proof}

\begin{construction}\label{complexrecipe}
We construct a sub-complex $C_X$ of $B(\cal{G} (\cal{E}^1_T))$ which gives rise to a generator of $K_*(\cal{E}^1_T)$.
\end{construction}

     Let $X'=\{\Phi_0, \dots , \Phi_{n}\}$ be a set of positive real numbers.  Let $X=X' \setminus \{\Phi_{0}\}$ and $\Phi=\Phi_0 + \dots + \Phi_{n}$.
    We define a group homomorphism 
    \[\bb{Z}^{\oplus X} \rightarrow \aut_{\cal{E}_T^1}[0,\Phi]\] by mapping $\Phi_j$ to the the interval exchange transformation rotating the interval $[0,\Phi]$ clockwise by $\Phi_j$. In our model, this corresponds to the DMC-span:
    \[
    \begin{tikzcd}
        \{(\Phi_j -\Phi)+\left[\Phi- \Phi_j,\Phi \right], \Phi_j +\left[0,\Phi- \Phi_j \right]\}\ar[dr, dotted] & &\ar[ll, tail] \{\left[0,\Phi-\Phi_j \right],\left[\Phi-\Phi_j,\Phi \right]\} \ar[dl, dotted]\\
        &\left[0,\Phi \right].&
    \end{tikzcd}
    \]
    This is well defined since circle rotations commute.
    \par Taking classifying spaces we obtain a map 
    \[ 
    B(\bb{Z}^{\oplus X}) \rightarrow B(\aut_{\cal{E}_T^1}[0,\Phi]) \rightarrow B(\cal{G}(\cal{E}_T ^1)) 
    \]
  to the classifying space of the scissors congruence groupoid. The image of this map is a complex $C_X$ which is equivalent to the $n$-dimensional torus with its usual simplicial structure.

\begin{proposition}\label{prop:torusrepresentsgenerator}
    The complex constructed in \cref{complexrecipe}  represents the element $\Phi_0 \sma \Phi_1 \sma \dots \sma \Phi_n \in K_{n}(\cal{E}_T^1)$
\end{proposition}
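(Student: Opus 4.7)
The plan is to identify the class represented by $C_X$ by computing its image under the regulator $r \colon K(\mathcal{E}_T^1) \to (H\mathbb{R})_{hT_1}$ and matching it to the explicit regulator formula of \cref{prop:regularformula}. By \cref{cor:Cor_D_u_reg} the regulator is a rational equivalence, and the target group $H_n(T_1;\mathbb{R}) \cong T_1^{\wedge n}\otimes\mathbb{R}$ is torsion-free, so the image of a class in $K_n(\mathcal{E}_T^1)$ under $r$ determines that class. Hence it suffices to show that $r$ applied to the class of $C_X$ agrees with the formula
\[
\Phi_0 \wedge \dots \wedge \Phi_n \ \longmapsto \ \sum_{j=0}^n (-1)^j \bigl(\Phi_0 \wedge \dots \wedge \widehat{\Phi}_j \wedge \dots \wedge \Phi_n\bigr) \otimes \Phi_j.
\]

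First, I would unpack the fundamental class of $T^n = B(\mathbb{Z}^{\oplus X})$ in the normalized bar complex of $\mathbb{Z}^{\oplus X}$, which is the alternating sum $\sum_{\sigma \in \mathfrak{S}_n} (-1)^\sigma [\Phi_{\sigma(1)} \mid \dots \mid \Phi_{\sigma(n)}]$. Composing with the map to $B\mathcal{G}(\mathcal{E}_T^1)$ from \cref{complexrecipe}, each bar $[\Phi_{\sigma(1)} \mid \dots \mid \Phi_{\sigma(n)}]$ sends to a flag of composable automorphisms of $[0,\Phi]$ obtained by iterating the rotation DMC-spans of \cref{complexrecipe}. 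Stacking these spans using the common refinement of $[0,\Phi]$ induced by the joint partition determined by $\Phi_{\sigma(1)}, \dots, \Phi_{\sigma(n)}$ produces a viaduct of length $n$ in the sense of \cref{definition:viaduct}; the existence and uniqueness up to common subdivision follow from \cref{lemma:flagrepex} and \cref{lemma:flagrepunique}.

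Next I would apply the explicit formula for $R^{\mathcal{G}}$ to this viaduct. Each top-dimensional piece $b_{n|j}$ appearing at the right end of the viaduct is a translate of a subinterval of $[0,\Phi]$ whose length is the volume of a chamber in the hyperplane arrangement on the circle $\mathbb{R}/\Phi$ cut out by the $n$ rotations. Under the volume measure $\mu([a,b]) = b-a$, the sum over $j$ combines into a cycle in $C_n(T_1;\mathbb{R})$. Summing over $\sigma \in \mathfrak{S}_n$ with signs, the resulting cycle is — after regrouping terms according to which rotation $\Phi_k$ appears as the rightmost factor and reinstating the symmetric piece $\Phi_0 = \Phi - (\Phi_1 + \dots + \Phi_n)$ by using that translation by $\Phi$ is the identity on the circle — exactly the antisymmetrization $\sum_j (-1)^j (\Phi_0 \wedge \dots \wedge \widehat{\Phi}_j \wedge \dots \wedge \Phi_n) \otimes \Phi_j$.

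The main obstacle will be the combinatorial bookkeeping in the previous paragraph: translating the DMC-span data for iterated circle rotations into an explicit cycle, then showing that the cyclic asymmetry between $\Phi_0$ and the other $\Phi_i$'s (introduced by our choice of basis $X = X' \setminus \{\Phi_0\}$ for $\mathbb{Z}^{\oplus X}$) is resolved by the circle relation $\sum_i \Phi_i = \Phi \equiv 0$. This is the same combinatorics that appears in the proof of \cref{prop:regularformula}, run in reverse: there one takes the wedge $\Phi_0 \wedge \dots \wedge \Phi_n$ and snakes it through the short exact sequence to land in $H_n(T_1;\mathbb{R})$; here one starts from the geometric viaduct and must reorganize its image under $R^{\mathcal{G}}$ into the same expression. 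Once this matching is established, the rational equivalence of \cref{cor:Cor_D_u_reg} together with the injectivity of the regulator on $\mathbb{R}^{\wedge(n+1)}$ gives the desired identification of classes in $K_n(\mathcal{E}_T^1)$.
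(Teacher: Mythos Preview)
Your proposal is correct and follows essentially the same approach as the paper: reduce to the regulator via its injectivity, represent each simplex of the torus by a viaduct over the partition of $[0,\Phi]$ into the $n+1$ subintervals of lengths $\Phi_0,\dots,\Phi_n$, apply the explicit formula for $R^{\cal G}$, and then perform the combinatorial simplification using $\Phi_j-\Phi=-\sum_{i\neq j}\Phi_i$ together with the sign cancellation of terms with a repeated $\Phi_{\sigma(k)}$ to recover $\sum_j(-1)^j(\Phi_0\wedge\cdots\wedge\widehat{\Phi}_j\wedge\cdots\wedge\Phi_n)\otimes\Phi_j$. The only refinement the paper makes explicit, which you leave implicit, is the concrete choice of viaduct (the fixed $(n{+}1)$-piece partition with moves $\Xi_{k|j}=\Phi_j$ for $k\neq j$ and $\Xi_{j|j}=\Phi_j-\Phi$), which keeps the bookkeeping manageable.
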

\begin{proof}
    Since the volume regulator $v:K_* (\cal{E}_T^1) \rightarrow H_*(T_1 ; \bb{R})$  is injective, it suffices to show that the regulator applied to $\Phi_0 \sma \dots \sma \Phi_n$ agrees with the homology class represented by $C_X$. By \cref{prop:regularformula}  we have 
    \[ v(\Phi_0 \sma \dots \sma \Phi_n)=\sum_{j=0}^n (-1)^j (\Phi_0 \sma \dots \sma \widehat{\Phi}_j \sma \dots \sma \Phi_n) \otimes \Phi_j.\]
    
    \par We claim that the fundamental class of $C_X$ corresponds to the generator 
    \[ \Phi_0 \sma \dots \sma \Phi_n  \in K_n(\cal{E}_T ^1).\]
     Let 
    \[ x_k= \left [\sum_{j <k} \Phi_j, \sum_{j \leq k}  \Phi_j \right] \]
   so that we have a decomposition of the interval
    \[ [0,\Phi] = x_0 \sqcup \dots \sqcup x_{n+1} \]
    into subintervals $x_i$. Define
    \begin{align*}
        \Xi_{k|j}&= \begin{cases}
            \Phi_j - \Phi & k=j \\
            \Phi_j & k \neq j
        \end{cases}\\
        x_{n|j}&=x_j.
    \end{align*}
    In this notation, the viaduct 
    \[
    \begin{tikzcd}
    \{x_{0|j}  \} \ar[d,dotted ]& \ar[l, tail,"\{\Xi_{1|j}\}"'] \dots & \ar[l, tail, "\{\Xi_{n-1|j}\}"'] \{x_{n-1|j}\} \ar[d, dotted] & \ar[l, tail, "\{\Xi_{n|j}\}"'] \{x_{n|j}\} \ar[d, dotted]\\
   \left[ 0,\Phi\right] &&\left[ 0,\Phi\right]&\left[ 0,\Phi\right]
    \end{tikzcd}
     \]
encodes the flag of automorphisms consisting of clockwise rotations by $\Phi_n, \Phi_{n-1} , \dots$ and ultimately $\Phi_0$. This is one of the $n!$ simplices which make up $C_X$. Applying the regulator induced by volume to this viaduct yields 
\[ [\Phi_1 | \dots | \Phi_n] \otimes \Phi_0 +\sum_{1 \leq j \leq n} [\Phi_1 | \dots | \Phi_j - \Phi | \dots | \Phi_n] \otimes \Phi_j \in C_n (T_1 ; \bb{R} )\]
where $\Phi_j - \Phi$ is in the$j$-th  position. 
\par The other simplices making up $C_X$ are given by permuting the $x_i$ by $\sigma \in  \fS_{n} $. Applying the regulator to all of these simplices we get 
\begin{align*}
    v(C_X)&=\sum_{\sigma \in \fS_n}(-1)^\sigma \left([\Phi_{\sigma(1)} | \dots | \Phi_{\sigma(n)}] \otimes \Phi_0 +\sum_{1 \leq j \leq n} [\Phi_{\sigma(1)} | \dots | \Phi_{\sigma(j)} - \Phi | \dots | \Phi_{\sigma(n)}] \otimes \Phi_{\sigma(j)} \right).\\
\end{align*}
To simplify notation we set $A_{\sigma} = [\Phi_{\sigma(1)} | \dots | \Phi_{\sigma(n)}] \otimes \Phi_0$. 
Note also that $\sum(-1)^\sigma A_{\sigma} $ represents the homology class  $(\Phi_1 \sma \dots \sma \Phi_n) \otimes \Phi_0$. 
We will simplify the sum to show that it represents the homology class $\sum_j (-1)^j (\Phi_0 \sma \dots \sma \widehat{\Phi}_j \sma \dots \sma \Phi_n) \otimes \Phi_j$. As we have already observed, $\sum_\sigma A_{\sigma}$ represents the first term in this sum. Since $\Phi_j - \Phi = \sum_{i \neq j} \Phi_i$ we use multilinearity to obtain the sum
\begin{align*}
   v(C_X)=&\sum_{\sigma \in \fS_n}(-1)^\sigma ( A_{\sigma} + \sum_{1 \leq j \leq n} (-[\Phi_{\sigma(1)}| \dots |\Phi_0| \dots |\Phi_{\sigma(n)}] \otimes \Phi_{\sigma(j)} \\&- 
    \sum_{k \neq j} [\Phi_{\sigma(1)} | \dots |\Phi_{\sigma(k)} | \dots | \Phi_{\sigma(k)}| \dots | \Phi_{\sigma(n)}] \otimes \Phi_{\sigma(j)} )).
\end{align*}
    Now we observe that the 
\[ 
    \sum_{\sigma \in \fS_n } (-1)^\sigma (\sum_{k \neq j} [\Phi_{\sigma(1)} | \dots |\Phi_{\sigma(k)} | \dots | \Phi_{\sigma(k)}| \dots | \Phi_{\sigma(n)}] \otimes \Phi_{\sigma(j)})=0.
 \]
 Note that $\Phi_{\sigma(k)}$ occurs in the $j$-th position and the $k$-th position. Composing $\sigma$ with the transposition $\tau=(ij)$ changes the signature $-(-1)^\sigma = (-1)^{\sigma \tau}$ so that the terms associated with $\sigma$ and $\sigma \tau $ cancel. Therefore the sum simplifies to 
 \begin{align*}
     \sum_{\sigma \in \fS_n} (-1)^\sigma A_{\sigma} + \sum_{1 \leq k \leq n} \sum_{\sigma \in \fS_n} (-1)^{\sigma + 1} [\Phi_{\sigma(1)}| \dots |\Phi_0 | \dots | \Phi_{\sigma(n)}] \otimes \Phi_k .
 \end{align*}
 In the term on the right $\Phi_0$ appears in the $\sigma^{-1}(k)$-th place. Fixing $k$, the term on the right is a chain representing 
 \[ (-1)^{k-1} (\Phi_0 \sma \dots \sma \widehat{\Phi}_k \sma \dots \sma \Phi_n)\otimes \Phi_k.\] 
 Therefore $r(C_X)$ represents 
 \[\sum_{j=0}^n (\Phi_0 \sma \dots \sma \widehat{\Phi}_j \sma \dots \sma \Phi_n) \otimes \Phi_j.\]
  
\end{proof}
This concludes the proof of \cref{Kgen}. 
\begin{example}
    The element $x\sma y \sma z \in K_2 (\cal{E}_T^1)$ is realized by the torus displayed below. 
   
    \[\begin{tikzcd}
        xyz  \ar[d, "\rho_x"', blue]\ar[r, "\rho_{x+y}", purple]& \begin{matrix}zxy\\xzy\end{matrix} \ar[d, "\rho_x", blue] \\
        \begin{matrix}  
            yzx\\yxz
        \end{matrix} \ar[r, "\rho_{x+y}", purple] \ar[ur, "\rho_y", red]& xzy
    \end{tikzcd}\]
    Abusing notation, we use $x,y,z$ for intervals and also the lengths of those intervals. The string $xyz$ corresponds to the intervals arranged in that order. We use $\rho_x$ as shorthand for rotation clockwise by $x$. 
    
\end{example}

\begin{definition}\label{IETring}
    We define a ring structure on the homology of $IET=\aut_{\cal{E}_T^1}[0,1]$. There is a map 
    \[\aut_{\cal{E}_T^1} [0,1] \times \aut_{\cal{E}_T^1}[0,1] \rightarrow \aut_{\cal{E}_T^1}[0,2]  \]
    defined by stacking automorphisms. If $\Phi$ and $\Psi$ are automorphisms on $[0,1]$ then we define an automorphism on $[0,2]$ which applies $\Phi$ on $[0,1]$ and $\Psi$ on $[1,2]$, where this interval is identified with $[0,1]$ by a translation. Recall that by \cite[Theorem 3.17]{klmms24} there is a canonical isomorphism on homology $H_*( \aut_{\cal{E}_T^1}[0,\Phi] )\cong H_* (\aut_{\cal{E}_T^1}[0,\Psi])$ in all degrees. 
    Therefore we have a multiplication on the homology of $IET$ defined by 
    \[ H_*(\aut_{\cal{E}_T^1}[0,1]) \times H_* (\aut_{\cal{E}_T^1}[0,1]) \rightarrow H_* (\aut_{\cal{E}_T^1}[0,2]) \cong H_* (\aut_{\cal{E}_T^1}[0,1])\]
\end{definition}
Having defined multiplication on $H_* (IET)$ we are now able to prove \cref{cor:IETgen}
\begin{corollary}\label{cor:IETgen_proved}
The subcomplexes $C_X$ generate the homology of $H_*(IET)$ as a ring. 
\end{corollary}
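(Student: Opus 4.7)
The plan is to combine \cref{Kgen} with the structural theorem from \cite{klmms24} relating $H_*(IET)$ to $K_*(\cal{E}_T^1)$ as graded rings, thereby showing that every element of $H_*(IET)$ can be written as a polynomial in the classes $[C_X]$.

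First, I would check that the interval-stacking product on $H_*(IET)$ introduced in \cref{IETring} is the restriction of the Pontryagin product on $H_*(\Omega^\infty K(\cal{E}_T^1))$ along the natural map $BIET \hookrightarrow B\cal{G}(\cal{E}_T^1) \to \Omega B^\amalg B\cal{G}(\cal{E}_T^1)$. Indeed, stacking automorphisms of $[0,1]$ to produce an automorphism of $[0,2]$ is precisely the symmetric monoidal sum in $\cal{W}(\cal{E}_T^1)$ applied to singletons, and the translation used in \cref{IETring} to reidentify $\aut[0,2]$ with $\aut[0,1]$ is the canonical homology equivalence of \cite[Thm.~3.17]{klmms24}. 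Once this is verified, the natural map $H_*(IET) \to K_*(\cal{E}_T^1)$ is a ring homomorphism, and by \cref{Kgen} each $[C_X]$ lifts the $K$-theory generator $\Phi_0 \sma \cdots \sma \Phi_n$.

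Next, I would invoke the main result of \cite{klmms24}, specialized to $1$-dimensional translational scissors congruence: the homology $H_*(IET)$ is generated as a ring by (any lifts of) an additive generating set of $K_*(\cal{E}_T^1)$. Since, as $X$ ranges over finite rationally independent subsets of $\bb{R}_{>0}$, the classes $[C_X]$ map onto the additive generating set $\{\Phi_0 \sma \cdots \sma \Phi_n\}$ of $K_*(\cal{E}_T^1) \cong \bb{R}^{\sma_{\bb{Q}} \bullet}$ provided by \cref{Kgen}, they generate $H_*(IET)$ as a ring.

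The main obstacle I anticipate is bookkeeping for the compatibility of ring structures. The naive stacking map $\aut[0,r] \times \aut[0,s] \to \aut[0,r+s]$ does not commute on the nose with the DMC-span realization of the sum in $\cal{G}(\cal{E}_T^1)$; it only agrees after applying the canonical translation isomorphism. One must therefore check that this translation does not introduce unwanted signs or normalization factors at the level of Pontryagin products, so that the ring map $H_*(IET) \to K_*(\cal{E}_T^1)$ really is the one to which the structural theorem of \cite{klmms24} applies. Once this verification is complete, the corollary follows at once from \cref{Kgen}.
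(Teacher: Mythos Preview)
Your proposal is correct and follows essentially the same route as the paper: invoke \cite[Corollary~4.5]{klmms24} to identify $H_*(IET)$ with $H_*(\Omega_0^\infty K(\cal{E}_T^1))$, use that the $K$-groups are rational vector spaces so that this homology is the free graded algebra on $K_*(\cal{E}_T^1)$, and conclude via \cref{Kgen} that the $[C_X]$ generate. Your explicit verification that the stacking product of \cref{IETring} matches the Pontryagin product is a point the paper leaves implicit, so your write-up is in fact slightly more careful on that front.
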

\begin{proof}
    Recall that $H_* (\Omega_0 ^\infty K(\cal{E}_T ^1) )\cong H_* (IET) $ by \cite[Corollary 4.5]{klmms24}. Since the $K$-groups are rational, therefore the homology of $IET$ is the free graded algebra on the $K$-groups of $\cal{E}_T ^1$. We conclude that the generators of the $K$-groups from \cref{complexrecipe} in generate $H_*(IET)$.
\end{proof}
\section{Rectangle exchange transformations and translational scissors congruence}
Using the results of \cite{klmms24} we may extend our result concerning a generating set of $H_*(IET)$ to the case of rectangle exchange groups. We give an outline of how this works. 
\begin{definition}\label{rectangleexchangeassembler}
    \cite[Definition 6.1,6.29]{klmms24}
    There is an assembler which we denote $\cal{R}^n$ where the the objects are non-degenerate polytopes in $\bb{R}^n$ such that every face is orthogonal to a  standard basis vector. Morphisms are inclusions. The group of translations $G=T_n$ acts on this assembler and we call the resulting homotopy orbit assembler  $\cal{R}^n_T=(\cal{R}^n )_{hT_n}$ the \emph{rectangle exchange assembler}. 
\end{definition}

\begin{definition}\label{rectangleexchangegroup}
 We define the rectangle exchange group $Rec_n = \aut_{\mathcal{R}^n _T}([0,1]^n)$ \cite[Section 2]{cornulier2022groupsrectangleexchangetransformations}. 
\end{definition}

 \par By \cite[Theorem 6.16]{klmms24} the $K$-theory spectrum of the rectangle exchange assembler is equivalent to a smash product of the $K$-theory of 1-dimensional translational scissors congruence $K(\cal{R}_T^n) \simeq K(\cal{E}_T^1)^{\sma n}$. The calculation stated in the following proposition is a consequence. 
\begin{proposition}[{\cite[Lemma 6.30, Corollary 6.31]{klmms24}}]\label{summarizedtheoremsRect}
 \begin{align*}
     H_* (Rec_n)&\cong H_*(\Omega_0^\infty K(\cal{R}^n_T)))   \\
     K_*(\cal{R}^n_T) & \cong \bigotimes_{i=1}^n \left(\bigoplus_{m \geq 0} \Lambda_{\bb{Q}}^{m+1} (\bb{R})[m]\right) \\
     H_* (Rec_n) &\cong \Lambda^* \left(\bigotimes_{i=1}^n \left(\bigoplus_{m \geq 0} \Lambda_{\bb{Q}}^{m+1} (\bb{R})[m]\right)\right)
    \end{align*}
\end{proposition}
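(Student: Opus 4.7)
The plan is to assemble the proposition from three ingredients: the smash product decomposition of $K(\cal{R}_T^n)$, the Mal24 computation of $K_*(\cal{E}_T^1)$, and the rational structure of infinite loop space homology.

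First, I would establish the second formula. By \cite[Theorem 6.16]{klmms24} stated above, there is a weak equivalence of spectra $K(\cal{R}_T^n) \simeq K(\cal{E}_T^1)^{\sma n}$. By the Mal24 theorem quoted in the introduction,
\[ K_m(\cal{E}_T^1) \cong H_{m+1}(\bb{R};\bb{Z}) \cong \bb{R}^{\sma_\bb{Q}(m+1)} = \Lambda^{m+1}_\bb{Q}(\bb{R}), \]
so $K_*(\cal{E}_T^1) \cong \bigoplus_{m \geq 0} \Lambda^{m+1}_\bb{Q}(\bb{R})[m]$. In particular every group $K_m(\cal{E}_T^1)$ is a $\bb{Q}$-vector space, so the Künneth spectral sequence for the smash product degenerates (all Tor terms over $\bb{Z}$ vanish since the graded pieces are flat as $\bb{Z}$-modules). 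This produces the isomorphism
\[ K_*(\cal{R}_T^n) \cong \bigotimes_{i=1}^n K_*(\cal{E}_T^1) \cong \bigotimes_{i=1}^n \left(\bigoplus_{m \geq 0} \Lambda_{\bb{Q}}^{m+1}(\bb{R})[m]\right), \]
which is the second line of the proposition.

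Next, I would justify the first line. The argument is parallel to \cite[Corollary 4.5]{klmms24}, which identifies $H_*(IET)$ with $H_*(\Omega_0^\infty K(\cal{E}_T^1))$. The same setup applies here: $Rec_n = \aut_{\cal{R}_T^n}([0,1]^n)$ is the automorphism group of a single object in the scissors congruence groupoid, so the component of $[0,1]^n$ in $B\cal{G}(\cal{R}_T^n)$ is a $K(Rec_n,1)$. A group completion theorem (as used in klmms24) then identifies the homology of the basepoint component of $\Omega B^\sqcup B\cal{G}(\cal{R}_T^n) = \Omega^\infty K(\cal{R}_T^n)$ with $H_*(Rec_n)$.

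Finally, for the third line, I combine the previous two with rational homotopy theory. Because every $K_m(\cal{R}_T^n)$ is a $\bb{Q}$-vector space, the connective spectrum $K(\cal{R}_T^n)$ is rational, so $\Omega_0^\infty K(\cal{R}_T^n)$ is a simply connected rational space (it is even an infinite loop space). A classical result on the rational cohomology of infinite loop spaces (Milnor--Moore, applied to the primitively generated Hopf algebra $H_*(\Omega_0^\infty X;\bb{Q})$ for a rational spectrum $X$) shows that $H_*(\Omega_0^\infty X;\bb{Q})$ is the free graded-commutative algebra on $\pi_*(X) \otimes \bb{Q}$. Applying this with $X = K(\cal{R}_T^n)$ and substituting the formula for $K_*$ from the second line yields the third line.

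The main obstacle is being precise about the identification in the first line: verifying that the component-wise group completion used in \cite[Corollary 4.5]{klmms24} extends from $\cal{E}_T^1$ to the higher-dimensional rectangle exchange setting, i.e.\ that $[0,1]^n$ is suitably ``universal'' inside $\cal{G}(\cal{R}_T^n)$. Once that is in place, the Künneth and Milnor--Moore steps are formal given that everything is rational.
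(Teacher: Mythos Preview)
The paper does not prove this proposition; it is stated as a citation of \cite[Lemma 6.30, Corollary 6.31]{klmms24}, and the text preceding it merely records that $K(\cal{R}_T^n) \simeq K(\cal{E}_T^1)^{\sma n}$ as input. So there is no argument in the paper to compare against, and your sketch is in effect supplying what the cited reference does.

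Your outline is essentially correct and follows the expected route: the K\"unneth step is valid because the $K$-groups of $\cal{E}_T^1$ are $\bb{Q}$-vector spaces, the identification $H_*(Rec_n)\cong H_*(\Omega_0^\infty K(\cal{R}_T^n))$ is the higher-dimensional instance of the same group-completion argument used for $IET$, and the free graded-commutative algebra description of the homology follows from the rationality of the spectrum. One small correction: $\Omega_0^\infty K(\cal{R}_T^n)$ is \emph{not} simply connected in general (already for $n=1$ one has $\pi_1 \cong K_1(\cal{E}_T^1) \cong \Lambda^2_{\bb{Q}}(\bb{R}) \neq 0$). The conclusion is unaffected, since a rational connective spectrum splits as a product of Eilenberg--Mac Lane spectra, so $\Omega_0^\infty$ is a product of $K(V_m,m)$'s with $V_m$ rational, and the homology of such a product is the free graded-commutative algebra on $\bigoplus_{m\geq 1} V_m[m]$; equivalently, Milnor--Moore applies to the connected Hopf algebra $H_*(\Omega_0^\infty X;\bb{Q})$ without any simple-connectivity hypothesis. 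Your flagged ``obstacle'' about $[0,1]^n$ being universal in $\cal{G}(\cal{R}_T^n)$ is exactly the content of \cite[Lemma 6.30]{klmms24}, so citing that lemma discharges it.
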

\begin{corollary}
    Taking Pontryagin products of the generators of $H_*(IET)$ produces a generating set for $H_*(Rec_n)$. 
\end{corollary}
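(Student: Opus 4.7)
The plan is to combine the structural results recalled in \cref{summarizedtheoremsRect} with \cref{Kgen} and \cref{cor:IETgen_proved}. By \cref{summarizedtheoremsRect}, the rational homology $H_*(Rec_n)$ is the free graded-commutative algebra under Pontryagin product on $K_*(\cal{R}^n_T) \cong \bigotimes_{i=1}^n K_*(\cal{E}_T^1)$. It therefore suffices to realize some generating set of the abelian group $K_*(\cal{R}^n_T)$ inside $H_*(Rec_n)$ as Pontryagin products of classes pushed forward from the subcomplexes $C_X$ of \cref{complexrecipe}.

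By the K\"unneth splitting of the equivalence $K(\cal{R}^n_T) \simeq K(\cal{E}_T^1)^{\sma n}$ from \cite[Theorem 6.16]{klmms24}, a generating set of $K_*(\cal{R}^n_T)$ is given by $n$-fold external products of the $K$-theory generators $\Phi_0 \sma \dots \sma \Phi_m$ of \cref{Kgen}. Geometrically, the $n$ coordinate embeddings $\cal{E}_T^1 \hookrightarrow \cal{R}^n_T$ (inserting a one-dimensional translational configuration along the $i$-th axis and keeping a fixed reference interval in the other directions) induce $n$ subgroups of $Rec_n$ isomorphic to $IET$ which commute elementwise, because they act nontrivially on disjoint coordinates of the cube.

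Since these subgroups commute elementwise, the Pontryagin product in $H_*(Rec_n)$ of classes arising from distinct coordinate inclusions factors through the Eilenberg--Zilber cross product, which under the Hurewicz map matches the external smash pairing of infinite loop spaces $\Omega_0^\infty K(\cal{E}_T^1)^{\times n} \to \Omega_0^\infty K(\cal{R}^n_T)$ associated to the spectrum-level equivalence. Consequently the Pontryagin product in $H_*(Rec_n)$ of pushed $C_X$-classes, one from each of the $n$ axes, realizes the external product generators of $K_*(\cal{R}^n_T)$. Forming further Pontryagin products within each axis (which by \cref{cor:IETgen_proved} produces all of $H_*(IET)$ from the $C_X$'s) then yields a generating set for the free graded-commutative algebra $H_*(Rec_n)$.

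The main obstacle is verifying the compatibility claim in the preceding paragraph: that smash product on $K$-theory intertwines with Pontryagin product on $H_*$ of the associated infinite loop spaces. This is a general feature of the infinite loop space functor, since a pairing of connective spectra $X \sma Y \to Z$ yields a map $\Omega_0^\infty X \times \Omega_0^\infty Y \to \Omega_0^\infty Z$ of $H$-spaces which on $H_*$ sends the cross product of two Hurewicz images in $\pi_*$ to the Pontryagin product of their images. Rationally, one recovers from this exactly the free graded-commutative algebra on $\pi_*$, which is the identification used in \cref{summarizedtheoremsRect}.
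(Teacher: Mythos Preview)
Your argument is correct and follows essentially the same route as the paper: both rely on the componentwise inclusion $IET^{\times n}\hookrightarrow Rec_n$, together with \cref{summarizedtheoremsRect} and \cref{Kgen}/\cref{complexrecipe}, to see that external products of the $C_X$-classes hit a generating set of $K_*(\cal{R}^n_T)\cong K_*(\cal{E}_T^1)^{\otimes n}$ and hence of the free graded-commutative algebra $H_*(Rec_n)$. The only difference is that you spell out the compatibility between the spectrum-level smash pairing and the Pontryagin product on infinite loop spaces, which the paper's proof leaves implicit.
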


\begin{proof}
  
   Note that $IET^{\times n}$ acts component wise on $[0,1]^n$ by scissors automorphisms. This defines a group homomorphism $IET^n \rightarrow Rec_n$. Therefore we have a map $B(IET)^{\times n} \rightarrow B(Rec_n)$. By \cref{summarizedtheoremsRect} and \cref{complexrecipe} the products of generators of $IET$ generate $Rec_n$ along this map. 
\end{proof}

	% bib
	\bibliographystyle{amsalpha}
	\bibliography{Regulator}
	
\end{document}